\newtheorem{thm}{Theorem}
\newtheorem{prop}[thm]{Proposition}
\newtheorem{lem}[thm]{Lemma}
\newtheorem{cor}[thm]{Corollary}
\theoremstyle{remark}
\theoremstyle{definition}
\newtheorem{defi}[thm]{Definition}
\newcommand{\col}{\kern -3pt :}
\newcommand{\C}{\mathbb C}
\newcommand{\R}{\mathbb R}
\newcommand{\Z}{\mathbb Z}
\newcommand{\HH}{\mathbb H}
\newcommand{\PP}{\mathbb P}
\newcommand{\Id}{\mathrm{Id}}
\newcommand{\End}{\mathrm{End}}
\newcommand{\Isom}{\mathrm{Isom}^+}
\newcommand{\Diff}{\mathrm{Diff}}
\newcommand{\T}{\mathcal T}
\newcommand{\U}{\mathcal U}
\newcommand{\K}{\mathcal K}
\newcommand{\PSL}{\mathrm{PSL}}
\newcommand{\E}{\mathrm{e}}
\newcommand{\I}{\mathrm{i}}
\renewcommand{\geq}{\geqslant}
\renewcommand{\phi}{\varphi}
\title[Representations
of the quantum Teichm\"uller space]
{Local representations\\
of the quantum Teichm\"uller space}
\author{Hua Bai}
\address{Department
of Mathematics, University of Georgia, Athens GA~, U.S.A.}
\email{huabai@math.uga.edu}
\author{Francis Bonahon}
\address {Department
of Mathematics,  University of
Southern California, Los Angeles
CA~90089-2532, U.S.A.}
\email{fbonahon@math.usc.edu}
\author{Xiaobo Liu}
\address{Department
of Mathematics, Columbia University, New York NY~, U.S.A.}
\email{xiaoboli@math.columbia.edu}
\date{\today}
\begin{document}

\begin{abstract}
We introduce a certain type of representations for the quantum Teichm\"uller space of a punctured surface, which we call local representations. We show that, up to finitely many choices, these purely algebraic representations are classified by classical geometric data. We also investigate the family of intertwining operators associated to such a representations. In particular, we use these intertwiners to construct a natural fiber bundle over the Teichm\"uller space and its quotient under the action of the mapping class group. This construction also offers a convenient framework to exhibit invariants of surface diffeomorphisms. 

\end{abstract}

\maketitle

The classical Teichm\"uller space $\T(S)$ of a surface $S$ is the space of complex structures on the surface and, as such, is ubiquitous in mathematics. When $S$ has negative Euler characteristic, the Uniformization Theorem also identifies $\T(S)$ to the space of complete hyperbolic metrics on $S$, which embeds $\T(S)$ in the space of group homomorphisms from the fundamental group $\pi_1(S)$ to the isometry group $\Isom(\HH^2) = \PSL_2(\R)$. This gives the Teichm\"uller space a more algebraic flavor.

For a punctured surface $S$, the quantum Teichm\"uller space $\T_S^q$ is a non-commutative deformation of the algebra of rational functions on the classical Teichm\"uller space $\T(S)$, depending on a parameter $q\in \C^*$. This is a purely algebraic object, completely determined by  the combinatorics of the Harer-Penner complex of all ideal cell decompositions of the surface. It was originally introduced by V.~Fock and L.~Chekhov \cite {Fok, CF} and, independently, by R.~Kashaev \cite{Kash1}. We are here using a variation of their original approach, the exponential version of the quantum Teichm\"uller space, which is better suited to mathematical applications; see \cite{BonLiu, Liu1}.
The main advantage of this exponential version is that, when $q$ is a root of unity, it admits an interesting finite dimensional representation theory. Irreducible (and suitably defined) representations of the quantum Teichm\"uller space were investigated in \cite{BonLiu}.

Irreducible representations clearly were the natural object to look at in a first analysis. In this paper, we introduce and investigate a different type of representations, called local representations. These are at first less natural, but they have a few advantages over irreducible representations. One of them is that they are simpler to analyze. However, their main interest is that they are related to other mathematical objects, such as Kashaev's $6j$--symbols \cite{Kash1} and the invariants of knots and 3--dimensional manifolds subsequently developed by Kashaev \cite{Kash2, Kash3}, Baseilhac and Benedetti \cite{BasBen02, BasBen03, BasBen05, BasBen06}. See \cite{Bai2, Bon10} for these connections.

Altogether, the theory of local representations  of the quantum Teichm\"uller space closely shadows  the analysis of irreducible representations in \cite{BonLiu}. For instance, our main result is the following theorem, proved as Theorem~\ref{thm:LocalRepGroupHom}
 in \S\ref{sect:Intertwiners}, which states that a local representation is classified by certain classical geometric data, plus a choice of some root of unity. Comparing this statement with the very similar Theorems~2 and 3 of \cite{BonLiu}, the reader will notice that the assertion is here somewhat simpler to state, and that its proof is also simpler. In particular, it does not use the fine analysis of the algebraic structure of the Chekhov-Fock algebra of \cite[\S\S2--3]{BonLiu}

\begin{thm}
\label{thm:LocalRepGroupHom}
Let $S$ be a punctured surface obtained by removing $p\geq 1$ points from a closed oriented surface $\bar S$. 
Let $q^2$ be a primitive $N$--th root of $q^N=(-1)^{N+1}$ (for instance $q =- \E^{\I \pi/N}$). Then, a local representation of the quantum Teichm\"uller space of  $S$ is classified by: 

\begin{enumerate}

\item a group homomorphism $r\col \pi_1(S) \to \Isom (\HH^3)$ from the fundamental group of $S$ to the isometry group of the $3$--dimensional hyperbolic space $\HH^3$; 
\item for each peripheral subgroup $\pi$ of the fundamental group $\pi_1(S)$ (corresponding to one of the punctures), the choice of a point $\xi_\pi$ of the sphere at infinity $\partial _\infty \HH^3$ that is fixed by $r(\pi)$;

\item the choice of an $N$--th root  $h\in \C^*$ for a certain number $h_r$ associated to $r$ and to the points $\xi_\pi$. 
\end{enumerate}
\end{thm}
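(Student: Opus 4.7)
The plan is to fix an ideal triangulation $\lambda$ of $S$ and work in the Chekhov--Fock algebra $\T_\lambda^q$, which provides one coordinate chart of $\T_S^q$. The definition of a local representation should make the underlying space $V$ decompose as a tensor product $V = \bigotimes_T V_T$ over the triangles $T$ of $\lambda$, in such a way that each generator $X_e$ acts essentially on the factors indexed by the one or two triangles adjacent to the edge $e$. The strategy, shadowing the treatment of irreducible representations in \cite{BonLiu}, is to first classify the local data at a single triangle, then glue the data along edges, and finally identify the resulting global object with the triple $(r,(\xi_\pi),h)$.

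The first step is to analyze the representation attached to a single triangle $T$. The subalgebra generated by the three edge variables $X_{e_1},X_{e_2},X_{e_3}$ of $T$ is a cyclic algebra; under the root of unity hypothesis $q^{2N}=(-1)^{N+1}$, its $N$-th powers are central and the algebra modulo its center is $M_N(\C)$. The restriction of the local representation to this triangle subalgebra should therefore be irreducible on $V_T$, and determined up to isomorphism by the three eigenvalues $\alpha_{T,e_i}\in \C^*$ of the central elements $X_{e_i}^N$. Conversely, any choice of such eigenvalues is realized by a unique representation of the triangle algebra.

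The second step assembles the triangle data into edge data. For each interior edge $e$ shared by triangles $T,T'$, the tensor product structure together with the skew-commutation relations of $\T_\lambda^q$ along $e$ forces $\alpha_{T,e}$ and $\alpha_{T',e}$ to coincide, producing a single assignment $x\col\lambda^{(1)}\to\C^*$. One then invokes the classical (complexified) Fock--Thurston construction: viewing $x_e$ as the exponential of a complex shear coordinate, one glues ideal hyperbolic triangles in $\HH^3$ along the edges of $\lambda$ with the prescribed shear, and the resulting developing map furnishes the holonomy $r\col\pi_1(S)\to\Isom(\HH^3)=\PSL_2(\C)$ and, for each puncture, a point $\xi_\pi\in\partial_\infty\HH^3$ fixed by the peripheral subgroup. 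This produces data (1) and (2). To obtain (3), one isolates a central element $H\in\T_\lambda^q$, built as a suitable product of $X_e^N$'s over a cycle that captures the global ambiguity of the reconstruction; its eigenvalue $h\in\C^*$ on the local representation satisfies $h^N=h_r$, where $h_r$ is the invariant computed from $r$ and the chosen $\xi_\pi$.

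Finally one shows the correspondence is inverse to the natural construction and independent of $\lambda$; the latter uses the intertwiners for flips developed earlier in the paper, which are readily checked to preserve the triple $(r,(\xi_\pi),h)$. The main obstacle is the first step: establishing that a local representation is automatically irreducible on each $V_T$ and is pinned down by the central eigenvalues. This is where the precise form of the root of unity condition $q^{2N}=(-1)^{N+1}$ enters, in order to identify the triangle algebra with $M_N(\C)$ over its center with the correct signs. Once this foundation is in place, Steps~2 and~3 reduce to classical complex shear-coordinate geometry, and the extraction of the discrete $N$-th root $h$ is essentially a bookkeeping exercise.
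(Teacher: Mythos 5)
The overall architecture of your plan matches the paper's: classify triangle representations, assemble edge data, identify the edge weights with shear--bend coordinates of a pleated surface, and extract a central load $h$. However, there are two concrete errors and one crucial omission.

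First, in your second step you claim that the skew-commutation relations along an interior edge $e$ shared by triangles $T$ and $T'$ force the two eigenvalues $\alpha_{T,e}$ and $\alpha_{T',e}$ to coincide. This is false, and misrepresents the structure of a local representation. The embedding $f\col\T_\lambda^q\hookrightarrow\bigotimes_j\T_{T_j}^q$ sends the edge generator to the \emph{product} $X_{ji}\otimes X_{ki}$, so the edge weight is $x_e=\alpha_{T,e}\cdot\alpha_{T',e}$, and the defining equivalence relation on local representations explicitly allows one to rescale $\alpha_{T,e}\mapsto a\,\alpha_{T,e}$ and $\alpha_{T',e}\mapsto a^{-1}\alpha_{T',e}$. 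There is no coincidence; the invariant attached to the edge is the product, not either factor.

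Second, the central element $H$ you invoke cannot be "a suitable product of $X_e^N$'s over a cycle": such an element would already have $h_r$ itself as eigenvalue, not an $N$-th root of it, and would not encode the extra choice in item (3). The element actually used is the principal central element $H=q^{-\sum_{i<j}\sigma_{ij}}X_1X_2\cdots X_n$, a Weyl-ordered product of the $X_e$ (first powers) over \emph{all} edges; the identity $h^N=x_1\cdots x_n=h_r$ comes from $H^N$ mapping to the product of the $N$-th powers.

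Finally, the genuinely essential technical step is missing. Since the object being classified is a representation of the whole quantum Teichm\"uller space, not of a single $\T_\lambda^q$, one must show that if $\rho_\lambda\circ\Phi_{\lambda\lambda'}^q\cong\rho_{\lambda'}$, then the edge-weight vectors transform by the \emph{classical} (semi-classical, $q=1$) coordinate change: $(x_1',\dots,x_n')=\phi_{\lambda\lambda'}(x_1,\dots,x_n)$. This is precisely where the quantum binomial formula and the hypotheses that $q^2$ is a primitive $N$-th root of unity and $q^N=(-1)^{N+1}$ are used, e.g.\ to compute $(\rho_\lambda(X_2)+q\rho_\lambda(X_1)\rho_\lambda(X_2))^N=(1+x_1)x_2\,\mathrm{Id}$. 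Without this, there is no bridge between the algebraic edge weights and the shear--bend coordinates, which transform by the same $\phi_{\lambda\lambda'}$ on the geometric side. Saying that "the intertwiners preserve the triple" puts the cart before the horse; what must be verified is that the $N$-th powers of the quantum coordinate change reduce to the classical ones. You should also note the peripherally generic hypothesis: not every enhanced homomorphism realizes every ideal triangulation, and the converse direction of the classification must account for this.
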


Not every group homomorphism $r\col \pi_1(S) \to \Isom (\HH^3)$, enhanced by the data of the fixed points $\xi_\pi\in \partial _\infty \HH^3$, is associated in this way to a local representation of the quantum Teichm\"uller psace. One needs in addition that this data be well-behaved with respect to ideal triangulations of $S$. However, this condition is automatically satisfied if $r$ is injective, which occurs in most cases of interest. See \S\ref{sect:Pleated} for precise statements.

A local representation of the quantum Teichm\"uller space is actually a whole family of algebra homomorphisms $\rho_\lambda\col \T_\lambda^q \to \End (V_\lambda)$, where $\lambda$ ranges over all ideal triangulations of the surface, where the \emph{Chekhov-Fock algebra} $\T_\lambda^q$ is a certain algebra associated to the combinatorics of $\lambda$, and where $\End(V_\lambda)$ is the algebra of endomorphisms of a finite-dimensional vector space $V_\lambda$. Up to a few technicalities, the quantum Teichm\"uller space is in fact the family of such algebras $\T_\lambda^q$ where, to make the construction independent of a choice of ideal triangulation, we identify any two Chekhov-Fock algebras $\T_\lambda^q$ and $\T_{\lambda'}^q$ by a certain \emph{Chekhov-Fock coordinate change isomorphism} $\Phi_{\lambda\lambda'}^q\col \T_{\lambda'}^q \to \T_\lambda^q$. The homomorphisms $\rho_\lambda$ are then required to be compatible in the sense that, for any two ideal triangulations $\lambda$, $\lambda'$, the algebra homomorphism $\rho_\lambda\circ \Phi_{\lambda\lambda'}^q$ is isomorphic to $\rho_{\lambda'}$ by an isomorphism $L^\rho_{\lambda\lambda'}\col V_{\lambda'}\to V_\lambda$. 

These isomorphisms $L^\rho_{\lambda\lambda'}$, called \emph{intertwining operators}, play a fundamental role for applications. In Theorem~\ref{thm:IntertwiningUnique} of \S\ref{sect:Intertwiners}, we show that they are uniquely determined up to scalar multiplication, provided that we impose on them two naturality conditions involving all possible ideal triangulations (the Composition Relation) and all possible surfaces (the Fusion Relation). 

In \S\ref{sect:Kashaev}, we investigate a construction which was suggested to us by Rinat Kashaev. We use the intertwining operators as transition functions to construct a fiber bundle, with fiber a projective space, over the classical Teichm\"uller space or, more generally, over the space of quasifuchsian hyperbolic metrics on the product of $S\times \R$ and its closure. The construction is invariant under the action of the mapping class group, so that it descends to a fiber bundle over the moduli space. It also provides a natural framework to construct invariants of pseudo-Anosov diffeomorphisms of the surface $S$, similar to those defined in~\cite{BonLiu}. 

We should also mention that some of the ideas of this paper, including the fusion construction introduced in \S\ref{subsect:Fusing}, can be used to simplify some of the arguments of \cite{BonLiu}. 

\medskip
\noindent\textbf{Acknowledgement:} It is a pleasure to thank Rinat Kashaev for valuable conversations, including the idea that we exploited in \S\ref{sect:Kashaev}.

\section{The Chekhov-Fock algebra}
\label{sect:CheFock}

Although most of the applications will only involve punctured surfaces without boundary, it is convenient to include surfaces with boundary in the discussion. See the Fusion Relation of \S\ref{sect:QTS} and \S\ref{sect:Intertwiners}.

Let $S$ be an oriented  punctured surface with
boundary, obtained by removing finitely many points $v_1$,
$v_2$,
\dots, $v_p$ from a compact connected oriented surface
$\bar S$ of genus $g$ with (possibly empty) boundary. We require that each
component of $\partial \bar S$ contains at least one puncture $v_i$, that
there is at least one puncture, and that the Euler characteristic $\chi(S)$ of $S$ is negative. These topological restrictions, which exclude only a small number of  surfaces (assuming that there is at least one puncture), are
equivalent to the existence of an \emph{ideal triangulation} for $S$,
namely a triangulation of the closed surface
$\bar S$ whose vertex set is exactly
$\{ v_1,
\dots, v_p\}$. In particular, an ideal triangulation
$\lambda$  has $n=-3\chi(S)+2s$ edges and $m=-2\chi(S)+s$ faces. Its edges
provide $n$ arcs $\lambda_1$, \dots, $\lambda_n$ in $S$, going from
puncture to puncture, which decompose the surface $S$ into  $-2\chi(S)+s$
infinite triangles whose vertices sit `at infinity' at the punctures.
Note that $s$ of these $\lambda_i$ are just the boundary components
of $S$. 

We will occasionally need to consider disconnected surfaces,
in which case the above conditions will apply to each
component of these surfaces. 

Fix a non-zero complex number $q = \mathrm e^{\pi\mathrm i \hbar} \in \C^*$. The \emph{Chekhov-Fock algebra} associated to the ideal triangulation
$\lambda$ is the algebra $\T_\lambda^q$ defined by generators
$X_i^{\pm1}$ associated to the edges $\lambda_i$ of $\lambda$, and by
relations
$X_iX_j= q^{2\sigma{ij}}X_jX_i$ where the integers $\sigma_{ij}\in \{ 0, \pm1, \pm2\}$ are
defined as follows: if $a_{ij}$ is the number of angular sectors  delimited  by $\lambda_i$ and  $\lambda_j$ in the
faces of
$\lambda$, and with $\lambda_i$
coming first counterclockwise, then $\sigma_{ij} = a_{ij}-a_{ji}$. 

In practice, the elements of the Chekhov-Fock algebra $\mathcal
T_\lambda^q$ are Laurent polynomials in the variables $X_i$,
and are manipulated in the usual way except that their multiplication
uses the skew-commutativity relations $X_iX_j= q^{2\sigma{ij}}X_jX_i$. We
will sometimes write $\T_\lambda^q= \mathbb C [ X_1^{\pm1},\dots,
X_n^{\pm1}]_\lambda^q$ to reflect this fact. 

The product $X_1 X_2\dots X_n$ of all the generators is easily seen to be
central in $\T_\lambda^q$. However, we will prefer the
\emph{principal central element}
$$
H= q^{-\sum_{i<j} \sigma_{ij}} X_1 X_2\dots X_n
$$
which has better invariance properties. Indeed, the exponent of the $q$--factor is
specially designed to make $H$ independent of the indexing of the $X_i$.
This symmetrization is classical in physics, where it is known as the Weyl
quantum ordering. 

The center of 
$\T_\lambda^q$ is generated by $H$ and  by certain
elements associated to the punctures of $S$ plus, when
$q^2$ is a primitive $N$--root of unity, the powers
$X_i^N$ and in addition, when $N$ is even, a few additional generators. See for instance
\cite[\S3]{BonLiu}.

\section{The triangle algebra} 
\label{sect:triangle}

Consider the case where $S=T$ is a triangle with its vertices deleted, namely where $\bar S$ is a closed disk and there are 3
punctures $v_1$, $v_2$, $v_3$ on its boundary. In this case,
$\lambda$ has one face and three edges, all in the boundary of $\bar
S$. Index  the edges of
$\lambda$ so that $\lambda_1$, $\lambda_2$, $\lambda_3$ occur
clockwise in this order around $\partial\bar S$. Then the
Chekhov-Fock algebra is the \emph{triangle algebra} defined by
the generators
$X_1^{\pm1}$, $X_2^{\pm1}$, $X_3^{\pm1}$ and by the relations $X_iX_{i+1}
= q^2 X_{i+1}X_i$ for every $i$ (considering indices modulo 3). Note that
the principal central element is $H=q^{-1}X_1X_2X_3$. 

\subsection{Representations of the triangle
algebra} 
\label{sect:trianglerep}

We  consider
representations of the triangle algebra $\mathcal
T_T^q$, namely algebra homomorphisms $\rho\col
\T_T^q \to
\End(V)$ valued in the algebra of linear
endomorphisms of a complex vector space $V$. If we want
$V$ to be finite-dimensional, it is not hard to
see that we need $q^2$ to be a root of unity, namely that
$q^{2N}=1$ for some integer $N>0$. This implies that $q^N =
\pm 1$, but our analysis works best when $q^N = (-1)^{N+1}$. 

Consequently, we will henceforth assume that $q^N =
(-1)^{N+1}$ and that $N>0$ is minimum for this property. In
other words, we require that $-q$ be a primitive $N$--th
root of $-1$.

\begin{lem}
\label{lem:TriangleRep}

 Given an irreducible representation $\rho \col \T_T \to \End(V)$ of the triangle algebra, there exists non-zero complex numbers $x_1$, $x_2$, $x_3$, $h\in \C^*$ such that $\rho$ sends each power $X_i^N$ to $x_i \Id_V$, and sends $H$ to $ h \Id_V$. Two irreducible representations of the triangle algebra are isomorphic if and only if they are isomorphic if and only if they define the same numbers $x_1$, $x_2$, $x_3$, $h\in \C^*$ as above.

In addition, $h^N = x_1x_2x_3$, and any set of numbers $x_1$, $x_2$, $x_3$, $h\in \C^*$ with $h^N = x_1x_2x_3$ can be realized in this way by an irreducible representation of the triangle algebra.
\end{lem}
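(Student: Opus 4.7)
The plan is to first show that the stated scalars arise automatically from Schur's lemma, then establish the algebraic identity $h^N=x_1x_2x_3$ by reducing $H^N$ to $X_1^NX_2^NX_3^N$, and finally establish existence and uniqueness through an explicit model on $\C^N$.

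For the first step, I would observe that each $X_i^N$ is central: the relation $X_iX_j=q^{2\sigma_{ij}}X_jX_i$ yields $X_i^NX_j=q^{2N\sigma_{ij}}X_jX_i^N=X_jX_i^N$ since $q^{2N}=1$. Combined with the centrality of $H=q^{-1}X_1X_2X_3$ already noted in the text, Schur's lemma applied to the finite-dimensional irreducible $\rho$ produces scalars $x_i,h\in\C^*$ (non-zero because each $X_i$ is invertible in $\T_T^q$) with $\rho(X_i^N)=x_i\,\Id_V$ and $\rho(H)=h\,\Id_V$. To obtain $h^N=x_1x_2x_3$, I would compute $(X_1X_2X_3)^N$ by repeatedly using $X_iX_{i+1}=q^2X_{i+1}X_i$ to shuffle all the $X_1$'s to the left and all the $X_3$'s to the right; an easy induction yields $(X_1X_2X_3)^N=q^{-N(N-1)}X_1^NX_2^NX_3^N$, so $H^N=q^{-N^2}X_1^NX_2^NX_3^N$. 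The key numerical check is $q^{N^2}=(q^N)^N=\bigl((-1)^{N+1}\bigr)^N=1$, valid for both parities of $N$; applying $\rho$ to the identity $H^N=X_1^NX_2^NX_3^N$ then gives $h^N=x_1x_2x_3$.

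For existence, given $(x_1,x_2,x_3,h)\in(\C^*)^4$ with $h^N=x_1x_2x_3$, I would build $\rho$ on $V=\C^N$ with basis $e_0,\dots,e_{N-1}$ by choosing $N$-th roots $a,b$ of $x_1,x_2$, setting $\rho(X_1)e_k=a q^{2k}e_k$ and $\rho(X_2)e_k=b\,e_{k+1}$ (indices mod $N$), and defining $\rho(X_3)=\alpha\,\rho(X_1)^{-1}\rho(X_2)^{-1}$ with $\alpha=h/q$. A direct check confirms all three skew-commutation relations and gives $\rho(H)=h\,\Id_V$; using $(X_1^{-1}X_2^{-1})^N=q^{-N(N-1)}X_1^{-N}X_2^{-N}$ one verifies $\rho(X_3)^N=x_3\,\Id_V$ via the identity $h^N=x_1x_2x_3$. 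Irreducibility is immediate: since $q^2$ is a primitive $N$-th root of unity, $\rho(X_1)$ has $N$ distinct eigenvalues with 1-dimensional eigenspaces $\C e_k$, and $\rho(X_2)$ cyclically permutes these lines, so any nonzero invariant subspace contains every $e_k$.

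For uniqueness, suppose $\rho$ and $\rho'$ give the same quadruple $(x_1,x_2,x_3,h)$. In either representation, $\rho(X_1)^N=x_1\Id$ forces $\rho(X_1)$ to be diagonalizable with eigenvalues among the $N$-th roots of $x_1$, and the relation $\rho(X_1)\rho(X_2)=q^2\rho(X_2)\rho(X_1)$ shows that $\rho(X_2)$ sends the $\mu$-eigenspace of $\rho(X_1)$ to the $q^2\mu$-eigenspace. Since $q^2$ generates a cyclic group of order $N$ acting freely on the roots of $x_1$, the eigenspaces are permuted in a single $N$-cycle, so irreducibility forces each to be $1$-dimensional and $\dim V=N$. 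Picking an eigenvector in one of them and applying $\rho(X_2)$ repeatedly yields a basis in which $\rho(X_1)$ and $\rho(X_2)$ take precisely the explicit form of the previous paragraph (up to rescaling the chosen $N$-th roots of $x_1$, $x_2$, which is an inner isomorphism); then $\rho(H)=h\,\Id$ determines $\rho(X_3)=(h/q)\rho(X_1)^{-1}\rho(X_2)^{-1}$ uniquely, so $\rho\cong\rho'$. The main obstacle is really just the bookkeeping of $q$-exponents, particularly the key identity $q^{N^2}=1$ which crucially uses the hypothesis $q^N=(-1)^{N+1}$.
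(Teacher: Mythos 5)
Your proof is correct and follows essentially the same route as the paper: the paper's one-line sketch diagonalizes $\rho(X_1)$ and records the resulting normal form $\rho(X_1)e_i=y_1q^{2i}e_i$, $\rho(X_2)e_i=y_2e_{i+1}$, $\rho(X_3)e_i=y_3q^{1-2i}e_{i-1}$, from which everything follows. You do the same thing, only spelling out the Schur's lemma step, the $q$-exponent bookkeeping in $(X_1X_2X_3)^N=q^{-N(N-1)}X_1^NX_2^NX_3^N$, the crucial check $q^{N^2}=\bigl((-1)^{N+1}\bigr)^N=1$, and the irreducibility and uniqueness arguments that the paper leaves implicit.
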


\begin{proof} The proof is completely elementary,
by analyzing the action of the $\rho(X_i)$ on
the eigenvectors of  $\rho(X_1)$. Indeed, one easily finds a basis $e_0$, $e_1$, \dots, $e_{N-1}$ for $V$ and numbers $y_1$, $y_2$, $y_3\in \C^*$ such that $\rho(X_1)=y_1q^{2i}e_i$, $\rho(X_2)(e_i) = y_2 e_{i+1}$ and $\rho(X_3) (e_i) = y_3 q^{1-2i}e_{i-1}$. The rest of the properties easily follow from this fact. 
\end{proof}

In particular, an irreducible representation  $\rho\col \T_T ^q\to
\End(V)$  is classified by  three complex numbers
$x_1$, $x_2$, $x_3$ associated to the sides of the triangle, and by
an
$N$--root
$h=\sqrt[N]{x_1x_2x_3}$ for their product. We will call $h$  the
\emph{central load} of the representation $\rho$.

\section{Local representations of the Chekhov-Fock
algebra}
\label{sect:LocalRep}

\subsection{Embedding the Chekhov-Fock algebra in a
tensor product}
\label{sect:EmbedCheFock}

 Let $\lambda$ be an ideal
triangulation of the punctured surface $S$ with
faces $T_1$,
\dots,
$T_m$ where $m=-2\chi(S)+s$. Each face
$T_j$ determines a triangle algebra $\T_{T_j}^q$, with generators
associated to the three sides of $T_j$. 

The Chekhov-Fock algebra $\T_\lambda^q$ has a natural embedding  into the tensor product algebra $\bigotimes_{j=1}^m \T_{T_j}^q = \T_{T_1}^q \otimes \dots \otimes \T_{T_m}^q$
defined as follows. If the generator $X_i$ of $\T_\lambda^q$ is
associated to the $i$--th edge $\lambda_i$ of $\lambda$, define
\begin{itemize}
\item $f(X_i) = X_{ji} \otimes X_{ki}$ if $\lambda_i$ separates two
distinct faces
$T_j$ and
$T_k$, and if
$X_{ji}\in \T_{T_{j}}^q$ and $X_{ki}\in \mathcal
T_{T_{k}}^q$ are the generators associated to the sides of
$T_j$ and $T_k$ corresponding to
$\lambda_i$;
\item $f(X_i) = q^{-1} X_{ji_1} X_{ji_2} =q X_{ji_2} X_{ji_1}$ if $\lambda_i$
corresponds to two sides of the same face $T_j$, if $X_{ji_1}$,
$X_{ji_2}\in\T_{T_j}^q$ are the generators associated to these two
sides, and if $X_{ji_1}$ is associated to the side that comes first when going
counterclockwise around their common vertex. 
\end{itemize}
By convention, when describing an element $Z_1 \otimes  \dots \otimes Z_m$ of $\T_{T_1}^q \otimes  \dots \otimes \T_{T_m}^q$, we omit in the tensor product those $Z_j$ that are equal to the identity element $1$ of $\T_{T_j}^q$.

\begin{lem}
There exists a (unique) injective algebra homomorphism 
$$f\col \T_\lambda^q \to
\T_{T_1}^q \otimes  \dots \otimes \T_{T_m}^q$$
 such that, for every generator $X_i$ of $\T_\lambda^q$, $f(X_i) $ is the element of $\bigotimes_{j=1}^m \T_{T_j}^q$ defined above.
\end{lem}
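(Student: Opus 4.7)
The plan is to establish uniqueness, then existence via a local-to-global decomposition, and finally injectivity via a monomial basis argument.

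Uniqueness is immediate: since $\T_\lambda^q$ is generated as an algebra by $X_1^{\pm 1}, \ldots, X_n^{\pm 1}$, any algebra homomorphism extending the specified assignment on generators is unique. For existence, each $f(X_i)$ is visibly invertible in $\bigotimes_j \T_{T_j}^q$, so one only needs to verify that $f$ preserves the defining relations $X_iX_j = q^{2\sigma_{ij}}X_jX_i$.

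I would decompose this verification face by face. For each face $T_k$, set $\sigma_{ij}^{(k)} = a_{ij}^{(k)} - a_{ji}^{(k)}$, where $a_{ij}^{(k)}$ counts the angular sectors of $T_k$ in which $\lambda_i$ comes first counterclockwise. Since every angular sector lies in a unique face, $\sigma_{ij} = \sum_k \sigma_{ij}^{(k)}$. Because elements of different tensor factors commute, the global commutation relation reduces to checking, separately for each $k$, that the $T_k$-components $M_i^{(k)}, M_j^{(k)}\in \T_{T_k}^q$ of $f(X_i)$ and $f(X_j)$ satisfy $M_i^{(k)} M_j^{(k)} = q^{2\sigma_{ij}^{(k)}} M_j^{(k)} M_i^{(k)}$. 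Here $M_i^{(k)}$ is $1$, a single generator $X_{k,s}$, or the normalized product $q^{-1}X_{k,i_1}X_{k,i_2}$, depending on how many sides of $T_k$ come from $\lambda_i$. The main technical obstacle is the last case, where $\lambda_i$ is self-folded in $T_k$: the Weyl normalization $q^{-1}$ in the definition of $f$ is chosen precisely so that this case works, and a case analysis using the triangle relations $X_aX_{a+1}=q^2 X_{a+1}X_a$ together with an explicit count of angular sectors in a self-folded triangle will complete the verification.

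For injectivity, I would use the monomial basis: $\T_\lambda^q$ has a basis of suitably normalized products $X_1^{a_1}\cdots X_n^{a_n}$ indexed by $\mathbf a\in \Z^n$, and $\bigotimes_j \T_{T_j}^q$ has an analogous basis indexed by $\Z^{3m}$. The homomorphism $f$ sends each basis monomial of $\T_\lambda^q$ to a scalar multiple of a basis monomial of the tensor product, via a linear map $\phi\col \Z^n \to \Z^{3m}$. The column of $\phi$ indexed by $\lambda_i$ is supported exactly on the coordinates indexing the one or two triangle-sides coming from $\lambda_i$; since each such side belongs to a unique edge of $\lambda$, distinct columns have disjoint supports, so $\phi$ is injective. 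Hence $f$ sends distinct basis monomials to distinct (nonzero) basis monomials up to scalars, which gives injectivity.
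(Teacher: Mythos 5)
Your proposal is correct and follows essentially the same route as the paper's (very brief) proof: the homomorphism property is attributed to the definition of the coefficients $\sigma_{ij}$, which you flesh out via the face-by-face decomposition $\sigma_{ij}=\sum_k\sigma_{ij}^{(k)}$, and injectivity is attributed to the monomial basis of $\T_\lambda^q$, which you flesh out via the disjoint-supports observation for the exponent map $\phi\col\Z^n\to\Z^{3m}$. The paper states both facts without elaboration, so your version is simply a more detailed rendition of the same argument.
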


\begin{proof}
The fact that $f$ extends to an algebra homomorphism is a consequence of the definition of the coefficients $\sigma_{ij}$ occuring in the
skew-commutativity relations of $\T_\lambda^q$. The injectivity  immediately follows from the fact that, as a vector space, $\T_\lambda^q$ admits a natural basis consisting of all the monomials $X_1^{n_1} \dots X_m^{n_m}$ with $n_j\in\Z$. 
\end{proof}

We will henceforth consider
$\T_\lambda^q$ as a subalgebra of $\bigotimes_{j=1}^m \mathcal
T_{T_j}^q$.

\subsection{Local representations}
\label{sebsect:LocalRep}

Now, suppose that we are given an irreducible representation $\rho_j\col
\T_{T_j}^q \to \End (V_j)$ for every face $T_j$. The tensor
product $\bigotimes_{j=1}^m \rho_j \col \bigotimes_{j=1}^m \mathcal
T_{T_j}^q \to \End (\bigotimes_{j=1}^m V_j)$ restricts to a representation
$$
\rho\col \T_\lambda^q \to \End (V_1 \otimes \dots \otimes V_m).
$$

We would like to define a local representation of $\T_\lambda^q$ as
any representation obtained in this way. However, we need to be a little more careful in the case where an edge of the ideal triangulation $\lambda$ corresponds to two sides of the same face of the triangulation

\begin{defi}
A \emph{local representation}  of $\T_\lambda^q$ is an equivalence class of $m$--tuples $(\rho_1, \rho_2, \dots, \rho_m)$ where each $\rho_j\col
\T_{T_j}^q \to \End (V_j)$ is an irreducible representation of the Chekhov-Fock algebra $\T_{T_j}$ associated to the $j$--th face $T_j$ of $\lambda$, and where $(\rho_1, \rho_2, \dots, \rho_m)$ and $(\rho_1', \rho_2', \dots, \rho_m')$ are identified if and only if:
\begin{enumerate}
\item for every face $T_j$ of $\lambda$, the representations $\rho_j$ and $\rho_j'$ act on the same vector space $V_j=V_j'$;
\item for every edge $\lambda_i$ of $\lambda$,
\begin{enumerate}
\item  if $\lambda_i$ separates two
distinct faces $T_j$ and $T_k$ and if $X_{ji}\in \T_{T_{j}}^q$ and $X_{ki}\in \mathcal
T_{T_{k}}^q$ are the generators respectively associated to the sides of
$T_j$ and $T_k$ corresponding to
$\lambda_i$, there exists a number $a\in \C^*$ such that $\rho_j'(X_{ji} )= a\rho_j(X_{ji})$ and $\rho_j'(X_{ki}) = a^{-1}\rho_j(X_{ki})$;
\item  if $\lambda_i$
corresponds to two sides of the same face $T_j$ and if $X_{ji_1}$,
$X_{ji_2}\in\T_{T_j}^q$ are the generators associated to these two
sides, there exists a number $a\in \C^*$ such that $\rho_j'(X_{ji_1}) = a\rho_j(X_{ji_1})$ and $\rho_j'(X_{ji_2}) = a^{-1}\rho_j(X_{ji_2})$. 
\end{enumerate}
\end{enumerate}
\end{defi}

A local representation clearly defines a unique representation $\rho\col \T_\lambda^q \to \End (V_1 \otimes \dots \otimes V_m)$, where each vector space $V_j$ is associated to the $j$--th face $T_j$ of $\lambda$ and has dimension $N$. If every edge of $\lambda$ separates two distinct faces,  the representations $\rho_j\col
\T_{T_j}^q \to \End (V_j)$ can be easily recovered from $\rho$ up to the above equivalence relation. However, this is not true when one edge corresponds to two sides of the same face of $\lambda$. 

By abuse of notation, we will often refer to a local representation by mentioning only the representation  $\rho\col \T_\lambda^q \to \End (V_1 \otimes \dots \otimes V_m)$. This is no abuse if every edge of $\lambda$ belongs to two distinct faces. However, to deal with the special cases where this does not hold, we should always remember that a local representation actually involves the data of a family of irreducible representations $\rho_j\col
\T_{T_j}^q \to \End (V_j)$ associated to the faces of $\lambda$, and well-defined modulo the above equivalence relation. 

To explain the terminology note that, for every generator $X_i$ of $\T_\lambda^q$ associated to the $i$--th edge $\lambda_i$ of $\lambda$, its image $\rho(X_i)$ under a local representation $\rho$ is locally defined, in the sense that it depends only on data associated to the one or two faces of $\lambda$ that are adjacent to $\lambda_i$. 

\subsection{Classification of local
representations}
\label{sect:ClassifyLocalRep}

 Two local representations 
$
\rho\col \T_\lambda^q \to \End (V_1 \otimes \dots \otimes V_m)
$
and 
$
\rho'\col \T_\lambda^q \to \End (V_1' \otimes \dots \otimes V_m')
$ of the Chekhov-Fock algebra $\T_\lambda^q$
are \emph{isomorphic as local representations} if they can be realized by representations $\rho_j\col
\T_{T_j}^q \to \End (V_j)$ and $\rho_j'\col
\T_{T_j}^q \to \End (V_j')$ such that each $\rho_j$ is isomorphic to $\rho_j'$, namely such that there exists linear isomophisms
$L_j\col V_j '\to V_j$ such that 
$$
\rho_j'(X) = L_j ^{-1}\circ \rho_j(X) \circ  L_j
$$
for every $X\in \T_{T_j}^q$. 

\begin{lem}
\label{lem:LocalRepInvariants}
Let $
\rho\col \T_\lambda^q \to \End (V_1 \otimes \dots \otimes V_m)
$ be a local representation of the Chekhov-Fock algebra $\mathcal
T_{\lambda}^q$. If $X_i\in\T_\lambda^q$ is associated to the $i$--th
edge $\lambda_i$ of $\lambda$, then
$\rho(X_i^N) = x_i\,
\Id_{V_1\otimes\dots \otimes V_m}$ for some complex number $x_i \in
\C^*$. Similarly, there exists an  $h \in \C^*$ such that
$\rho(H) = h\, \Id_{V_1\otimes\dots \otimes V_m}$ for the principal
central element $H= q^{-\sum_{i<j} \sigma_{ij}} X_1 X_2\dots X_n$. In
addition,
$h^N = x_1
\dots x_n$. 
\end{lem}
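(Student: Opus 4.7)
The plan is to pull both assertions back through the embedding $f\col \T_\lambda^q\hookrightarrow \bigotimes_{j=1}^m\T_{T_j}^q$ of \S\ref{sect:EmbedCheFock} and then invoke Lemma~\ref{lem:TriangleRep} in each tensor factor. Since every irreducible triangle representation $\rho_j$ sends the central elements of $\T_{T_j}^q$ to scalar multiples of $\Id_{V_j}$, the task reduces to checking that $f(X_i^N)$ and $f(H)$ can each be written, across the tensor factors, as a product of such central elements.

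For the $X_i^N$ statement, I would compute $f(X_i^N)$ using the two cases in the definition of $f$. When $\lambda_i$ borders two distinct faces, $f(X_i)=X_{ji}\otimes X_{ki}$ lies in separate tensor factors, so $f(X_i^N)=X_{ji}^N\otimes X_{ki}^N$ automatically. When $\lambda_i$ is adjacent to two sides of the same face $T_j$, a short induction using $X_{ji_1}X_{ji_2}=q^2X_{ji_2}X_{ji_1}$ gives $(X_{ji_1}X_{ji_2})^N=q^{-N(N-1)}X_{ji_1}^N X_{ji_2}^N$, and combined with the $q^{-N}$ prefactor from $f$ this collapses to $f(X_i^N)=X_{ji_1}^N X_{ji_2}^N$ using $q^{N^2}=(-1)^{N(N+1)}=1$. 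In both cases each $X_{j,\ast}^N$ is central in its triangle algebra, so Lemma~\ref{lem:TriangleRep} gives $\rho_j(X_{j,\ast}^N)\in\C^*\cdot \Id_{V_j}$, and taking the appropriate product yields $\rho(X_i^N)=x_i\,\Id$ with $x_i\in\C^*$.

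For the $H$ statement, my plan is to show that $f(H)$ is, across tensor factors, precisely the product of the principal central elements $H_j=q^{-1}X_{j,1}X_{j,2}X_{j,3}\in\T_{T_j}^q$. The Weyl-symmetric normalization $q^{-\sum_{i<j}\sigma_{ij}}$ in the definition of $H$ is tailor-made to absorb both the $q^{-1}$ factors arising from self-adjacent edges and the $q$-powers produced when reordering $f(X_1)f(X_2)\cdots f(X_n)$ so that the factors belonging to each face $T_j$ are grouped together. I expect this $q$-bookkeeping to be the main obstacle: one must match the total $q$-exponent on the $f$ side with the $q^{-m}$ coming from the $m$ Weyl-symmetrizations defining the $H_j$'s. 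Once the identification $f(H)=H_1H_2\cdots H_m$ is established, Lemma~\ref{lem:TriangleRep} gives $\rho_j(H_j)=h_j\Id_{V_j}$, so that $\rho(H)=h\,\Id$ with $h=\prod_j h_j\in\C^*$.

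The last assertion $h^N=x_1\cdots x_n$ does not require the embedding $f$. Since each $X_i^N$ is central in $\T_\lambda^q$, an induction on $n$ based on $X_iX_j=q^{2\sigma_{ij}}X_jX_i$ gives $(X_1\cdots X_n)^N=q^{-sN(N-1)}X_1^N\cdots X_n^N$ with $s=\sum_{i<j}\sigma_{ij}$. Therefore
$$
H^N=q^{-sN}(X_1\cdots X_n)^N=q^{-sN^2}X_1^N\cdots X_n^N=X_1^N\cdots X_n^N,
$$
again by $q^{N^2}=1$, and applying $\rho$ to both sides yields the claimed identity $h^N=x_1\cdots x_n$.
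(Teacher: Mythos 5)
Your proof is correct and follows essentially the same route as the paper's (very terse) argument: push everything through the embedding $f\col\T_\lambda^q\hookrightarrow\bigotimes_j\T_{T_j}^q$, invoke Lemma~\ref{lem:TriangleRep} on each tensor factor, and observe that $f(H)=H_1\cdots H_m$. Your $q$-exponent bookkeeping for the self-adjacent case (collapsing $q^{-N}\cdot q^{-N(N-1)}=q^{-N^2}=1$ via $q^{N^2}=(-1)^{N(N+1)}=1$) is exactly the point that makes the lemma work at roots of unity of this type, and it is right. The one small divergence is your last paragraph: you obtain $h^N=x_1\cdots x_n$ by a direct Weyl-ordering computation inside $\T_\lambda^q$ (using centrality of the $X_i^N$ and $q^{N^2}=1$), whereas the paper's implicit derivation is to combine $h=h_1\cdots h_m$, $h_j^N=x_{j1}x_{j2}x_{j3}$ from Lemma~\ref{lem:TriangleRep}, and the fact that each edge weight $x_i$ factors as the product of the two corresponding triangle-side weights, so that the double product telescopes. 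Both are elementary; yours is self-contained in $\T_\lambda^q$, the paper's reuses the triangle case one more time. Either way the argument goes through.
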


\begin{proof} Lemma~\ref{lem:TriangleRep} proves
this for the triangle algebra. The general case
immediately follows.  Note that $H=H_1\dots H_m$
where $H_j$ denotes the principal central element
of the triangle algebra $\T_{T_j}^q$
associated to the face $T_j$ of $\lambda$. 
\end{proof}

In particular, Lemma~\ref{lem:LocalRepInvariants}
associates to the local representation $\rho$ a
system of locally defined edge weights
$x_i\in
\C^*$ for the ideal triangulation $\lambda$,
as well as a global invariant
$h\in
\C^*$. We call $h$ the \emph{central load} of the
representation $\rho$. 

\begin{prop}
\label{prop:ClassifyLocalRep}
Up to isomorphism of local representations,  the local
representation
$
\rho\col \T_\lambda^q \to  \End (V_1 \otimes \dots
\otimes V_m)
$
is classified by the complex numbers $x_1$, \dots, $x_n$, $h
\in
\C^*$ introduced in Lemma~\ref{lem:LocalRepInvariants}.
Namely, two such local representations are isomorphic as
local representations if and only if they are associated to
the same edge weights $x_i \in \C^*$ and to the same
central load $h$. 

Conversely, any set of edge weights $x_i \in \C^*$ and any $N$--th
root $h$ of $x_1\dots x_n$ can be realized by a local representation
$\rho$. 
\end{prop}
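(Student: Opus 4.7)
The plan is to reduce the statement to the triangle algebra classification of Lemma~\ref{lem:TriangleRep} by exploiting the edge rescaling built into the definition of local representation equivalence. Necessity is immediate: an isomorphism of local representations identifies each triangle representation $\rho_j$ with $\rho_j'$, so by Lemma~\ref{lem:TriangleRep} all face-level invariants agree, and hence so do the global $x_i$ (each obtained as the scalar action of $X_i^N$, a product of the $N$-th power side weights of the one or two adjacent faces) and $h=\prod_j h_j$.

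For sufficiency, starting from representatives $(\rho_j)$ and $(\rho_j')$ of $\rho$ and $\rho'$ with the same $(x_i,h)$, I would use the edge rescaling freedom to make each $\rho_j'$ isomorphic to $\rho_j$ as a triangle representation. A rescaling by $a_i\in\C^*$ attached to edge $\lambda_i$ multiplies the side weight of $\rho_j'$ at $\lambda_i$ by $a_i^N$ and the side weight of $\rho_k'$ at $\lambda_i$ by $a_i^{-N}$, and similarly multiplies the central loads $h_j'$, $h_k'$ by $a_i$ and $a_i^{-1}$ respectively. Because the products of these side weights equal $x_i$ for both $\rho$ and $\rho'$, the system determining $a_i^N$ is consistent, and any $N$-th root $A_i$ of the prescribed value yields a rescaling after which all triangle side weights match. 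The central loads $h_j'$ then differ from the targets $h_j$ by $N$-th roots of unity $\delta_j$ with $\prod_j\delta_j=h/h'=1$.

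The residual freedom $a_i=A_i\omega_i$, with $\omega_i$ an $N$-th root of unity, alters $h_j'$ by $\prod_i\omega_i^{\epsilon_{ij}}$, where $\epsilon_{ij}\in\{-1,0,1\}$ records the signed incidence of $\lambda_i$ with $T_j$; since $\sum_j\epsilon_{ij}=0$, this modification lies in the kernel of the total-product map on the group of tuples of $N$-th roots of unity. The main technical step, and what I expect to be the principal obstacle, is showing that every $(\delta_j)$ in this kernel is realized. I would handle it by a spanning-tree argument in the dual graph of $\lambda$ (vertices being faces, edges being the edges of $\lambda$ separating two distinct faces), which is connected because $S$ is connected: set $\omega_i=1$ on non-tree edges and solve iteratively from the leaves. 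After this final adjustment, Lemma~\ref{lem:TriangleRep} gives $\rho_j'\cong\rho_j$ face by face, hence an isomorphism of local representations.

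For existence, given $x_i\in\C^*$ and an $N$-th root $h$ of $\prod_i x_i$, I would distribute the edge weights by choosing, for each $\lambda_i$, two numbers with product $x_i$ and assigning them to the two sides of $\lambda_i$ (for instance $x_i$ and $1$; if both sides of $\lambda_i$ lie in a single face $T_j$, both are assigned within $T_j$). This yields for each face $T_j$ three side weights whose product has $\prod_j(x_{j,1}x_{j,2}x_{j,3})=\prod_i x_i=h^N$. Choosing $N$-th roots $h_j$ of each triple product and adjusting one of them by an $N$-th root of unity produces $\prod_j h_j=h$, after which Lemma~\ref{lem:TriangleRep} realizes each triple as an irreducible triangle representation whose tuple defines a local representation with the prescribed invariants.
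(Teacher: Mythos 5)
Your proof is correct and follows the same route as the paper, which reduces the statement to the triangle-algebra classification of Lemma~\ref{lem:TriangleRep}. The paper simply asserts that the proposition ``immediately follows from definitions'' and from that lemma, whereas you have usefully made explicit the real content of the sufficiency direction: after matching side weights by a choice of $N$--th roots, the residual discrepancy in central loads $(\delta_j)$ lies in the kernel of the total-product map, and the connectedness of the dual graph (spanning-tree / $H_0$ argument) shows this kernel is exactly the image of the edge-rescaling action, so the discrepancy can be absorbed. This is a genuine detail the paper leaves implicit, and your treatment of it, along with the consistency check $x_{ji}/x_{ji}'=x_{ki}'/x_{ki}$ and the handling of self-folded edges (where $\epsilon_{ij}=0$), is accurate.
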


\begin{proof}
This immediately follows from definitions and from
the classification of representations of the
triangle algebra given in Lemma~\ref{lem:TriangleRep}.
\end{proof}

Since Lemma~\ref{lem:LocalRepInvariants} shows how to read the invariants $x_i$ and $h\in \C^*$ from $\rho$, an immediate corollary of Proposition~\ref{prop:ClassifyLocalRep} is the following. 
\begin{cor}
\label{cor:LocalIsom}
Two local representations 
$
\rho\col \T_\lambda^q \to  \End (V_1 \otimes \dots
\otimes V_m)
$
and $
\rho'\col \T_\lambda^q \to  \End (V_1' \otimes \dots
\otimes V_m')
$ 
are isomorphic as local representations if and only if they are isomorphic as representations. \qed
\end{cor}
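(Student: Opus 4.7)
\medskip

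My plan is to split the biconditional into its two directions. The forward direction (isomorphic as local representations implies isomorphic as representations) is immediate from the definitions: an isomorphism of local representations provides linear isomorphisms $L_j\col V_j'\to V_j$ such that $\rho_j' = L_j^{-1}\circ \rho_j \circ L_j$ for each face, and the tensor product $L = L_1\otimes \dots \otimes L_m$ then conjugates $\rho'$ to $\rho$ on the subalgebra $\T_\lambda^q \subset \bigotimes_j \T_{T_j}^q$.

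For the reverse direction, the key observation is that the invariants $x_i$ and $h$ attached to a local representation by Lemma~\ref{lem:LocalRepInvariants} are read off as the scalars by which $\rho(X_i^N)$ and $\rho(H)$ act on $V_1\otimes\dots\otimes V_m$. Suppose $\rho$ and $\rho'$ are isomorphic as representations of $\T_\lambda^q$, via a linear isomorphism $L\col V_1'\otimes\dots\otimes V_m' \to V_1\otimes\dots\otimes V_m$ with $\rho'(X) = L^{-1}\circ \rho(X)\circ L$. Then for every edge $\lambda_i$,
\[
x_i' \,\Id = \rho'(X_i^N) = L^{-1}\circ \rho(X_i^N)\circ L = x_i\,\Id,
\]
and similarly $h' = h$ from the principal central element. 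Hence $\rho$ and $\rho'$ have identical edge weights and identical central load.

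By Proposition~\ref{prop:ClassifyLocalRep}, this coincidence of invariants is enough to conclude that $\rho$ and $\rho'$ are isomorphic as local representations. I do not expect any genuine obstacle here: the only subtlety would have been the case where an edge of $\lambda$ bounds a single face on both sides, so that the underlying tensor-product representation $\rho$ does not by itself determine the triangle-algebra pieces $\rho_j$. But this issue has already been absorbed into Proposition~\ref{prop:ClassifyLocalRep}, whose classification is stated up to the equivalence relation on $m$--tuples $(\rho_1,\dots,\rho_m)$ rather than on the tensor product representations themselves. So the corollary is truly an immediate consequence of Lemma~\ref{lem:LocalRepInvariants} together with Proposition~\ref{prop:ClassifyLocalRep}.
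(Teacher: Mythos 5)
Your proof is correct and follows exactly the paper's intended argument: the forward direction is immediate by taking the tensor product of the face-wise isomorphisms, and the reverse direction reads off the conjugation-invariant scalars $x_i$ and $h$ via Lemma~\ref{lem:LocalRepInvariants} and then invokes Proposition~\ref{prop:ClassifyLocalRep}. The paper itself treats this as an immediate corollary and does not spell out more than you have.
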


Recall that $
\rho\col \T_\lambda^q \to  \End (V_1 \otimes \dots
\otimes V_m)
$
and $
\rho'\col \T_\lambda^q \to  \End (V_1' \otimes \dots
\otimes V_m')
$ 
are isomorphic as representations if there exists a linear isomorphism $L\col V_1 '\otimes \dots
\otimes V_m '\to V_1 \otimes \dots
\otimes V_m$ such that 
$
\rho'(P) = L^{-1} \circ \,\rho(P) \circ L
$
for every $P\in \T_\lambda^q$. Corollary~\ref{cor:LocalIsom} says that $L$ can always be chosen to be \emph{tensor-split}, namely to be decomposable 
as a tensor product $L=L_1\otimes \dots \otimes L_m$ of linear isomorphisms $L_j\col V_j \to V_j'$.

\subsection{Fusing local representations}
\label{subsect:Fusing}

Let $\lambda$ be an ideal triangulation of the punctured
surface $S$. Let $R$ be the (possibly disconnected) surface
obtained by splitting
$S$ open along certain edges $\lambda_{i_1}$,
$\lambda_{i_2}$, \dots,
$\lambda_{i_l}$ of
$\lambda$.  Let $\mu$ be
the triangulation of $R$ induced by $\lambda$. 

Conversely, we can think that $S$, with its ideal
triangulation $\lambda$, is obtained from $R$ triangulated
by $\mu$ by gluing together disjoint pairs of edges of
$\mu$ which are contained in the boundary $\partial
R$. 

Note that $\lambda$ and $\mu$ have the same faces
$T_1$, $T_2$, \dots, $T_m$. As a consequence, the
Chekhov-Fock algebras $\T^q_\lambda$ and $\mathcal
T^q_\mu$ are both subalgebras of the tensor product
$\bigotimes_{j=1}^m \T_{T_j}^q$ of the triangle
algebras associated to the faces $T_j$. In addition, $\T^q_\lambda$
is contained in $\T^q_\mu$. 

From this description, we immediately see that a local
representation 
$
\sigma\col \T_\mu^q \to  \End (V_1 \otimes
\dots
\otimes V_m)
$
restricts to a representation
$
\rho\col \T_\lambda^q \to  \End (V_1 \otimes \dots
\otimes V_m)
$
which is also local. We will say that $\rho$ is obtained by
\emph{fusing} the local representation $\sigma$.

For instance, if we split $S$ along all the edges of
$\lambda$, the split surface $R$ is a disjoint union of
triangles, and local representations of $\mathcal
T^q_\lambda$ coincide with representations obtained by
fusing together irreducible representations of the corresponding
triangle algebras. 

\section{The quantum Teichm\"uller space}
\label{sect:QTS}

\subsection{Switching from one ideal triangulation to another}
\label{subsect:Switch}
We now investigate what happens as we switch from one ideal triangulation $\lambda$ to another one  $\lambda'$, with respective Chekhov-Fock
algebras $\T_\lambda^q = \C [X_1^{\pm1},
\dots, X_n^{\pm1}]_\lambda^q$ and
$\T_{\lambda'}^q=\C [X_1'{}^{\pm1}, \dots,
X_n'{}^{\pm1}]_{\lambda'}^q$.
These
skew-polynomial algebras satisfy the so-called Ore
condition, and consequently admit fraction
division algebras 
$\widehat{\T}_\lambda^q = \C (X_1,
\dots, X_n)_\lambda^q$ and
$\widehat{\T}_{\lambda'}=\C (X_1',
\dots, X_n'{})_{\lambda'}^q$; see for instance
\cite{Coh, Kas}. In practice,
$\widehat{\T}_\lambda^q = \C (X_1,
\dots, X_n)_\lambda^q$ consists of
non-commutative rational fractions in the
variables $X_1$\dots $X_n$ which are manipulated
according to the skew-com\-mutativity relations
$X_iX_j = q^{\sigma_{ij}} X_jX_i$. 

Chekhov and Fock introduce \emph{coordinate change isomorphisms}
$ \Phi_{\lambda\lambda'}^q \col \widehat{\T}_{\lambda'}^q
 \to \widehat{\T}_\lambda^q
$ which satisfy the following conditions:
\begin{enumerate}
\item (Composition Relation) $\Phi_{\lambda\lambda''}^q =\Phi_{\lambda\lambda'}^q \circ \Phi_{\lambda'\lambda''}^q $ for every $\lambda$, $\lambda'$ and $\lambda''$;

\item (Naturality Relation) if the diffeomorphism $\phi \col S \to R$ respectively sends the ideal triangulations $\lambda$, $\lambda'$ of the surface $S$ to the ideal triangulations $\mu$, $\mu'$ of the surface $R$, then the natural identifications $\T_\lambda ^q \cong \T_\mu^q$ and $\T_{\lambda'}^q \cong \T_{\mu'}^q$ induced by $\phi$ identify $\Phi_{\lambda\lambda'}^q$ to $\Phi_{\mu'\mu'}^q$;

\item (Fusion Relation) if $S$ is obtained by fusing another surface $R$ along certain components of $\partial R$, and if $\lambda$ and $\lambda'$ respectively come from ideal triangulations $\mu$ and $\mu'$ of $R$, then 
$ \Phi_{\lambda\lambda'}^q \col \widehat{\T}_{\lambda'}^q
 \to \widehat{\T}_\lambda^q
$ 
is the restriction of $ \Phi_{\mu\mu'}^q \col \widehat{\T}_{\mu'}^q
 \to \widehat{\T}_{\mu}^q
$ 
to $\widehat{\T}_{\lambda'}^q \subset \widehat{\T}_{\mu'}^q$. 
\end{enumerate}

Explicit formulas for the coordinate change isomorphisms $\Phi_{\lambda\lambda'}^q$ can be found in \cite{CF, Liu1, BonLiu}. A fundamental example is that where $S$ is a square, namely a disk with 4 punctures on its boundary, and where the edges of $\lambda$ and $\lambda'$ are indexed as shown on Figure~\ref{fig:DiagEx}. Then,
\begin{align*}
\Phi_{\lambda\lambda'}^q(X_1') &=  X_1^{-1},&&\\
\Phi_{\lambda\lambda'}^q(X_2')  &= (1+qX_1) X_2,
&\Phi_{\lambda\lambda'}^q(X_3') 
& = (1+qX_1^{-1})^{-1} X_3,\\ 
\Phi_{\lambda\lambda'}^q(X_4')  &= (1+qX_1) X_4 
&\Phi_{\lambda\lambda'}^q(X_5')
& = (1+qX_1^{-1})^{-1} X_5.
\end{align*}

\begin{figure}[htb]
\SetLabels
( .13 * .5) $\lambda_1$ \\
( .17 *  1.01) $\lambda_2$ \\
( .34 * .5 ) $\lambda_3$ \\
( .17 * -.08 )  $\lambda_4$ \\
( -.02 * .5 ) $\lambda_5$ \\
(  .87* .5 ) $\lambda_1'$ \\
( .83 * 1.01 ) $\lambda_2'$ \\
( 1.02 * .5 ) $\lambda_3'$ \\
( .83 * -.08 ) $\lambda_4'$ \\
( .66 * .5 ) $\lambda_5'$ \\
(  .1*  .7)  $T_1$\\
( .21 * .3 )  $T_2$\\
( .9 * .7 )  $T_1'$\\
( .79 * .3 )  $T_2'$\\
\endSetLabels
\centerline{
\AffixLabels{\includegraphics{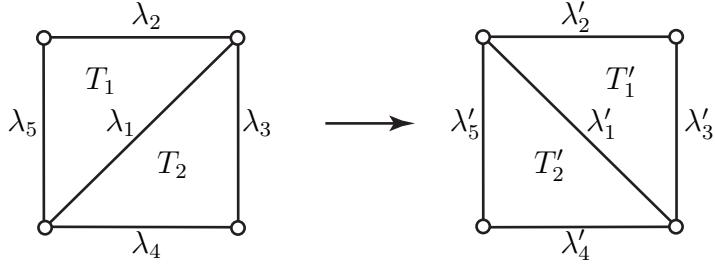}}}
\caption{A diagonal exchange}
\label{fig:DiagEx}
\end{figure}

The fusion point of view can actually be used to somewhat simplify the case-by-case analysis required by the analysis of non-embedded diagonal exchanges in \cite{Liu1, BonLiu}.

In \cite{Bai1}, it is proved that the coordinate change isomorphisms $\Phi_{\lambda\lambda'}^q$ are completely determined by the Composition, Naturality and Fusion Conditions, modulo some uniform renormalization of the $X_i$. The combination of the Naturality and Fusion Conditions is equivalent to the Locality Condition considered in \cite{Bai1}. 

The \emph{quantum Teichm\"uller space} $\T_S^q$ of the punctured surface $S$ is defined as the quotient
$$
\T_S^q = \bigsqcup_\lambda \widehat{\T}_\lambda^q /\sim \   = \bigsqcup_\lambda\C (X_1,
\dots, X_n)_\lambda^q  /\sim
$$
of the disjoint union of the $ \widehat{\T}_\lambda^q$ of all ideal triangulations $\lambda$ of $S$, where the equivalence relation $\sim$ identifies $\widehat{\T}_\lambda^q = \C (X_1,
\dots, X_n)_\lambda^q$  to 
$\widehat{\T}_{\lambda'}^q=\C (X_1',
\dots, X_n'{})_{\lambda'}^q$ by the coordinate change isomorphism $\Phi_{\lambda\lambda'}^q$. Because the $\Phi_{\lambda\lambda'}^q$ are algebra isomorphisms, the quantum Teichm\"uller space  $\T_S^q$ inherits an algebra structure from the $\widehat{\T}_\lambda^q $.

\subsection{Representations of the quantum Teichm\"uller space}
By the above definitions, a representation of the quantum Teichm\"uller space $\T_S^q$ should be equivalent to the data, for each ideal triangulation $\lambda$, of an algebra homomorphism $\rho_\lambda \col \widehat{\T}_\lambda^q \to \End(V)$ from  $\widehat{\T}_\lambda^q $ to the algebra of endomorphisms of a vector space $V$, in such a way that $\rho_{\lambda'} = \rho_\lambda\circ \Phi_{\lambda\lambda'}^q$ for every $\lambda$, $\lambda'$. 

This definition would be natural but, if we are interested in finite-dimensional representations, leads to several stumbling blocks. 

The main difficulty is that there is no algebra homomorphism $\rho_\lambda \col \widehat{\T}_\lambda^q \to \End(V)$ with $V$ finite-dimensional. Indeed, every rational fraction $R\not=0$ is invertible in $\widehat{\T}_\lambda^q$, so that $\rho_\lambda(R)$ would have to be invertible in $\End(V)$. However, when $V$ is finite-dimensional, so is $\End(V)$ whereas $\widehat{\T}_\lambda^q$ is infinite-dimensional as a vector space. It follows that there are many $R\not= 0$ such that $\rho_\lambda(R)$ is equal to $0$, and in particular is not invertible.

One way to solve this problem is to restrict attention to Laurent polynomials, and to consider algebra homomorphisms $\rho_\lambda\col \T_\lambda^q = \C [X_1^{\pm1},
\dots, X_n^{\pm1}]_\lambda^q \to \End(V)$. Indeed, we observed in Section~\ref{sect:LocalRep} that the Chekhov-Fock algebra $\T_\lambda^q $ does admit many finite-dimensional representations.

Unfortunately, this quick fix creates another problem: Since the coordinate change isomorphisms $\Phi_{\lambda\lambda'}^q$ are valued in the fraction division algebra $\widehat{\T}_\lambda^q$ and not in the Chekhov-Fock algebra ${\T}_\lambda^q$, the composition $\rho_\lambda\circ \Phi_{\lambda\lambda'}^q$ does not automatically make sense, so that the condition $\rho_{\lambda'} = \rho_\lambda\circ \Phi_{\lambda\lambda'}^q$ has to be suitably reinterpreted. 

Given an algebra homomorphism $\rho_\lambda\col \T_\lambda^q \to \End(V)$, we will say that the algebra homomorphism 
$$\rho_\lambda\circ \Phi_{\lambda\lambda'}^q \col   \T_{\lambda'}^q = \C [X_1^{\pm1},
\dots, X_n^{\pm1}]_{\lambda'}^q \to \End(V) $$
 \emph{makes sense} if, for every Laurent polynomial $X'\in \T_{\lambda'}^q$, the rational fraction $\Phi_{\lambda\lambda'}^q(X') \in \widetilde{\T}_\lambda^q$ can be written as quotients
$$
\Phi_{\lambda\lambda'}^q(X') = PQ^{-1} = Q'{}^{-1}P'
$$
of Laurent polynomials $P$, $Q$, $P'$, $Q'\in \T_\lambda^q$ such that $\rho_\lambda(Q)$ and $\rho_\lambda(Q')$ are invertible in $\End(V)$. We then define 
$$\rho_\lambda\circ \Phi_{\lambda\lambda'}^q(X')
=\rho_\lambda(P) \rho_\lambda(Q)^{-1} = \rho_\lambda(Q')^{-1}  \rho_\lambda(P') \in \End(V).$$
 One easily sees that $\rho_\lambda\circ \Phi_{\lambda\lambda'}^q(X)$  does not depend on the decomposition of $\Phi_{\lambda\lambda'}^q(X)$ as a quotient of polynomials, and that this defines an algebra homomorphism
$\rho_\lambda\circ \Phi_{\lambda\lambda'}^q  \col \T_{\lambda'}^q  \to \End(V)$.
These two properties strongly use the 2--sided decomposition of each $\Phi_{\lambda\lambda'}^q(X)$.

\begin{defi}
A \emph{representation} of the quantum Teichm\"uller space $\T_S^q$ of the surface $S$ over the finite-dimensional vector space $V$ consists  of the data  of an algebra homomorphism $\rho_\lambda\col \T_\lambda^q = \C [X_1^{\pm1},
\dots, X_n^{\pm1}]_\lambda^q \to \End(V)$ for every ideal triangulation $\lambda$ in such a way that, for every $\lambda$, $\lambda'$, the representation
 $\rho_\lambda\circ \Phi_{\lambda\lambda'}^q  \col \T_{\lambda'}^q  \to \End(V)$ makes sense and is equal to  $\rho_{\lambda'}$.
\end{defi}

Strictly speaking,  a representation of the quantum Teichm\"uller space is not a representation of the algebra $\T_S^q$ in the usual sense. However, this terminology appears more convenient than that of \cite{BonLiu}, where the same object was called  a representation of the polynomial core of the quantum Teichm\"uller space. 

Given two specific ideal triangulations $\lambda$ and $\lambda'$ and a representation $\rho_\lambda\col \T_\lambda^q  \to \End(V_\lambda)$, it is in general quite difficult to check that the representation $\rho_\lambda \circ \Phi_{\lambda\lambda'}^q$ makes sense. This is much easier if we consider all ideal triangulations at the same time.

\begin{lem}
\label{lem:EnuffCheckDiagEx}
Let an algebra homomorphism $\rho_\lambda\col \T_\lambda^q \to \End(V)$ be given for every ideal triangulation $\lambda$. 
Suppose that $\rho_\lambda\circ \Phi_{\lambda\lambda'}^q  \col \T_{\lambda'}^q  \to \End(V)$ is equal to $\rho_{\lambda'}$ whenever  $\lambda$ and $\lambda'$  differ by a diagonal exchange.  Then the $\rho_\lambda$ form a representation of the quantum Teichm\"uller space of $S$. 
\end{lem}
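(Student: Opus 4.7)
The plan is to reduce the general compatibility to the diagonal-exchange case by induction, using the classical connectivity of the Harer--Penner complex: any two ideal triangulations $\lambda$, $\lambda'$ of $S$ can be joined by a finite sequence $\lambda=\lambda_0, \lambda_1, \dots, \lambda_k=\lambda'$ in which consecutive triangulations differ by a single diagonal exchange. Combined with the Composition Relation
$$\Phi_{\lambda_0\lambda_k}^q = \Phi_{\lambda_0\lambda_1}^q \circ \Phi_{\lambda_1\lambda_2}^q\circ \dots \circ\Phi_{\lambda_{k-1}\lambda_k}^q,$$
this reduces the lemma to the following transitivity property: if $\rho_\lambda \circ \Phi_{\lambda\lambda'}^q$ makes sense and equals $\rho_{\lambda'}$, and $\rho_{\lambda'} \circ \Phi_{\lambda'\lambda''}^q$ makes sense and equals $\rho_{\lambda''}$, then $\rho_\lambda \circ \Phi_{\lambda\lambda''}^q$ makes sense and equals $\rho_{\lambda''}$.

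To prove this transitivity, pick $X\in \T_{\lambda''}^q$. By the second hypothesis, write $\Phi_{\lambda'\lambda''}^q(X) = PQ^{-1} = Q'{}^{-1}P'$ with $P,Q,P',Q'\in \T_{\lambda'}^q$ and $\rho_{\lambda'}(Q)$, $\rho_{\lambda'}(Q')$ invertible. By the Composition Relation,
$$\Phi_{\lambda\lambda''}^q(X) = \Phi_{\lambda\lambda'}^q(P)\,\Phi_{\lambda\lambda'}^q(Q)^{-1} = \Phi_{\lambda\lambda'}^q(Q')^{-1}\,\Phi_{\lambda\lambda'}^q(P'),$$
an identity in $\widehat{\T}_\lambda^q$. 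Now apply the first hypothesis to each of $P, Q, P', Q'$: write $\Phi_{\lambda\lambda'}^q(P) = AB^{-1}$ and $\Phi_{\lambda\lambda'}^q(Q) = CD^{-1}$ with $A,B,C,D\in\T_\lambda^q$ and $\rho_\lambda(B)$, $\rho_\lambda(D)$ invertible. Since $\rho_{\lambda'}(Q) = \rho_\lambda\circ \Phi_{\lambda\lambda'}^q(Q) = \rho_\lambda(C)\rho_\lambda(D)^{-1}$ is invertible by assumption, $\rho_\lambda(C)$ is invertible as well. Combining these,
$$\Phi_{\lambda\lambda''}^q(X) = AB^{-1}\cdot DC^{-1},$$
and using the Ore condition in $\T_\lambda^q$ to rewrite $B^{-1}D$ as a right fraction $D_1 B_1^{-1}$, we obtain a one-sided quotient expression $\Phi_{\lambda\lambda''}^q(X) = (AD_1)(CB_1)^{-1}$ whose denominator has invertible $\rho_\lambda$-image. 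A symmetric argument, starting from the right-quotient expression $\Phi_{\lambda'\lambda''}^q(X) = Q'{}^{-1}P'$, produces a left-quotient expression with invertible denominator. Hence $\rho_\lambda\circ \Phi_{\lambda\lambda''}^q(X)$ makes sense, and on the face of it coincides with $\rho_\lambda\circ \Phi_{\lambda\lambda'}^q(P)\,\rho_\lambda\circ \Phi_{\lambda\lambda'}^q(Q)^{-1} = \rho_{\lambda'}(P)\rho_{\lambda'}(Q)^{-1} = \rho_{\lambda''}(X)$.

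The main obstacle is the bookkeeping in the second paragraph: one must track both left and right quotient decompositions simultaneously in order to invoke the definition of ``makes sense,'' and each time an Ore rewriting $B^{-1}D = D_1B_1^{-1}$ is performed one must recheck that the invertibility of the relevant $\rho_\lambda$-images survives the manipulation. The pivotal observation that unlocks everything is that invertibility of $\rho_{\lambda'}(Q)$ transfers to invertibility of the numerator $\rho_\lambda(C)$ in the representation $\Phi_{\lambda\lambda'}^q(Q) = CD^{-1}$, since the denominator is already invertible; this is what allows compositions of ``makes sense'' homomorphisms to again make sense. Once these verifications are in place, straightforward induction on the length $k$ of the diagonal-exchange path yields $\rho_\lambda\circ \Phi_{\lambda\lambda'}^q = \rho_{\lambda'}$ for arbitrary $\lambda$, $\lambda'$.
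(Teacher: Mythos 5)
The reduction to a sequence of diagonal exchanges via the connectivity of the Harer--Penner complex is the right opening move, and the observation that invertibility of $\rho_{\lambda'}(Q)$ forces invertibility of the numerator $\rho_\lambda(C)$ in $\Phi^q_{\lambda\lambda'}(Q)=CD^{-1}$ is correct. But the argument has a genuine gap exactly at the point you flag and then wave past. After the Ore rewriting $B^{-1}D = D_1B_1^{-1}$, you need $\rho_\lambda(B_1)$ to be invertible in order for $(AD_1)(CB_1)^{-1}$ to be a witnessing decomposition. The Ore condition guarantees the existence of \emph{some} nonzero $B_1,D_1\in\T_\lambda^q$ with $DB_1=BD_1$, but it says nothing about the behaviour of $\rho_\lambda$ on them. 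From $\rho_\lambda(D)\rho_\lambda(B_1)=\rho_\lambda(B)\rho_\lambda(D_1)$ and the invertibility of $\rho_\lambda(B)$ and $\rho_\lambda(D)$, one only learns that $\rho_\lambda(B_1)$ is invertible if and only if $\rho_\lambda(D_1)$ is, which is circular. There is no abstract reason why a favourable Ore pair should exist, and in a finite-dimensional target a ``generic'' $B_1$ will have non-invertible image. So the transitivity of ``makes sense'' that you posit is not a formal consequence of the Ore calculus; your ``once these verifications are in place'' is precisely the part that is missing.

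This is not a small bookkeeping issue: it is the reason the lemma is nontrivial at all. The paper's own treatment is simply a citation to \cite[\S6]{BonLiu}, with the explicit warning that ``one needs to avoid invoking the inverse of an endomorphism which is not invertible,'' and that the argument there ``depends on the specific formulas for the $\Phi^q_{\lambda\lambda'}$ associated to diagonal exchanges.'' In other words, the workable proof does not attempt a general Ore-theoretic transitivity; it exploits the concrete shape of the denominators $(1+qX_i^{\pm1})$ appearing in the diagonal-exchange formulas to control invertibility throughout the composition. Without that input, the induction you set up cannot close.
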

\begin{proof}
This is the main result of \cite[\S6]{BonLiu}. The proof is somewat more difficult that one could have anticipated at first glance, as one needs to avoid invoking the inverse of an endomorphism which is not invertible. In particular, the argument used in \cite{BonLiu} depends on the specific formulas for the $\Phi_{\lambda\lambda'}^q$ associated to diagonal exchanges. 
\end{proof}

\section{Local representations of the quantum Teichm\"uller space}

A \emph{local representation} of the quantum Teichm\"uller space $\T_S^q$ is a representation $\{\mu_\lambda : \T_\lambda^q \to \End(V) \}_{\lambda \in \Lambda(S)}$ of $\T_S^q$ such that each $\mu_\lambda$ is  isomorphic to a local representation $\rho_\lambda : \T_\lambda^q \to \End(V_\lambda)$. 

In practice, it is more convenient to put emphasis on the local representations $\rho_\lambda$. Namely, up to isomorphism, a local representation of $\T_S^q$ consists of the data of a local representation 
$\rho_\lambda \col \T_\lambda^q \to \End(V_\lambda)$ for every ideal triangulation $\lambda$, in such a way that $\rho_\lambda\circ \Phi_{\lambda\lambda'}^q   \col \T_{\lambda'}^q \to \End(V_\lambda)$ is isomorphic  to  $\rho_{\lambda'} \col \T_{\lambda'}^q \to \End(V_{\lambda'}) $ for every $\lambda$, $\lambda'$. 
Recall from the definition of local representations that each vector space $V_\lambda$ splits as a tensor product $V_\lambda = V_1 \otimes V_2 \otimes \dots \otimes V_m$ where each factor $V_j$ is an $N$--dimensional vector space attached to the $j$--th face of $\lambda$. Recall also that the local representation $\rho_\lambda$ consists of more than the algebra homomorphism $\rho_\lambda \col \T_\lambda^q \to \End(V_\lambda)$, but also includes the data of representations $\rho_j\col  \col \T_{T_j}^q \to \End(V_j)$ well-defined up to a certain equivalence relation; however, this distinction matters only when an edge of $\lambda$ corresponds to two sides of the same face. 

By Proposition~\ref{prop:ClassifyLocalRep}, the local representation $\rho_\lambda : \T_\lambda^q \to \End(V_\lambda)$ is classified by the data of a complex weight $x_i \in \C^*$ attached to each edge $\lambda_i$ of $\lambda$, plus a central load $h = \sqrt[N]{x_1x_2\dots x_n}$. We want to determine how these edge weights and central load change as we switch from $\lambda$ to another ideal triangulation $\lambda'$. 

It turns out that the corresponding change of edge weights is related to the  coordinate change isomorphism $\Phi_{\lambda\lambda'}^1 \col \T_{\lambda'}^1 \to \T_\lambda^1 $ associated to the non-quantum case where $q=1$. Indeed, in this non-quantum (more traditionally called semi-classical) case where $q=1$, the Chekhov-Fock algebras $\T_\lambda^1$ and $\T_{\lambda'}^1$ are both isomorphic to the algebra of commutative rational fractions $\C(X_1, X_2, \dots, X_n)$. Then, the coordinate change isomorphism $$
\Phi_{\lambda\lambda'}^1 \col \T_{\lambda'}^1 = \C(X_1', X_2', \dots , X_n') \to \C(X_1, X_2, \dots, X_n) = \T_\lambda^1
$$ 
determines a  rational function $\phi_{\lambda\lambda'}\col \C^N \to \C^N$ (only partially defined as a function) by the property that the $i$--th coordinate of $\phi_{\lambda\lambda'}(X_1, X_2, \dots, X_n)$ is equal to $\Phi_{\lambda\lambda'}^1(X_i')$. Conversely, $\phi_{\lambda\lambda'}$ determines $\Phi_{\lambda\lambda'}^1$ because $\Phi_{\lambda\lambda'}^1(F) = F\circ \phi_{\lambda\lambda'}$ for every $F \in \C(X_1', X_2', \dots, X_n')$. 

Our hypothesis that $q$ be a primitive $N$--th root of $(-1)^{N+1}$ is critical for the following result. 

\begin{prop}
\label{prop:RepGivesWeights}
Let 
$\rho= \{\rho_\lambda : \T_\lambda^q \to \End(V_\lambda) \}_{\lambda \in \Lambda(S)}$ be a local representation of the quantum Teichm\"uller space $\T_S^q$. If $\rho_\lambda$ is classified by the edge weights $x_i\in \C^*$ and by the central load $h\in \C^*$, and if $\rho_{\lambda'}$ is classified by the edge weights $x_{i'}'\in \C^*$ and by the central load $h'$, then $h=h'$ and 
$$
(x_1', x_2', \dots, x_n') = \phi_{\lambda\lambda'}(x_1, x_2, \dots, x_n).
$$
\end{prop}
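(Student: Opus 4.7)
The plan is to reduce to the case of a single diagonal exchange via the Composition Relation, then verify the transformation law using the explicit diagonal-exchange formulas together with a key algebraic identity that, under our hypothesis on $q$, converts the quantum formulas into their semi-classical counterparts upon taking $N$--th powers.

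Since any two ideal triangulations of $S$ are connected by a finite sequence of diagonal exchanges, and since both $\Phi_{\lambda\lambda'}^q$ and the rational maps $\phi_{\lambda\lambda'}$ satisfy a Composition Relation (cf.\ Lemma~\ref{lem:EnuffCheckDiagEx}), it suffices to treat the case where $\lambda$ and $\lambda'$ differ by a single diagonal exchange. By the Naturality and Fusion Relations, we may further assume that we are in the model square of Figure~\ref{fig:DiagEx}, with the explicit formulas for $\Phi_{\lambda\lambda'}^q$ given in \S\ref{subsect:Switch}. The computational heart is the identity
\[
\bigl((1+qX)Y\bigr)^N = (1+X^N)\,Y^N,
\]
valid for skew-commuting variables with $XY=q^2YX$ under our hypothesis that $q^2$ is a primitive $N$--th root of $q^N=(-1)^{N+1}$: iterating $Y(1+qX)=(1+q^{-1}X)Y$ yields $\bigl((1+qX)Y\bigr)^N=\prod_{j=0}^{N-1}(1+q^{1-2j}X)\,Y^N$, and the $N$ distinct coefficients $q^{1-2j}$ are exactly the $N$--th roots of $(-1)^{N+1}$, so $\prod_{j=0}^{N-1}(1+q^{1-2j}X)=1+X^N$ by the factorization $\prod_{w^N=a}(t-w)=t^N-a$. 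An analogous argument gives $\bigl((1+qX^{-1})^{-1}Y\bigr)^N=(1+X^{-N})^{-1}Y^N$.

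Since the $X_i^N$ are central in $\T_\lambda^q$, assembling these identities edge by edge for the five edges of Figure~\ref{fig:DiagEx} shows that $\Phi_{\lambda\lambda'}^q\bigl((X_i')^N\bigr)=F_i(X_1^N,\dots,X_n^N)$, where $F_i(Y_1,\dots,Y_n)$ is precisely the commutative rational function $\Phi_{\lambda\lambda'}^1(X_i')$, i.e.\ the $i$--th component of $\phi_{\lambda\lambda'}$. Applying $\rho_\lambda$ and using Lemma~\ref{lem:LocalRepInvariants} then gives
\[
x_i'\,\Id=\rho_{\lambda'}\bigl((X_i')^N\bigr)=\rho_\lambda\bigl(F_i(X_1^N,\dots,X_n^N)\bigr)=F_i(x_1,\dots,x_n)\,\Id,
\]
which is the edge-weight part of the claim. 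For the central load, a direct computation with the diagonal-exchange formulas (one checks that the $(1+qX_1)^{\pm1}$ factors and the Weyl-ordering $q$-powers telescope in the product $X_1'X_2'\cdots X_n'$) shows $\Phi_{\lambda\lambda'}^q(H')=H$, whence $h'\,\Id=\rho_\lambda\bigl(\Phi_{\lambda\lambda'}^q(H')\bigr)=\rho_\lambda(H)=h\,\Id$. The main obstacle I anticipate is this final bookkeeping: even though each individual $N$--th power identity is routine, combining them to match $\phi_{\lambda\lambda'}$ on the nose and to obtain $\Phi_{\lambda\lambda'}^q(H')=H$ requires careful tracking of the exponents $q^{-\sum\sigma_{ij}}$ and of the centrality of the $X_i^N$.
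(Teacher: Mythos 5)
Your proposal is correct and follows essentially the same route as the paper's proof: reduce to a single diagonal exchange via the Composition Relation, handle the general diagonal exchange via the Fusion and Naturality Relations, and reduce to the explicit square formulas. The one place your argument diverges in flavor is the proof of the key identity $\bigl((1+qX)Y\bigr)^N=(1+X^N)Y^N$: you use the iterated commutation $Y(1+qX)=(1+q^{-1}X)Y$ together with the factorization $\prod_{w^N=(-1)^{N+1}}(1+wZ)=1+Z^N$, whereas the paper invokes the Quantum Binomial Formula (the $q^2$-binomial coefficients vanish when $q^2$ is a primitive $N$-th root of unity) to get $(A+B)^N=A^N+B^N$ directly. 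These are of course equivalent, and your root-factorization argument is a perfectly good alternative. You also spell out the central load claim $h=h'$ by asserting $\Phi_{\lambda\lambda'}^q(H')=H$; the paper does not make this explicit, relying instead on the reader to know that the principal central element is preserved by the coordinate changes, so your version is slightly more complete on this point. One small caveat: in the general (possibly non-embedded) diagonal exchange, your phrase ``we may further assume that we are in the model square'' compresses the same splitting-and-fusing step that the paper executes carefully, namely that $\Phi_{\lambda\lambda'}^q$ is the restriction of $\Phi_{\lambda_Q\lambda_Q'}^q\otimes\Id$ after cutting $S$ along the sides of the square; since this is precisely the Fusion Relation for the coordinate changes and you cite it, this is a matter of exposition rather than a gap.
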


In particular, the edge weights $(x_1, x_2, \dots, x_n)\in \C^n$ are in the domain of $\phi_{\lambda\lambda'}$, so that $ \phi_{\lambda\lambda'}(x_1, x_2, \dots, x_n)$ is well-defined.

\begin{proof}[Proof of Proposition~\ref{prop:RepGivesWeights} when $S$ is a square]
The square $S$ admits only the two ideal triangulations represented in  Figure~\ref{fig:DiagEx}. 

Recall that the weights $x_i$ and $x_i'$ are defined by the property that $\rho_\lambda(X_i^N) = x_i \Id_{V_\lambda}$ and $\rho_{\lambda'}(X_i'{}^N) = x_i' \Id_{V_{\lambda'}}$. 
The argument is then a simple algebraic computation using the explicit formulas for $\Phi_{\lambda\lambda'}^q$ in the case of the square,  as in  \cite[Lem.~27]{BonLiu} and in the end of \cite{CF}. 

For instance, with the edge indexing of  Figure~\ref{fig:DiagEx},
\begin{align*}
\rho_\lambda\circ \Phi_{\lambda\lambda'}^q(X_2'{}^N )
&=\bigl( \rho_\lambda(X_2) + q\rho_{\lambda}(X_1)\rho_\lambda(X_2)\bigr )^N \\
&=  \rho_\lambda(X_2)^N + \bigl(q\rho_{\lambda}(X_1)\rho_\lambda(X_2)\bigr )^N \\
&=  \rho_\lambda(X_2)^N + \rho_{\lambda}(X_1)^N \rho_\lambda(X_2)^N \\
& = x_2 \Id_{V_\lambda} + x_1x_2 \Id_{V_\lambda}
\end{align*}
where the second equality combines the Quantum Binomial Formula with the hypothesis that $q^2$ is a primitive $N$--th root of unity, while the third one uses the condition that $q^N=(-1)^{N+1}$. 
Since $\rho_\lambda\circ \Phi_{\lambda\lambda'}^q(X_2'{}^N )$ is conjugate to $\rho_{\lambda'}(X_2'{}^N)= x_2' \Id_{V_{\lambda'}}$, it follows that $x_2' = (1+x_1)x_2$. 

Similar computations give that $x_1' = x_1^{-1}$, $x_3'=(1+x_1^{-1})x_3$, $x_4' = (1+x_1)x_4$ and $x_5'=(1+x_1^{-1})^{-1}x_5$. Namely, by the explicit formula for the non-quantum coordinate change isomorphism $\Phi_{\lambda\lambda'}^1$, the edge weight system $(x_1', x_2', x_3', x_4', x_5')$ is equal to $\phi_{\lambda\lambda'} (x_1, x_2, x_3, x_4, x_5)$. 
\end{proof}

\begin{proof}[Proof of Proposition~\ref{prop:RepGivesWeights} in the general case]
By \cite{Har, Pen, Hat}, any two ideal triangulations can be connected by a sequence of  diagonal exchanges. Since the $\phi_{\lambda\lambda'}$ satisfy the Composition Relation of Section~\ref{subsect:Switch} (with the order reversed), it suffices to check Proposition~\ref{prop:RepGivesWeights} when $\lambda$ and $\lambda'$ differ only by a diagonal exchange. 

If we split open the surface $S$ along the sides of the square where the diagonal exchange takes place, we obtain a square $Q$ and a (possibly disconnected) surface $R$. The ideal triangulation $\lambda$ induces an ideal triangulation $\lambda_Q$ of $Q$ and an ideal triangulation $\lambda_R$ of $R$. Similarly, $\lambda'$ defines ideal triangulations $\lambda_Q'$ and $\lambda_R'$ of $Q$ and $R$. The two ideal triangulations $\lambda_R$ and $\lambda_R'$ coincide, while $\lambda_Q$ and $\lambda_Q'$ differ only by the choice of a diagonal for the square $Q$. 

By definition of local representations, $\rho_\lambda \col \T_\lambda^q \to \End(V_\lambda)$ is obtained by fusing together local representations $\rho_{\lambda_Q }\col \T_{\lambda_Q}^q \to \End(V_{\lambda_Q})$ and $\rho_{\lambda_R} \col \T_{\lambda_R}^q \to \End(V_{\lambda_R})$. By the Fusion Relation of Section~\ref{subsect:Switch}, it follows that $\rho_\lambda \circ \Phi_{\lambda\lambda'}^q$ is obtained by fusing together the representations $\rho_{\lambda_Q} \circ \Phi_{\lambda_Q\lambda_Q'}^q$ and $\rho_{\lambda_R} \circ \Phi_{\lambda_R\lambda_R'}^q = \rho_{\lambda_R}$ (since $\Phi_{\lambda_R\lambda_R'}^q$ is just the identity of $\T_{\lambda_R}^q = \T_{\lambda_R'}^q$). 
The property then immediately follows from the case of the square, which we just  analyzed, as well as from the fact that $\phi_{\lambda\lambda'}$ is obtained by fusing together $\phi_{\lambda_Q\lambda_Q'}$ and $\phi_{\lambda_R\lambda_R'}=\Id_{V_{\lambda_R}}$. This second property is a consequence of the fact that the non-quantum coordinate change isomorphisms $\Phi_{\lambda\lambda'}^1$ satisfy the Fusion Relation. 
\end{proof}

There is a converse to Proposition~\ref{prop:RepGivesWeights}. 

\begin{prop}
\label{prop:WeightsGiveRep}
Suppose that we are given a system of edge weights $x_i\in \C^*$  for every ideal triangulation $\lambda$ in such a way that, for any two ideal triangulations $\lambda$ and $\lambda'$,  the edge weights $(x_1, x_2, \dots, x_n)$ and $(x_1', x_2', \dots, x_n')$ respectively associated to $\lambda$ and $\lambda'$ are related by the property that:
$$
(x_1', x_2', \dots, x_n') = \phi_{\lambda\lambda'}(x_1, x_2, \dots, x_n).
$$
(In particular, $(x_1, x_2, \dots, x_n)$ is in the domain of $\phi_{\lambda\lambda'}$.)
In addition, let $h\in \C^*$ be an $N$--th root for the product $x_1x_2\dots x_n$ of the edge weights associated to an arbitrary ideal triangulation $\lambda_0$. 

Then, there exists a local representation $\rho= \{\rho_\lambda : \T_\lambda^q \to \End(V_\lambda) \}_{\lambda \in \Lambda(S)}$  of the quantum Teichm\"uller space $\T_S^q$ such that, for every ideal triangulation $\lambda$, the edge weights and central load classifying the local representation $\rho_\lambda$ as in Proposition~\ref{prop:ClassifyLocalRep} are exactly the given edge weights  $x_i$ and $N$--th root $h$. In addition, $\rho$ is unique up to isomorphism of local representations. 
\end{prop}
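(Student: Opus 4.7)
The plan is to construct a local representation $\rho_\lambda \col \T_\lambda^q \to \End(V_\lambda)$ for each ideal triangulation $\lambda$ using Proposition~\ref{prop:ClassifyLocalRep}, and then to verify the compatibility under change of triangulation by reducing to the case of a square via Lemma~\ref{lem:EnuffCheckDiagEx} and the Fusion Relation. The starting observation is that the product $x_1 x_2 \dots x_n$ is independent of $\lambda$: the element $X_1 X_2 \dots X_n$ is, up to a $q$--factor, the principal central element $H$, and its image in $\T_\lambda^1$ is preserved by the semi-classical coordinate change isomorphism $\Phi_{\lambda\lambda'}^1$, a routine check from the explicit formulas on a square that propagates to the general case via the Fusion Relation. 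Consequently, $h^N = x_1 x_2 \dots x_n$ holds for every $\lambda$, and Proposition~\ref{prop:ClassifyLocalRep} produces a local representation $\rho_\lambda$ with edge weights $x_i$ and central load $h$, unique up to isomorphism of local representations.

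By Lemma~\ref{lem:EnuffCheckDiagEx}, to show that the $\rho_\lambda$ assemble into a representation of $\T_S^q$, it suffices to check that $\rho_\lambda \circ \Phi_{\lambda\lambda'}^q$ makes sense and is isomorphic to $\rho_{\lambda'}$ whenever $\lambda$ and $\lambda'$ differ by a single diagonal exchange. Splitting $S$ along the four sides of the affected square decomposes $S$ as a fusion of a square $Q$ with a (possibly disconnected) surface $R$; the Fusion Relation of Section~\ref{subsect:Switch}, combined with the compatibility between fusion of surfaces and fusion of local representations from Section~\ref{subsect:Fusing}, reduces the verification to the case where $S$ itself is a square, for which the coordinate change is given by the explicit formulas of Section~\ref{subsect:Switch}.

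The main obstacle is checking that $\rho_\lambda \circ \Phi_{\lambda\lambda'}^q$ makes sense on the square, which amounts to verifying that $\rho_\lambda(1+qX_1)$ and $\rho_\lambda(1+qX_1^{-1})$ are invertible in $\End(V_\lambda)$. Because $\rho_\lambda(X_1)^N = x_1 \Id$, the endomorphism $\rho_\lambda(X_1)$ is diagonalizable with eigenvalues the $N$--th roots of $x_1$, so $\rho_\lambda(1+qX_1)$ is invertible if and only if $-q^{-1}$ is not an $N$--th root of $x_1$. A direct calculation using the hypothesis $q^N = (-1)^{N+1}$ gives $(-q^{-1})^N = -1$, so this condition reduces to $x_1 \neq -1$, which is precisely what it means for $(x_1, \dots, x_n)$ to lie in the domain of $\phi_{\lambda\lambda'}$; a symmetric manipulation involving $1+qX_1^{-1}$ then produces the two-sided quotient decomposition of each $\Phi_{\lambda\lambda'}^q(X_i')$ required for $\rho_\lambda \circ \Phi_{\lambda\lambda'}^q$ to be a genuine algebra homomorphism.

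Once $\rho_\lambda \circ \Phi_{\lambda\lambda'}^q$ is well-defined, the same quantum binomial computation used in the proof of Proposition~\ref{prop:RepGivesWeights} shows that its edge weights are $\phi_{\lambda\lambda'}(x_1, \dots, x_n) = (x_1', \dots, x_n')$ and its central load is still $h$; Proposition~\ref{prop:ClassifyLocalRep} then yields an isomorphism with $\rho_{\lambda'}$, establishing the required compatibility. Uniqueness of $\rho$ up to isomorphism of local representations follows directly from Proposition~\ref{prop:ClassifyLocalRep} applied to each $\rho_\lambda$ individually, since the classifying data is prescribed.
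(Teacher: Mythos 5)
Your overall architecture matches the paper's: establish that the total weight $x_1\cdots x_n$ is independent of $\lambda$, produce each $\rho_\lambda$ from Proposition~\ref{prop:ClassifyLocalRep}, reduce to diagonal exchanges via Lemma~\ref{lem:EnuffCheckDiagEx}, and isolate the affected square by fusion. Your explicit verification that $\rho_\lambda \circ \Phi_{\lambda\lambda'}^q$ ``makes sense'' (the eigenvalue argument for $\rho_\lambda(1+qX_1)$, including the check that $(-q^{-1})^N = -1$ so the obstruction is exactly $x_1 = -1$) is a legitimate and pleasant alternative to the paper's citation of \cite[Lem.~29]{BonLiu}, and it ties the algebraic invertibility condition directly to the domain condition on $\phi_{\lambda\lambda'}$.

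However, there is a gap in the final step. Knowing that $\rho_\lambda\circ\Phi_{\lambda\lambda'}^q$ sends each $X_i'{}^N$ to $x_i'\,\Id$ and $H'$ to $h\,\Id$ does \emph{not} by itself let you invoke Proposition~\ref{prop:ClassifyLocalRep} to produce an isomorphism with $\rho_{\lambda'}$: that proposition classifies \emph{local} representations, and $\rho_\lambda\circ\Phi_{\lambda\lambda'}^q$ is a priori just some $N^m$--dimensional representation of $\T_{\lambda'}^q$ (the original tensor splitting of $V_\lambda$ is adapted to the faces of $\lambda$, not of $\lambda'$, so there is no reason it is tensor-split with respect to the new faces, and a rep with the right scalar images could also be decomposable in unexpected ways). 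The paper closes this gap on the square by appealing to the representation theory of $\T^q_{\lambda_Q}$ from \cite[Thms.~21--22]{BonLiu}: every irreducible representation has dimension $N^2$, hence every $N^2$--dimensional representation is irreducible, and irreducibles and local representations of the square are classified by the same data, so $\rho_{\lambda_Q}\circ\Phi_{\lambda_Q\lambda_Q'}^q$ \emph{is} isomorphic to a local representation; only then can Proposition~\ref{prop:ClassifyLocalRep} (via Corollary~\ref{cor:LocalIsom}) be applied. Your write-up skips this irreducibility/dimension argument, so the step ``Proposition~\ref{prop:ClassifyLocalRep} then yields an isomorphism with $\rho_{\lambda'}$'' is unjustified as stated. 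You would need to insert an argument — either the dimension count from \cite{BonLiu}, or an explicit construction of the intertwiner on the square — showing that $\rho_{\lambda_Q}\circ\Phi_{\lambda_Q\lambda_Q'}^q$ is isomorphic to a local representation of $\T^q_{\lambda_Q'}$ before the classification can be invoked.
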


\begin{proof}
As a preliminary remark we note that, for any two ideal triangulations $\lambda$ and $\lambda'$, the products $x_1x_2 \dots x_n$ and $x_1'x_2'\dots x_n'$ of their respective edge weights are equal. This can easily seen by inspection of the formulas  for $\phi_{\lambda\lambda'}$ (see also \cite[Prop.~14]{Liu1} and Lemma~\ref{lem:PeriphLoad} below). Therefore, $h$ is an $N$--th root for the product $x_1x_2 \dots x_n$ of the edge weights associated to every ideal triangulation $\lambda$. 

For every ideal triangulation $\lambda$, Proposition~\ref{prop:ClassifyLocalRep} associates a local representation $\rho_\lambda : \T_\lambda^q \to \End(V_\lambda)$ to the edge weights $x_1$, $x_2$, \dots, $x_n$ and to the $N$--th root $h=\sqrt[N]{x_1x_2\dots x_n}$.

To prove that the $\rho_\lambda$ form a local representation of $\T_S^q$, Lemma~\ref{lem:EnuffCheckDiagEx} says that it suffices to show that $\rho_\lambda \circ \Phi_{\lambda\lambda'}$ makes sense and is isomorphic to $\rho_{\lambda'}$ whenever the ideal triangulations $\lambda$ and $\lambda'$ differ only by a diagonal exchange. As in the proof of Proposition~\ref{prop:RepGivesWeights},   split the surface $S$ open along the sides of the square where the diagonal exchange takes place to obtain a square $Q$ and a possibly disconnected surface $R$. The ideal triangulation $\lambda$ and $\lambda'$ respectively induce ideal triangulation $\lambda_Q$ and $\lambda_Q'$ of $Q$, and ideal triangulations $\lambda_R=\lambda_R'$ of $R$. By definition of local representations, $\rho_\lambda$ is obtained by fusing together a local representation $\rho_{\lambda_Q}$ of $\T_{\lambda_Q}^q$ and a local representation $\rho_{\lambda_R}$ of $\T_{\lambda_R}^q$.

Although everything could be done ``by hand'' (see for instance our explicit computations in \cite{BaiBon}), we will take advantage of the classification of irreducible representations of $\T_Q^q$ in \cite{BonLiu}. Theorems~21 and 22 of \cite{BonLiu} (straightforwardly extended to surfaces with boundary) and  Proposition~\ref{prop:ClassifyLocalRep}  of this paper show that irreducible and local representations of $\T_{\lambda_Q}^q$ have the same dimension $N^2$, and are classified by the same edge weights and the same central load. It follows that every $N^2$--dimensional representation of $\T_{\lambda_Q}^q$ is irreducible, and is isomorphic to a local representation. 

Because the weights $
(x_1', x_2', \dots, x_n') = \phi_{\lambda\lambda'}(x_1, x_2, \dots, x_n)
$
are finite and non-zero, \cite[Lemma~29]{BonLiu} shows that the representation $\rho_{\lambda_Q}\circ \Phi_{\lambda_Q\lambda_Q'}^q$ of $\T_{\lambda_Q}^q$ makes sense. It follows that the representation $\rho_{\lambda}\circ \Phi_{\lambda\lambda'}^q$ of $\T_{\lambda}^q$, obtained by fusing $\rho_{\lambda_Q}\circ \Phi_{\lambda_Q\lambda_Q'}^q$ and $\rho_{\lambda_R}\circ \Phi_{\lambda_R\lambda_R'}^q = \rho_{\lambda_R}$ together, makes sense. By the above observation, $\rho_{\lambda_Q}\circ \Phi_{\lambda_Q\lambda_Q'}^q$ is isomorphic to a local representation. Therefore, 
$\rho_{\lambda}\circ \Phi_{\lambda\lambda'}^q$ is isomorphic to a local representation. By Proposition~\ref{prop:RepGivesWeights},  $\rho_{\lambda}\circ \Phi_{\lambda\lambda'}^q$ is classified by $
\phi_{\lambda\lambda'}(x_1, x_2, \dots, x_n)= (x_1', x_2', \dots, x_n')  
$ and the central load $h$. Proposition~\ref{prop:ClassifyLocalRep} then shows that $\rho_{\lambda}\circ \Phi_{\lambda\lambda'}^q$ is isomorphic to $\rho_{\lambda'}$. 

Having proved that $\rho_{\lambda}\circ \Phi_{\lambda\lambda'}^q$ makes sense and is isomorphic to $\rho_{\lambda'}$ whenever $\lambda$ and $\lambda'$ differ by a diagonal exchange,  Lemma~\ref{lem:EnuffCheckDiagEx} shows that the $\rho_\lambda : \T_\lambda^q \to \End(V_\lambda)$ form a local representation $\rho$ of the quantum Teichm\"uller space $\T_S^q$.

The uniqueness of $\rho$ up to isomorphism is an immediate consequence of Proposition~\ref{prop:ClassifyLocalRep}.
\end{proof}

Propositions~\ref{prop:RepGivesWeights} and \ref{prop:WeightsGiveRep} can be combined in the following statement. 

\begin{thm}
\label{thm:LocalAndWeights}
Let $q$ be a primitive $N$--th root of $(-1)^{N+1}$. 
There is a one-to-one correspondence between:
\begin{enumerate}
\item isomorphism classes of local representations $\rho= \{\rho_\lambda : \T_\lambda^q \to \End(V_\lambda) \}_{\lambda \in \Lambda(S)}$;
\item data associating to each ideal triangulation $\lambda$  a system of edge weights $x_i\in \C^*$ and a choice of $N$--th root $h = \sqrt[N]{x_1x_2\dots x_n}$ in such a way that, for any two ideal triangulations $\lambda$ and $\lambda'$,  the edge weights $(x_1, x_2, \dots, x_n)$ and $(x_1', x_2', \dots, x_n')$ and the roots $h = \sqrt[N]{x_1x_2\dots x_n}$ and $h' = \sqrt[N]{x_1'x_2'\dots x_n'}$ respectively associated to $\lambda$ and $\lambda'$ are such that
$$
(x_1', x_2', \dots, x_n') = \phi_{\lambda\lambda'}(x_1, x_2, \dots, x_n).
$$
and $h'=h$. \qed
\end{enumerate}
\end{thm}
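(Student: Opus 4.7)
The plan is to verify that Propositions~\ref{prop:RepGivesWeights} and~\ref{prop:WeightsGiveRep} together establish the claimed bijection, by defining the two candidate maps, checking each is well-defined on isomorphism classes, and checking they are mutually inverse. First, I would define the map from (1) to (2) as follows. Given a local representation $\rho = \{\rho_\lambda\}_{\lambda \in \Lambda(S)}$ of $\T_S^q$, assign to each ideal triangulation $\lambda$ the edge weights $x_i \in \C^*$ and the central load $h \in \C^*$ produced by Lemma~\ref{lem:LocalRepInvariants} from the local representation $\rho_\lambda$. Proposition~\ref{prop:RepGivesWeights} guarantees that as $\lambda$ varies these data satisfy the required compatibility $(x_1', \dots, x_n') = \phi_{\lambda\lambda'}(x_1, \dots, x_n)$ and $h' = h$, so they belong to (2). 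This map descends to isomorphism classes because the invariants $x_i$ and $h$ are read directly from the scalars by which $\rho_\lambda(X_i^N)$ and $\rho_\lambda(H)$ act, which depend only on the isomorphism class of $\rho$ (and in fact, by Corollary~\ref{cor:LocalIsom}, an isomorphism need not even be tensor-split for this extraction to give the same numbers).

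Second, the map from (2) to (1) is exactly the construction of Proposition~\ref{prop:WeightsGiveRep}: given a compatible system $(x_i, h)$, that proposition produces a local representation $\rho$ of $\T_S^q$ whose $\lambda$-component realizes the prescribed edge weights $x_1, \dots, x_n$ and central load $h$, and does so uniquely up to isomorphism. One should verify that this map is also well-defined on the relevant equivalence, but since the output is explicitly produced up to isomorphism of local representations, there is nothing to check.

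Finally, I would check that the two maps are mutually inverse. Going from (2) to (1) and back to (2): the local representation constructed via Proposition~\ref{prop:WeightsGiveRep} is designed so that, in each ideal triangulation $\lambda$, Lemma~\ref{lem:LocalRepInvariants} recovers exactly the prescribed edge weights and central load, so the composition is the identity on (2). Going from (1) to (2) and back to (1): starting with an isomorphism class $[\rho]$, one extracts the invariants $(x_i, h)$ at every $\lambda$, and then Proposition~\ref{prop:WeightsGiveRep} applied to these invariants produces a local representation with the same data, which by the uniqueness clause of that proposition (ultimately resting on Proposition~\ref{prop:ClassifyLocalRep}) must be isomorphic to $\rho$.

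There is no genuine obstacle in this argument, since the hard work is already accomplished in Propositions~\ref{prop:RepGivesWeights} and~\ref{prop:WeightsGiveRep}; the only minor subtlety worth flagging is that the compatibility condition $h' = h$ is not an additional assumption on (2) but an automatic consequence of $x_1 x_2 \cdots x_n = x_1' x_2' \cdots x_n'$, a fact recorded in the preliminary remark of the proof of Proposition~\ref{prop:WeightsGiveRep}, so the two formulations of (2) (with or without the explicit equality of central loads) agree.
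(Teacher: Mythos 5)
Your proposal is correct and matches the paper's approach: the paper states the theorem with a \textup{\qed} immediately after noting that ``Propositions~\ref{prop:RepGivesWeights} and \ref{prop:WeightsGiveRep} can be combined in the following statement,'' treating it as a direct corollary exactly as you do. Your additional observations---that the invariants $x_i$, $h$ depend only on the isomorphism class, and that $h'=h$ is forced by the invariance of the product $x_1\cdots x_n$ under $\phi_{\lambda\lambda'}$ rather than being an extra constraint---are both accurate and consistent with the paper's reasoning.
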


We will now reinterpret Theorem~\ref{thm:LocalAndWeights} in a more geometric setting.

\section{Pleated surfaces}
\label{sect:Pleated}

There is a geometric situation where complex edge weights
for an ideal triangulation of a surface also occur, namely
the exponential shear-bend parameters of a pleated surface
with this ideal triangulation as pleating locus. 

In this section, we restrict attention to the case where the
boundary of $S$ is empty, namely where $S$ is obtained
by removing $p\geq 1$ punctures from a closed oriented surface
$\bar S$. 

 Let $\widetilde S$ be the universal
covering of $S$, let $\widetilde\lambda\subset \widetilde S$ be
the preimage of the edges of $\lambda$, and let $ \Isom (\HH^3)$
denote the group of orientation-preserving isometries of the hyperbolic 3--space $\HH^3$. A
\emph{pleated surface} with pleated along $\lambda$ is 
a pair $\bigl(
\widetilde f , r\bigr)$ consisting of  a continuous map
$\widetilde f \col \widetilde S \to \HH^3$ and of a group homomorphism
$r\col \pi_1(S) \to \Isom (\HH^3)$ such that:
\begin{enumerate}

\item $\widetilde f$
homeomorphically sends each component of
$\widetilde \lambda$
to a complete geodesic of $\mathbb
H^3$;

\item $\widetilde f$
homeomorphically sends the
closure of each component of
$\widetilde S-
\widetilde \lambda$ to an ideal
triangle in $\mathbb H^3$, namely
one whose three vertices are on
the sphere at infinity
$\partial_\infty \mathbb H^3$ of
$\mathbb H^3$;

\item $\widetilde f$ is
$r$--equivariant in the sense
that $\widetilde f(\gamma
\widetilde x) = r(\gamma)
\widetilde f(
\widetilde x)$ for every
$\widetilde x \in \widetilde S$
and $\gamma \in \pi_1(S)$. 
\end{enumerate}

If $\lambda_i$ is an edge of $\lambda$, separating two (possibly
equal) faces
$T_j$ and
$T_k$, lift $\lambda_i$ to a component $\widetilde\lambda_i$ of
$\widetilde\lambda$ separating two components $\widetilde T_j$ and
$\widetilde T_k$ of $\widetilde S - \widetilde \lambda$ respectively
lifting the interiors of $T_j$ and $T_k$. 

The closure of the two
ideal triangles $\widetilde f \bigl( \widetilde T_j \bigr)$ and 
$\widetilde f \bigl( \widetilde T_k \bigr)$ is an oriented wedge bent by
an angle of $\theta \in \R/2\pi\Z$; here we choose $\theta$ to be the external
dihedral angle of this wedge
along $\widetilde \lambda_i$, namely $\pi$ minus the internal dihedral angle, so that $\theta=\pi$ when the two ideal triangles $\widetilde f \bigl( \widetilde T_j \bigr)$ and 
$\widetilde f \bigl( \widetilde T_k \bigr)$ coincide. In addition, consider on
$\widetilde\lambda_i$ the respective projections $w_j$ and $w_k$ of the
vertices of the ideal triangles   $\widetilde f \bigl( \widetilde T_j
\bigr)$  and 
$\widetilde f \bigl( \widetilde T_k \bigr)$ that are not in
$\widetilde\lambda_i$. If we orient $\widetilde\lambda_i$ as part of the
boundary of the oriented triangle $\widetilde f \bigl( \widetilde T_j
\bigr)$, let the \emph{shear parameter} $t$ be the oriented distance from
$w_j$ to $w_k$ in  $\widetilde\lambda_i$. 

By definition, the complex number $x_i = \mathrm e^{t+ \mathrm i \theta}$
is the
\emph{exponential shear-bend parameter} of the pleated surface along the
edge $\lambda_i$. It is immediate that $x_i$ is independent of the
choices made in the definition. 

Note that, for a pleated surface 
$\bigl( \widetilde f , r\bigr)$,
the homomorphism $r\col \pi_1(S)
\rightarrow
\Isom(\HH^3)$ is
completely determined by the map 
$\widetilde f \col
\widetilde S \rightarrow \mathbb
H^3$. The map $\widetilde f$ adds
more data to $r$ as follows. Let
$A \subset S$ be the union of
small annulus neighborhoods of
all the punctures of $S$.  There
is a one-to-one correspondence
between the components of the
preimage $\widetilde A$ of
$A$ in $\widetilde S$ and the \emph{peripheral
subgroups} of
$\pi_1(S)$, namely the image subgroups
of the homomorphisms $\pi_1(A)
\rightarrow \pi_1(S)$ defined by
all possible choices of base
points and paths joining these
base points. For a component
$\widetilde A_\pi$ of
$\widetilde A$ corresponding to a
peripheral subgroup $\pi \subset
\pi_1(S)$, the images under
$\widetilde f$ of the triangles of
$\widetilde S - \widetilde
\lambda$ that meet
$\widetilde A_\pi$ all have a
vertex $\xi_\pi$ in common in
$\widehat {\mathbb C} =
\partial_\infty \mathbb H^3$, and
this vertex is fixed by $r(\pi)$.
Therefore, $\widetilde f$
associates to each peripheral
subgroup
$\pi$ of $\pi_1(S)$ a point
$\xi_\pi \in
\partial_\infty \mathbb H^3$
which is fixed under $r(\pi)$. In
addition this assignment is
\emph{$r$--equivariant} in the sense that
$\xi_{\gamma\pi\gamma^{-1}}=
r(\gamma) \xi_\pi$ for every
$\gamma \in \pi_1(S)$. 

By definition, an \emph{enhanced
homomorphism} $(r,
\{\xi_\pi\}_{\pi \in
\Pi})$ from
$\pi_1(S)$ to
$\Isom(\HH^3)$
consists of a group homomorphism
$r\col \pi_1(S) \rightarrow
\Isom(\HH^3)$
together with an
$r$--equivariant assignment of a
fixed point
$\xi_\pi \in \partial_\infty
\mathbb H^3$ to each peripheral
subgroup $\pi$ of $\pi_1(S)$.
Here $\Pi$ denotes the set of
peripheral subgroups of
$\pi_1(S)$. By abuse of notation,
we will often write $r$ instead
of 
$(r, \{\xi_\pi\}_{\pi \in
\Pi})$ of
$\pi_1(S)$.

Two such enhanced homomorphisms $(r,
\{\xi_\pi\}_{\pi \in
\Pi})$ and $(r',
\{\xi_\pi'\}_{\pi \in
\Pi})$ are \emph{conjugate} by $A\in \Isom(\HH^3)$
if $r'(\gamma) = A r(\gamma) A^{-1}$ for every
$\gamma\in \pi_1(S)$ and if $\xi_\pi' = A(\xi_\pi)$ for
every peripheral subgroup $\pi\in \Pi$. 

An enhanced group homomorphism $(r,
\{\xi_\pi\}_{\pi \in
\Pi})$ of
$\pi_1(S)$ \emph{realizes} the ideal triangulation $\lambda$ if it is associated to a pleated surface $\bigl(
\widetilde f , r\bigr)$ pleated along $\lambda$. It is \emph{peripherally generic} if it realizes every ideal triangulation $\lambda$ of $S$. The terminology is justified by the fact that this property is essentially equivalent to (although slightly weaker than) the fact that distinct peripheral subgroups $\pi\in \Pi$ are associated to distinct points $\xi_\pi \in \partial _\infty \HH^3$ in the enhancement; compare the proof of Lemma~\ref{lem:InjectivePeriphGen}.

\begin{prop}
\label{prop:Pleated}
Let $\lambda$ be an ideal triangulation for the surface
$S$, with $\partial S= \emptyset$. The map which, to a pleated surface pleated along $\lambda$, associates its exponential shear-bend coordinates induces a one-to-one correspondence between conjugacy classes of enhanced group homomorphisms $(r,
\{\xi_\pi\}_{\pi \in
\Pi})$ from 
$\pi_1(S)$ to
$\Isom(\HH^3)$ realizing $\lambda$ and systems of non-zero edge
weights $x_i \in \C^*$ for the
edges
$\lambda_i$ of $\lambda$.
\end{prop}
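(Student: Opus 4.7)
The plan is to construct an explicit inverse to the exponential shear-bend map by reassembling a pleated surface realizing any given $(x_i)\in(\C^*)^n$ as its shear-bend data. The fact that the map is well-defined on conjugacy classes, namely that each $x_i$ depends only on $(r,\{\xi_\pi\}_{\pi\in\Pi})$ and not on the chosen lifts $\widetilde T_j,\widetilde T_k$ of the two adjacent faces, is a routine check: both a deck transformation changing the lifts and a conjugation by an element of $\Isom(\HH^3)$ act as isometries that preserve the shear distance $t$ and the external dihedral angle $\theta$ between two adjacent ideal triangles in $\HH^3$.

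For the reconstruction, I would fix a base face $\widetilde T_0$ of $\widetilde S-\widetilde\lambda$ with a cyclic vertex ordering induced by the orientation. Since $\Isom(\HH^3)=\PSL_2(\C)$ acts simply transitively on ordered triples of distinct points of $\partial_\infty\HH^3=\widehat{\C}$, the conjugation freedom lets me normalize $\widetilde f(\widetilde T_0)$ to the standard ideal triangle with vertices $0$, $1$, $\infty$. Because $\widetilde S$ is simply connected, the dual graph of $\widetilde\lambda$ is a tree, so every other face $\widetilde T$ is reached from $\widetilde T_0$ by a unique sequence of adjacent faces $\widetilde T_0,\widetilde T_1,\dots,\widetilde T_k=\widetilde T$. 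At each step crossing $\widetilde\lambda_{i_j}$, I place $\widetilde f(\widetilde T_j)$ uniquely by requiring its shear and external dihedral angle with $\widetilde f(\widetilde T_{j-1})$ to be $\log|x_{i_j}|$ and $\arg x_{i_j}$; the geometric content of the shear-bend coordinates is precisely that these two real numbers pin down the relative position of two ideal triangles sharing an edge in $\HH^3$. The resulting $\widetilde f$ is continuous, sends edges of $\widetilde\lambda$ to complete geodesics, and sends the closure of each face to an ideal triangle.

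To extract the homomorphism $r$, for each $\gamma\in\pi_1(S)$ I would define $r(\gamma)\in\Isom(\HH^3)$ as the unique orientation-preserving isometry carrying the ordered ideal triangle $\widetilde f(\widetilde T_0)$ to $\widetilde f(\gamma\widetilde T_0)$; this uniqueness forces $r$ to be a group homomorphism, and the equivariance $\widetilde f\circ\gamma=r(\gamma)\circ\widetilde f$ follows because both sides are triangle-by-triangle reconstructions from the same shear-bend data that agree on the base face $\gamma\widetilde T_0$. For the enhancement, for each peripheral subgroup $\pi\in\Pi$ the faces of $\widetilde S-\widetilde\lambda$ meeting a lift $\widetilde A_\pi$ of an annular puncture neighborhood form a fan sharing a common combinatorial vertex over the puncture; their images under $\widetilde f$ then share a common ideal vertex $\xi_\pi\in\partial_\infty\HH^3$, namely the common endpoint of the geodesic images of the successive shared edges. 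The assignment $\pi\mapsto\xi_\pi$ is automatically $r$-equivariant, and $\xi_\pi$ is fixed by $r(\pi)$ since $\pi$ permutes the faces of this fan.

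By construction the exponential shear-bend coordinates of the reconstructed pleated surface recover the initial $(x_i)$, giving surjectivity; conversely, any realization of $\lambda$ with coordinates $(x_i)$ can, after conjugation normalizing $\widetilde f(\widetilde T_0)$, be rebuilt by the same rigid triangle-by-triangle recipe and hence coincides with the reconstruction, giving injectivity at the level of conjugacy classes. The main obstacle is verifying the existence and $r(\pi)$-invariance of the common ideal vertex $\xi_\pi$ for the reconstructed pleated surface: one must argue that finiteness and non-vanishing of every $x_i\in\C^*$ prevents any degeneration in the chain of ideal triangles around a puncture lift, so that consecutive triangles in this fan genuinely share the ideal vertex coming from the puncture. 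Everything else amounts to routine gluing and the simple transitivity of $\PSL_2(\C)$ on ordered triples in $\widehat{\C}$.
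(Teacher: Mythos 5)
Your proof is correct and essentially follows the standard developing-map argument that the paper refers to by citation (to Canary--Epstein--Green and to \cite{Bon97}) without spelling out; the paper's own ``proof'' is a one-line reference, so your write-up supplies more detail than the source. One small remark: the step you flag as ``the main obstacle'' --- the existence of the common ideal vertex $\xi_\pi$ for the reconstructed fan --- is not actually delicate. Crossing an edge $\widetilde\lambda_i$ of the fan produces a new ideal triangle that shares the entire geodesic $\widetilde f(\widetilde\lambda_i)$, hence both of its endpoints, with the previous one; in particular they share the endpoint corresponding to the puncture. By induction along the fan every triangle has this same ideal vertex, and this requires nothing beyond $x_i\in\C^*$ (which is what makes the placement of each successive triangle well-defined in the first place). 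The $r(\pi)$-invariance of $\xi_\pi$ then follows because elements of $\pi$ permute the faces of the fan, hence $r(\pi)$ permutes ideal triangles all sharing $\xi_\pi$. So that point is automatic rather than an obstacle; the rest of your argument (normalization of the base triangle by simple transitivity of $\PSL_2(\C)$, development along the dual tree of $\widetilde\lambda$, defining $r$ and checking it is a homomorphism via uniqueness, and injectivity on conjugacy classes by re-normalizing) is exactly the intended reconstruction.
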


\begin{proof}
Reconstructing a pleated surface from its shear-bend coordinates is completely elementary. See for instance \cite{CEG} or \cite{Bon97}, or compare the end of the proof of Lemma~\ref{lem:InjectivePeriphGen}. 
\end{proof}

In practice, it is not always easy to determine whether a general enhanced group homomorphism from $\pi_1(S)$ to $\Isom(\HH^3)$ realizes a given ideal triangulation. However, the following simple criterion is quite useful, since many (most?) group homomorphisms  $r\col \pi_1(S) \to \Isom(\HH^3)$ that have some geometric significance are injective. 

\begin{lem}
\label{lem:InjectivePeriphGen}
Every enhanced homomorphism $(r,
\{\xi_\pi\}_{\pi \in
\Pi})$ where $r\col \pi_1(S) \to \Isom(\HH^3)$ is injective is peripherally generic, namely realizes every ideal triangulation $\lambda$. 
\end{lem}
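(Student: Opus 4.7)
The plan is to construct the pleated surface face by face, which reduces the whole statement to the algebraic claim that distinct peripheral subgroups must be sent to distinct points of $\partial_\infty \HH^3$.

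First I would fix an ideal triangulation $\lambda$ and attempt to define the map $\widetilde f\col \widetilde S \to \HH^3$ triangle by triangle. For each lift $\widetilde T\subset \widetilde S$ of a face of $\lambda$, its three ideal vertices canonically correspond to three peripheral subgroups $\pi_1, \pi_2, \pi_3 \in \Pi$ (the stabilizers in $\pi_1(S)$ of the three components of $\widetilde A$ in which these vertices lie), and we would send $\widetilde T$ homeomorphically onto the ideal triangle in $\HH^3$ with vertices $\xi_{\pi_1}, \xi_{\pi_2}, \xi_{\pi_3}$. The $r$-equivariance of $\pi \mapsto \xi_\pi$ matches up the choices made on different lifts under deck transformations, and agreement along shared edges is automatic once vertex data is fixed, because two adjacent triangles share a geodesic of $\HH^3$ with the same two endpoints at infinity. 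The construction goes through precisely when, for each lifted face, the three points $\xi_{\pi_j}$ are pairwise distinct, so that they span a genuine ideal triangle of $\HH^3$.

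The heart of the argument is therefore to show that the assignment $\pi \mapsto \xi_\pi$ is injective on the whole of $\Pi$. Suppose, for contradiction, that $\xi_\pi = \xi_{\pi'} = \xi$ for two distinct peripheral subgroups $\pi$ and $\pi'$. Both $r(\pi)$ and $r(\pi')$ then fix $\xi$, so the subgroup $\Gamma = r(\langle \pi, \pi'\rangle)$ lies in the stabilizer of $\xi$ in $\Isom(\HH^3) \cong \PSL_2(\C)$, which is a Borel subgroup isomorphic to $\C^* \ltimes \C$ and hence solvable. By injectivity of $r$ the subgroup $\langle \pi, \pi'\rangle \subset \pi_1(S)$ is isomorphic to $\Gamma$ and is therefore also solvable.

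At this point I would invoke two standard facts about the topology of the punctured surface $S$: first, because $S$ has at least one puncture and empty boundary, $\pi_1(S)$ is a free group, and the only solvable subgroups of a free group are trivial or infinite cyclic; second, the generator of a peripheral subgroup is a simple loop around a puncture, which is a primitive element of this free group, so every peripheral subgroup is maximal among cyclic subgroups of $\pi_1(S)$. Combining these, $\langle \pi, \pi'\rangle$ is a single nontrivial cyclic subgroup containing both $\pi$ and $\pi'$, and the maximal-cyclicity forces $\pi = \pi'$, the desired contradiction. The main technical point to verify carefully is precisely this maximal cyclicity (equivalently malnormality) of peripheral subgroups in a punctured surface group; everything else reduces either to the standard description of point-stabilizers in $\PSL_2(\C)$ or to the classical reconstruction of a pleated surface from its vertex data, as in \cite{CEG, Bon97}.
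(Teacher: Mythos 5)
Your proof is correct and is essentially the paper's argument: both reduce the construction of the pleated surface to showing that distinct peripheral subgroups (in particular the two attached to the ends of a common edge lift) have distinct fixed points $\xi_\pi\neq\xi_{\pi'}$ in $\partial_\infty\HH^3$, and both deduce this from injectivity of $r$, solvability of point-stabilizers in $\Isom(\HH^3)\cong\PSL_2(\C)$, and freeness of $\pi_1(S)$. Where the paper simply asserts that $\pi$ and $\pi'$ generate a free subgroup of rank $2$ and invokes that a solvable group contains no such subgroup, you instead pass to ``a solvable subgroup of a free group is cyclic'' and then use primitivity (maximal cyclicity) of peripheral elements to force $\pi=\pi'$; this makes explicit exactly the ingredient the paper's rank-$2$ assertion relies on implicitly, and yields the slightly stronger conclusion that $\pi\mapsto\xi_\pi$ is globally injective.
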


\begin{proof} If
$\widetilde\lambda_i \subset \widetilde S$ is the
lift of an edge $\lambda_i$ of $\lambda$, its two
ends each define two parabolic subgroups $\pi$,
$\pi'\in \Pi$. We claim that the two fixed points
$\xi_\pi$, $\xi_{\pi'} \in \partial_\infty \HH^3$ of $r(\pi)$ and
$r(\pi')$ are distinct. Indeed, $\pi$ and
$\pi'$ generate a free subgroup $F$ of rank 2 in
$\pi_1(S)$. On the other hand, the stabilizer of a point
 $x \in \partial_\infty\HH^3$ in $\Isom(\HH^3)$  is solvable, and consequently cannot contain any free subgroup of rank $>1$. Since $r$ is injective, this proves that $r(\pi)$ and $r(\pi')$ cannot fix the same point  $x \in \partial_\infty\HH^3$. 

Once we have this property, constructing a pleated surface $\bigl ( \widetilde f, r \bigr)$ realizing $\lambda$ is immediate. First define $\widetilde f$ on each $\widetilde \lambda_i$ as above, by sending it to the geodesic of $\HH^3$ joining the  two fixed points $\xi_\pi \not = \xi_{\pi'}\in \partial_\infty\HH^3$. Then continuously extend $\widetilde f$ to $\widetilde S$ by sending each component $T$ of $\widetilde S - \widetilde \lambda$ to the unique totally geodesic triangle bounded in $\HH^3$ by the three geodesics so associated to the boundary components of $T$. 
\end{proof}

Note that an injective group homomorphism $r\col \pi_1(S) \to \Isom(\HH^3)$ admits at most $2^p$ enhancements (where $p$ is the number of punctures of $s$), and exactly one in the geometrically important case where $r$ sends each peripheral subgroup of $\pi_1(S)$  to a parabolic subgroup of $\Isom(\HH^3)$. Indeed, an infinite cyclic subgroup of $\Isom(\HH^3)$ fixes exactly one point when it is parabolic, and two points when it is loxodromic or elliptic. 

\begin{prop}
\label{prop:PleatedCoordChang}
Let $(r, \{\xi_\pi\}_{\pi \in \Pi})$ be an enhanced homomorphism from $\pi_1(S)$ to $\Isom(\HH^3)$ which is peripherally generic, namely realizes every ideal triangulation. In particular, for any two ideal triangulations $\lambda$ and $\lambda'$, it associates weights $x_i\in \C^*$ to the edges of $\lambda$, and weights $x_{i'}'\in \C^*$ to the edges of $\lambda'$. Then, 
$$
(x_1', x_2', \dots, x_n') = \phi_{\lambda\lambda'}(x_1, x_2, \dots, x_n).
$$
where $\phi_{\lambda\lambda'}$ is the rational map associated to the non-quantum coordinate change isomorphism $\Phi_{\lambda\lambda'}^1 \col \C(X_1', \dots, X_n') \to \C(X_1, \dots, X_n)$. 
\end{prop}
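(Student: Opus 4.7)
The plan is to reduce to a single diagonal exchange inside a square and then to a direct cross-ratio computation in $\HH^3$. First, by the Harer-Penner-Hatcher theorem cited in the proof of Proposition~\ref{prop:RepGivesWeights}, any two ideal triangulations of $S$ are connected by a finite sequence of diagonal exchanges. Since the non-quantum coordinate change isomorphisms $\Phi^1_{\lambda\lambda'}$ satisfy the Composition Relation of Section~\ref{subsect:Switch}, the rational maps $\phi_{\lambda\lambda'}$ satisfy $\phi_{\lambda\lambda''}=\phi_{\lambda'\lambda''}\circ\phi_{\lambda\lambda'}$. The shear-bend weights are intrinsically attached to the enhanced homomorphism and to the choice of ideal triangulation, so if the formula $(x_1',\dots,x_n')=\phi_{\lambda\lambda'}(x_1,\dots,x_n)$ holds whenever $\lambda$ and $\lambda'$ differ by a diagonal exchange, it follows in general by composition.

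Second, I would mimic the fusion strategy of the proof of Proposition~\ref{prop:RepGivesWeights}. Splitting $S$ open along the four sides of the quadrilateral where the diagonal exchange takes place yields a square $Q$ and a possibly disconnected surface $R$, with induced triangulations $\lambda_Q, \lambda_Q'$ of $Q$ (differing by the diagonal exchange) and $\lambda_R=\lambda_R'$ of $R$. The shear-bend parameter of an edge $\lambda_i$ of $\lambda$ depends only on the two ideal triangles adjacent to it in the pleated surface, so the weights on edges disjoint from $Q$ are unaffected by the move, while the change in weights on the five edges inside or on the boundary of $Q$ is governed by the positions in $\widehat \C = \partial_\infty \HH^3$ of the four ideal vertices of the quadrilateral. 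By the Fusion Relation for $\Phi^1_{\lambda\lambda'}$, the rational map $\phi_{\lambda\lambda'}$ is obtained by fusing $\phi_{\lambda_Q\lambda_Q'}$ with the identity on the edges of $R$, so the proposition reduces to the case $S = Q$.

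Third, for the square I would carry out a direct cross-ratio computation. Up to the action of $\Isom(\HH^3)$ one can place the four ideal vertices of $Q$ at $0, 1, \infty$, and a fourth point $\xi \in \widehat \C$. Classical formulas (see for instance \cite{Bon97, CEG}) identify $\xi$ with a simple rational expression in the exponential shear-bend parameter $x_1 = \E^{t+\I\theta}$ of the diagonal $\lambda_1$, and express each of the shear-bend parameters $x_2, x_3, x_4, x_5$ of the four boundary edges in terms of $\xi$ together with the data of their second adjacent triangle inherited from the fusion step. The analogous formulas for the new diagonal $\lambda_1'$ and the same four boundary edges, now paired with the opposite inner triangle, then yield the five equalities $x_1'=x_1^{-1}$, $x_2'=(1+x_1)x_2$, $x_3'=(1+x_1^{-1})^{-1}x_3$, $x_4'=(1+x_1)x_4$, $x_5'=(1+x_1^{-1})^{-1}x_5$, which exactly match the formulas for $\Phi^1_{\lambda\lambda'}$ displayed in Section~\ref{sect:QTS}.

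The main obstacle will be the bookkeeping in the third step: the conventions for the orientation used to measure the shear $t$, for external versus internal dihedral angle $\theta$, and for the cross-ratio identifying $x_1$ with $\xi$, must all be aligned with the conventions implicit in the formulas of Section~\ref{sect:QTS}. The underlying geometric computation is classical and elementary, but the combinatorial reduction via fusion and the careful matching of conventions are what make the argument work uniformly, rather than only for a simply connected model.
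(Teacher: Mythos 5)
Your plan is sound and matches the intended argument: the paper's own ``proof'' is a one-line deferral to \cite{Fok} and \cite[\S2]{Liu1}, and the reduction you outline (compose down to a single diagonal exchange, localize to the square via the Fusion Relation for $\Phi^1$, then do the cross-ratio calculation with vertices normalized to $0,1,\infty,\xi$) is precisely what those references carry out. The only caveats are bookkeeping ones that you already flag yourself: the composition order $\phi_{\lambda\lambda''}=\phi_{\lambda'\lambda''}\circ\phi_{\lambda\lambda'}$ (which you state correctly) and the sign/orientation conventions in the cross-ratio identification of $x_1$ with $\xi$, which must be matched to the explicit $\Phi^1_{\lambda\lambda'}$ formulas displayed for the square.
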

\begin{proof} This is a simple computation. See for instance \cite{Fok} or \cite[\S2]{Liu1}.
\end{proof}

Combining Propositions~\ref{prop:Pleated} and \ref{prop:PleatedCoordChang} immediately gives:
\begin{thm}
\label{thm:EnhHomClass}
There is a one-to-one correspondence between:
\begin{enumerate}
\item conjugation classes of enhanced homomorphisms $(r, \{\xi_\pi\}_{\pi \in \Pi})$  from $\pi_1(S)$ to $\Isom(\HH^3)$ that are peripherally generic;
\item data associating to each ideal triangulation $\lambda$  a system of edge weights $x_i\in \C^*$  in such a way that, for any two ideal triangulations $\lambda$ and $\lambda'$,  the edge weights $(x_1, x_2, \dots, x_n)$ and $(x_1', x_2', \dots, x_n')$  respectively associated to $\lambda$ and $\lambda'$ are such that
$$
(x_1', x_2', \dots, x_n') = \phi_{\lambda\lambda'}(x_1, x_2, \dots, x_n).
$$
\vskip -\belowdisplayskip
\vskip -\baselineskip
\qed
\end{enumerate}

\end{thm}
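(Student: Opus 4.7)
The plan is to assemble Theorem~\ref{thm:EnhHomClass} by gluing Proposition~\ref{prop:Pleated} and Proposition~\ref{prop:PleatedCoordChang} together, triangulation by triangulation. The two directions of the correspondence are essentially already in hand; what remains is to verify that the data produced in each direction is globally coherent across all ideal triangulations, and that the resulting maps are mutual inverses.

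For the forward direction, I start with a peripherally generic enhanced homomorphism $(r,\{\xi_\pi\}_{\pi\in\Pi})$. By definition, for every ideal triangulation $\lambda$ there exists a pleated surface $(\widetilde f_\lambda, r)$ realizing $\lambda$, and Proposition~\ref{prop:Pleated} then assigns to $\lambda$ a well-defined system of exponential shear-bend parameters $x_i\in\C^*$ on the edges of $\lambda$. Conjugate enhanced homomorphisms give conjugate pleated surfaces and hence the same edge weights, so the assignment descends to conjugation classes. The required transformation rule $(x_1',\dots,x_n') = \phi_{\lambda\lambda'}(x_1,\dots,x_n)$ is then exactly the content of Proposition~\ref{prop:PleatedCoordChang}.

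For the reverse direction, I take a family of edge weight systems satisfying the $\phi_{\lambda\lambda'}$-compatibility. Fix any ideal triangulation $\lambda_0$; by Proposition~\ref{prop:Pleated}, the weights $(x_1,\dots,x_n)$ associated to $\lambda_0$ come from a pleated surface $(\widetilde f, r)$, unique up to conjugation in $\Isom(\HH^3)$, and this pleated surface determines an enhanced homomorphism $(r,\{\xi_\pi\}_{\pi\in\Pi})$. I then have to check that this enhanced homomorphism is peripherally generic, i.e.\ realizes every ideal triangulation $\lambda$, and not merely $\lambda_0$. The key step here is the following: by Proposition~\ref{prop:PleatedCoordChang} applied to any realized $\lambda$, the weights induced by $(r,\{\xi_\pi\})$ on $\lambda$ would have to be $\phi_{\lambda_0\lambda}^{-1}$ applied to the weights on $\lambda_0$, and by hypothesis these are exactly the prescribed weights $(x_1^\lambda,\dots,x_n^\lambda)\in(\C^*)^n$. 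So what I really must verify is an a~priori realizability statement: that the four endpoints at infinity attached to the two peripheral subgroups at the two ends of any lifted edge $\widetilde\lambda_i$ of any $\lambda$ are all distinct, so that the ideal triangles and bending data can actually be drawn in $\HH^3$. This is the main obstacle in the argument.

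To get past it, I would argue as in the proof of Lemma~\ref{lem:InjectivePeriphGen}, but using the finiteness and non-vanishing of the weights $x_i^\lambda$ as a substitute for injectivity of $r$: if two such fixed points coincided, the shear-bend parameter along the corresponding edge could not be a well-defined element of $\C^*$, contradicting the hypothesis that $(x_1^\lambda,\dots,x_n^\lambda)$ lies in $(\C^*)^n$ and in the domain of all relevant $\phi_{\lambda\lambda'}$. Concretely, I build the pleated realization of $\lambda$ directly: on each lift $\widetilde\lambda_i$, take the geodesic joining the two (necessarily distinct) fixed points of the adjacent peripheral subgroups; then fill in each complementary triangle by the unique ideal triangle with the prescribed vertices. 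The $r$-equivariance is automatic, and by construction its exponential shear-bend parameters are the given $x_i^\lambda$. Finally, the two constructions are mutual inverses: starting from an enhanced homomorphism, extracting weights, and rebuilding a pleated surface recovers the original $(r,\{\xi_\pi\})$ up to conjugation by the uniqueness clause of Proposition~\ref{prop:Pleated}; starting from a compatible weight system, building $(r,\{\xi_\pi\})$ and re-reading weights via Proposition~\ref{prop:PleatedCoordChang} gives back the original weights on $\lambda_0$ and hence, by $\phi_{\lambda_0\lambda}$-compatibility, on every $\lambda$.
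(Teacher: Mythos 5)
Your overall structure mirrors the paper's own (one-line) proof: assemble the correspondence from Proposition~\ref{prop:Pleated} (the bijection for a fixed $\lambda$) and Proposition~\ref{prop:PleatedCoordChang} (the transformation rule). You also correctly locate the one nontrivial step, namely showing that the enhanced homomorphism built from $\lambda_0$ is peripherally generic, i.e.\ realizes \emph{every} $\lambda$. However, your argument for that step is circular as written. You claim that if two fixed points $\xi_\pi$, $\xi_{\pi'}$ at the ends of some lifted edge $\widetilde\lambda_i$ of $\lambda$ coincided, then ``the shear-bend parameter along the corresponding edge could not be a well-defined element of $\C^*$, contradicting the hypothesis.'' But the numbers $(x_1^\lambda,\dots,x_n^\lambda)$ provided by hypothesis~(2) are \emph{abstract} weights, obtained by feeding the $\lambda_0$--weights through the rational maps $\phi_{\lambda_0\lambda}$; they are not a priori the geometric shear-bend parameters of $(r,\{\xi_\pi\})$ along the edges of $\lambda$. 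That identification is exactly Proposition~\ref{prop:PleatedCoordChang}, which already presupposes the realizability you are trying to establish.

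The fix is to propagate realizability along a chain of diagonal exchanges $\lambda_0 = \lambda^{(0)}, \lambda^{(1)}, \dots, \lambda^{(l)} = \lambda$ and to pin down a concrete numerical coincidence. Suppose $\lambda^{(k)}$ is realized, and consider the exchange to $\lambda^{(k+1)}$ inside a square with old diagonal weight $X_1$. The new diagonal degenerates exactly when the two opposite vertices of the square are sent to the same point of $\partial_\infty\HH^3$; normalizing the endpoints of the old diagonal to $0$ and $\infty$, one checks that this occurs precisely when $X_1 = -1$ (zero shear, bending angle $\pi$, the two ideal triangles coinciding). But $X_1 = -1$ forces the non-quantum formulas for the side weights, $(1+X_1)X_j$ and $(1+X_1^{-1})^{-1}X_j$, to produce $0$ or $\infty$, so it is excluded both by the requirement that the $\lambda^{(k+1)}$--weights lie in $(\C^*)^n$ and by the requirement that $(x_1,\dots,x_n)$ lie in the domain of $\phi_{\lambda^{(k)}\lambda^{(k+1)}}$. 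Hence $\lambda^{(k+1)}$ is realized, and Proposition~\ref{prop:PleatedCoordChang} then identifies its induced weights with the prescribed ones. Induction along the chain, together with the uniqueness in Proposition~\ref{prop:Pleated}, completes the reverse direction; the rest of your argument for the two maps being mutually inverse then goes through unchanged.
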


Given an enhanced homomorphism  $(r,
\{\xi_\pi\}_{\pi \in
\Pi})$ realizing the ideal triangulation $\lambda$, consider the edge
weights $x_i \in \C^*$  associated to $(r,
\{\xi_\pi\}_{\pi \in
\Pi})$ by Proposition~\ref{prop:Pleated}. Define the
\emph{total peripheral load} of $(r,
\{\xi_\pi\}_{\pi \in
\Pi})$ as the product
$$h_r = x_1\dots x_n \in \C^*.$$

It turns out that $h_r$ is completely determined by $(r,
\{\xi_\pi\}_{\pi \in
\Pi})$, and in particular is independent of the ideal
triangulation $\lambda$. This could be proved using Proposition~\ref{prop:PleatedCoordChang}, but here is a more geometric interpretation which also explains the terminology. Since the fundamental
group $\pi_1(S)$ is free, we can lift 
$r\col
\pi_1(S)
\to
\Isom(\HH^3) = \mathrm{PSL}_2(\C) $ to a group homomorphism
$\hat r \col
\pi_1(S) \to \mathrm{SL}_2(\C)$. Pick a small loop
$\gamma_k$ in $S$ going counterclockwise around the
$k$--th puncture
$v_k$. For an arbitrary choice of base point, the
class of $\gamma_k$ in $\pi_1(S)$ generates a peripheral
subgroup
$\pi_k\in\Pi$. In particular, $r(\gamma_k)$ fixes the point
$\xi_{\pi_k} \in \partial _\infty \HH^3 = \mathbb{CP}^1$
specified by the enhancement of $r$. As a consequence
$\xi_{\pi_k}\in \mathbb{CP}^1$, considered as a complex line in $\C^2$,  is contained in an eigenspace of
$\hat r(\gamma_k) \in \mathrm{SL}_2(\C)$ corresponding to
some eigenvalue $a_k\in \C$. 

\begin{lem}
\label{lem:PeriphLoad}
The total peripheral load of the homomorphism $(r,
\{\xi_\pi\}_{\pi \in
\Pi})$ from $\pi_1(S)$ to  $\Isom(\HH^3) = \mathrm{PSL}_2(\C) $  is equal to
 $$h_r = (-1)^p a_1^{-1}  a_2^{-1}  \dots a_p^{-1} $$ 
where, for an arbitrary lift $\hat r$ of $r$ to
$\mathrm{SL}_2(\C)$, the complex numbers
$a_k$ are associated as above to the $p$ punctures $v_k$ of
the surface $S$. 
\end{lem}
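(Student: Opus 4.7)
My plan is to compute the product $h_r = x_1 \cdots x_n$ of exponential shear--bend parameters by grouping the factors according to the $p$ punctures of $S$. Each edge $\lambda_i$ of $\lambda$ has two endpoints at (possibly equal) punctures, so the product
$$
\prod_{k=1}^p \Bigl( \prod_{\text{edges at } v_k} x_i \Bigr)
$$
 counts each $x_i$ exactly twice, and therefore equals $h_r^2$. The goal is then to show that the inner product around each puncture $v_k$ equals $a_k^{-2}$ up to a sign of $-1$, after which taking square roots and pinning down the global sign yields the desired formula.

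To compute the fan product at $v_k$, I would pass to the universal cover and normalize coordinates on $\HH^3$ so that $\xi_{\pi_k} = \infty$ in the upper half-space model. The lifts of the edges of $\lambda$ incident to $v_k$ then form an infinite fan of vertical geodesics with one endpoint at $\infty$ and the other at a biinfinite sequence of points $(z_l)_{l\in\Z}$ in $\C$, cyclically ordered along any horocycle centered at $\infty$; if $d_k$ denotes the number of edge-ends at $v_k$, then one fundamental domain for $\pi_k = \langle \gamma_k \rangle$ consists of $d_k$ consecutive triangles, and $\sum_k d_k = 2n$. A lift $\hat r(\gamma_k) \in \mathrm{SL}_2(\C)$ that fixes the complex line corresponding to $\infty$ is upper triangular with diagonal entries $a_k, a_k^{-1}$, hence acts on $\C$ as an affine map $z \mapsto a_k^2 z + b$, so that $z_{l+d_k} = a_k^2 z_l + b$.

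The key algebraic step is to express the exponential shear--bend parameter $x_{i_l}$ along the edge from $\infty$ to $z_l$ as the (complex) cross-ratio of the four ideal vertices $\infty, z_{l-1}, z_l, z_{l+1}$; since one of these four points is $\infty$, this cross-ratio reduces to a ratio of differences of the $z_l$. Writing $w_l = z_l - z_{l-1}$, the product over a fundamental domain telescopes:
$$
\prod_{l=0}^{d_k - 1} x_{i_l} \;=\; \varepsilon_k \prod_{l=0}^{d_k-1} \frac{w_l}{w_{l+1}} \;=\; \varepsilon_k \frac{w_0}{w_{d_k}} \;=\; \varepsilon_k \, a_k^{-2},
$$
where the last equality uses $w_{d_k} = a_k^2 w_0$ coming from the equivariance, and $\varepsilon_k \in \{\pm 1\}$ is a sign depending only on conventions and on $d_k$. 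Multiplying over all $p$ punctures gives $h_r^2 = \bigl(\prod_k \varepsilon_k\bigr) \prod_k a_k^{-2}$, so $h_r = \varepsilon \prod_k a_k^{-1}$ for a single sign $\varepsilon$.

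The main obstacle I foresee is not the structural computation, which is a straightforward telescoping argument, but the careful bookkeeping of signs needed to identify $\varepsilon$ with $(-1)^p$. This requires reconciling three different sign conventions: the definition from \S\ref{sect:Pleated} of $\theta$ as the \emph{external} dihedral angle (with $\theta = \pi$ when the two adjacent ideal triangles coincide), the sign in the cross-ratio formula used to express $x_{i_l}$, and the choice of eigenvalue $a_k$ rather than $a_k^{-1}$ (which depends on which eigenspace of $\hat r(\gamma_k)$ is distinguished by $\xi_{\pi_k}$). To pin the sign down I would check the formula against a canonical test case, such as a complete Fuchsian uniformization with all punctures parabolic, where $a_k = \pm 1$ and the product of shear parameters around an ideal triangulation can be computed directly; this normalization fixes $\varepsilon = (-1)^p$ and completes the proof.
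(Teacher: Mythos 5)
Your overall strategy matches the paper's: compute the product of shear--bend parameters in the fan around each puncture $v_k$, show it equals $a_k^{-2}$ (your cross-ratio telescope is a rephrasing of the paper's observation that this fan product is the inverse multiplier of $r(\gamma_k)$ at $\xi_{\pi_k}$), multiply over punctures using the fact that each edge has two ends to get $h_r^2 = \prod_k a_k^{-2}$, and then pin the sign by specializing to a Fuchsian representation. One small point before extracting the square root: from $h_r^2 = \bigl(\prod_k \varepsilon_k\bigr)\prod_k a_k^{-2}$ you can only conclude $h_r = \varepsilon \prod_k a_k^{-1}$ with $\varepsilon \in \{\pm1\}$ once you know $\prod_k \varepsilon_k = 1$; this does hold, because each $\varepsilon_k$ is a product of $d_k$ copies of the same per-edge convention sign and $\sum_k d_k = 2n$ is even, but it needs to be noted.

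The genuine gap is in the last step. You claim that specializing to a cusped Fuchsian uniformization, where each $a_k = \pm 1$, ``fixes $\varepsilon = (-1)^p$ and completes the proof.'' But in that case the shear coordinates only tell you $h_r = 1$ (all $x_i>0$ and $h_r^2=1$), so $\varepsilon = \prod_k a_k$, and the lemma reduces to showing that for a Fuchsian representation and an arbitrary lift $\hat r$ to $\mathrm{SL}_2(\R)$ the number of peripheral generators $\gamma_k$ with $\hat r(\gamma_k)$ of negative trace has the same parity as $p$. This parity statement is not read off from the shear coordinates, and it is the real content of the paper's proof: the paper establishes it by an inductive argument over a pants decomposition (in each pair of pants exactly $0$ or $2$ of the three boundary curves have positive-trace lifts, and summing over all pants while noting each interior pants curve is counted twice shows the number of peripheral $\gamma_k$ with positive trace is even). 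Your proposal supplies no substitute for this argument, and checking a single example surface would not settle the sign for every genus and every $p$. You also implicitly rely on $\varepsilon$ being independent of $r$ within the Fuchsian locus; the paper invokes continuity to justify the specialization, and you should make this explicit.
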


\begin{proof}
By definition of $a_k$, the derivative of $r(\gamma_k)$ at
the fixed point $\xi_{\pi_k}\in \mathbb{CP}^1$ is just  the complex multiplication
by $a_k^{2}$. If $x_{i_1}$, $x_{i_2}$, \dots, $x_{i_l}$ are
the shear-bend parameters of the edges of $\lambda$ that
are adjacent to the puncture $v_k$, counted with
multiplicities when both ends of an edge lead to $v_k$, it
follows from the definition of the shear-bend parameters
that this derivative is also equal to $x_{i_1}^{-1}x_{i_2}^{-1}
\dots x_{i_l}^{-1}$. Therefore, $x_{i_1}x_{i_2}
\dots x_{i_l} = a_k^{-2} $.

Considering all punctures, this proves that $$a_1^{-2} a_2^{-2} \dots a_p^{-2} = x_1^{2}
x_2^{2} \dots x_n^{2} = h_r^{2}$$ since every
edge of $\lambda$ has two ends. Therefore $h_r = \pm a_1^{-1} a_2^{-1} 
\dots a_p^{-1} $. 

We now  only need to determine the sign $\pm$. By
continuity, this sign is independent of the shear-bend
parameters $x_1$,
$x_2$, \dots, $x_n$ defining the enhanced homomorphism
$r$. We can therefore consider the case where the $x_i$ are
positive real. In this case, $r$ has discrete
image and, after conjugation, is valued
in $\mathrm{PSL}_2(\R) = \Isom(\HH^2)$. Pick a
decomposition of the surface $S$ into pairs of pants. For
each pair of pants $P$ in the decomposition, consider three
closed curves $\alpha_1$, $\alpha_2$ and
$\alpha_3$ going around the punctures of the interior of
$P$; an easy computation shows that exactly 0 or 2 of their
images 
$\hat r(\alpha_i) \in \mathrm{SL}_2(\R)$ have positive
traces (since the space of hyperbolic pairs of pants is connected, it suffices to perform the computation on a single example). By induction on the number of pairs of pants in the
decomposition of $S$, we conclude that the number of
$\hat r(\gamma_k)$ with positive trace is even. In other
words, the number of positive $a_k$ is even. Since $h_r$ is
positive in this case, it follows that the sign
$\pm$ is  equal to $(-1)^p$. 
\end{proof}

Combining the purely algebraic Proposition~\ref{prop:ClassifyLocalRep} and the purely geometric Proposition~\ref{prop:Pleated}, we obtain the
following rephrasing of the classification of local
representations, which combines the two points of view.

\begin{prop}
\label{prop:LocalRepEnhanced}
Let $\lambda$ be an ideal triangulation for the surface
$S$, with empty boundary.
Up to isomorphism of local representations, a local
representation
$
\rho\col \T_\lambda^q \to  \End (V_1 \otimes \dots
\otimes V_m)
$
is classified by  an enhanced homomorphism $(r,
\{\xi_\pi\}_{\pi \in
\Pi})$ from
$\pi_1(S)$ to
$\Isom(\HH^3)$ realizing $\lambda$, together with an $N$--th root $h=
\sqrt[N]{h_r}$ of the total peripheral load $h_r$ of $(r,
\{\xi_\pi\}_{\pi \in
\Pi})$. 

Conversely, any such  enhanced homomorphism $(r,
\{\xi_\pi\}_{\pi \in
\Pi})$ realizing $\lambda$ and any $N$--th root $h=
\sqrt[N]{h_r}$ of its total peripheral load $h_r$ are associated to a
local representation of $\T_\lambda^q$.
\qed
\end{prop}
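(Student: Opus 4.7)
The statement is essentially a dictionary between the algebraic classification of local representations from Proposition~\ref{prop:ClassifyLocalRep} and the geometric parametrization of pleated surfaces from Proposition~\ref{prop:Pleated}, glued together by the identification of the two products of edge weights in Lemma~\ref{lem:PeriphLoad}. So the plan is to stack these three inputs in the obvious way, and check that the ``$N$-th root of a product of weights'' appearing on each side is in fact the same datum.

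First, I would appeal to Proposition~\ref{prop:ClassifyLocalRep}: up to isomorphism of local representations, a local representation $\rho\col \T_\lambda^q \to \End(V_1\otimes\dots\otimes V_m)$ is uniquely specified by the edge weights $x_i = \rho(X_i^N) \in \C^*$ together with a choice of central load $h\in\C^*$ satisfying $h^N = x_1 x_2 \cdots x_n$, and any such tuple $(x_1,\dots,x_n,h)$ is realized. Next, Proposition~\ref{prop:Pleated} identifies the set of possible edge-weight systems $(x_1,\dots,x_n)\in (\C^*)^n$ with the set of conjugation classes of enhanced homomorphisms $(r,\{\xi_\pi\}_{\pi\in\Pi})$ that realize the ideal triangulation $\lambda$, via the exponential shear-bend parameters. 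Composing these two bijections yields the desired correspondence, once we interpret the datum $h$ on the geometric side.

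The identification of $h$ is where Lemma~\ref{lem:PeriphLoad} enters: by definition, the total peripheral load of $(r,\{\xi_\pi\}_{\pi\in\Pi})$ is $h_r = x_1 x_2 \cdots x_n$, which by the algebraic classification is exactly the quantity whose $N$-th roots parametrize the choice of central load. Thus the condition $h^N = x_1 \cdots x_n$ of Proposition~\ref{prop:ClassifyLocalRep} translates verbatim into the condition $h = \sqrt[N]{h_r}$ of Proposition~\ref{prop:LocalRepEnhanced}, and the tuple $(x_1,\dots,x_n,h)$ corresponds to the pair $\bigl((r,\{\xi_\pi\}),\,h\bigr)$ with $h^N = h_r$.

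I do not anticipate a genuine obstacle: both directions of the bijection, as well as the surjectivity statement of the converse, follow immediately by combining the three named results. The only point which deserves a sentence of commentary is that Proposition~\ref{prop:Pleated} is formulated for conjugacy classes of enhanced homomorphisms, which matches the isomorphism classes of local representations on the algebraic side because conjugating $r$ by an element of $\Isom(\HH^3)$ does not change the shear-bend parameters $x_i$. With that observation the proof reduces to quoting Propositions~\ref{prop:ClassifyLocalRep} and~\ref{prop:Pleated} and Lemma~\ref{lem:PeriphLoad}.
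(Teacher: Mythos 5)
Your proof is correct and takes essentially the same route as the paper, which presents Proposition~\ref{prop:LocalRepEnhanced} as an immediate consequence of combining Proposition~\ref{prop:ClassifyLocalRep} (edge weights plus central load) with Proposition~\ref{prop:Pleated} (edge weights as shear-bend coordinates). One small attribution note: the identity $h_r = x_1\cdots x_n$ is simply the \emph{definition} of the total peripheral load, stated just before Lemma~\ref{lem:PeriphLoad}; the lemma itself instead gives the alternative eigenvalue formula for $h_r$ and is not actually needed here.
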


Similarly,  the combination of Theorems~\ref{thm:LocalAndWeights} and \ref{thm:EnhHomClass} gives:

\begin{thm}
\label{thm:LocalRepGroupHom}
Let $q$ be a primitive $N$--th root of $(-1)^{N+1}$. 
There is a one-to-one correspondence between:
\begin{enumerate}
\item local representations $\rho = \left\{ 
\rho_\lambda\col \T_\lambda^q \to \End(V_1\otimes \dots V_m) \right\}_{\lambda \in \Lambda(S)}$ of the quantum Teichm\"uller space $\T_S^q$, considered up to isomorphisms of local representations of $\T_S^q$;
\item enhanced group homomorphisms $\bigl (r, \{\xi_\pi\}_{\pi \in \Pi} \bigr)$ from $\pi_1(S)$ to $\Isom(\HH^3)$, considered up to conjugation in $\Isom(\HH^3)$, together with the choice of an $N$--th root $h\in \C^*$ for the total peripheral load $h_r$ of $r$. \qed
\end{enumerate}
\end{thm}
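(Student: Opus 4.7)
The plan is to derive Theorem~\ref{thm:LocalRepGroupHom} by directly composing the two immediately preceding classifications: the algebraic Theorem~\ref{thm:LocalAndWeights}, which describes local representations of $\T_S^q$ in terms of compatible edge weight systems plus a central load, and the geometric Theorem~\ref{thm:EnhHomClass}, which describes enhanced homomorphisms up to conjugation in terms of the same compatible edge weight systems. Both bijections factor through the same intermediate object---a system of nonzero edge weights $(x_1,\dots,x_n)\in(\C^*)^n$ attached to each ideal triangulation $\lambda$, glued by the rational map $\phi_{\lambda\lambda'}$ whenever $\lambda$ and $\lambda'$ differ---so concatenating them is essentially mechanical.

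Explicitly, given an isomorphism class of local representations $\rho=\{\rho_\lambda\}_{\lambda\in\Lambda(S)}$, Theorem~\ref{thm:LocalAndWeights} associates a compatible edge weight system and a shared $N$-th root $h=\sqrt[N]{x_1\cdots x_n}$. Discarding $h$ and feeding the edge weight system into Theorem~\ref{thm:EnhHomClass} produces a unique conjugacy class of peripherally generic enhanced homomorphisms $(r,\{\xi_\pi\}_{\pi\in\Pi})$. Reversing the process and using the two theorems in the other order recovers $\rho$, up to isomorphism of local representations, from $(r,\{\xi_\pi\})$ together with the datum $h$. The resulting correspondence is bijective because each of the two component bijections is.

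The one identification that must be checked is the matching of the two flavors of $N$-th root: Theorem~\ref{thm:LocalAndWeights} asks for an $N$-th root of $x_1x_2\cdots x_n$, whereas Theorem~\ref{thm:LocalRepGroupHom} asks for an $N$-th root of the total peripheral load $h_r$. But $h_r$ is defined as precisely the product $x_1x_2\cdots x_n$, and Lemma~\ref{lem:PeriphLoad} guarantees that this product depends only on $(r,\{\xi_\pi\})$ and not on the triangulation used to compute it. Hence the two $N$-th roots are literally the same datum, and $h$ pulls through the composition unambiguously.

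The only genuine subtlety---really of phrasing more than of mathematics, and so the closest thing to an obstacle---is that the enhanced homomorphisms appearing in clause~(2) must tacitly be restricted to those that are peripherally generic, since Theorem~\ref{thm:EnhHomClass} assigns edge weights only to such $(r,\{\xi_\pi\})$. By Lemma~\ref{lem:InjectivePeriphGen} this restriction is vacuous whenever $r$ is injective, which covers essentially all cases of geometric interest and was already flagged in the discussion following the informal statement of the theorem in the introduction.
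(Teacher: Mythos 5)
Your proposal is correct and matches the paper's own proof, which is literally the one-line observation ``the combination of Theorems~\ref{thm:LocalAndWeights} and \ref{thm:EnhHomClass} gives'' the statement. You also correctly flag both the $h_r = x_1\cdots x_n$ identification via Lemma~\ref{lem:PeriphLoad} and the implicit restriction to peripherally generic enhanced homomorphisms, which the paper only addresses informally in the introduction.
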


\section{Intertwining operators}
\label{sect:Intertwiners}

Let $\rho = \left\{ 
\rho_\lambda\col \T_\lambda^q \to \End(V_\lambda) \right\}_{\lambda \in \Lambda(S)}$ and $\rho' = \left\{ 
\rho'_\lambda\col \T_\lambda^q \to \End(V_\lambda') \right\}_{\lambda \in \Lambda(S)}$ be two local representation of the quantum Teichm\"uller space $\T_S^q$ which are isomorphic. For instance, we can have $\rho'=\rho$. 

Recall that each space $V_\lambda$ splits as a tensor product $V_\lambda= V_1\otimes \dots V_m$, where each $V_j$ is an $N$--dimensional vector space attached to the $j$--th face $T_j$ of $\lambda$; similarly, $V_\lambda'$ decomposes as $V_\lambda'= V_1'\otimes \dots V_m'$. By definition, for every ideal triangulations $\lambda$ and $\lambda'$, the representation $\rho_\lambda  \circ \Phi_{\lambda\lambda'} \col \T_\lambda^q \to \End(V_\lambda)$ is isomorphic to  $\rho_{\lambda'}\col \T_{\lambda'}^q \to \End(V_{\lambda'})$, which itself is isomorphic to $\rho_{\lambda'}'\col \T_{\lambda'}^q \to \End(V'_{\lambda'})$. Therefore, there exists  a linear isomorphism $L_{\lambda\lambda'}^{\rho\rho'} \col V_{\lambda'}' \to  V_\lambda$ such that 
$$\rho_\lambda \circ \Phi_{\lambda\lambda'}(P') = L_{\lambda\lambda'}^{\rho\rho'}\, \,\rho'_{\lambda'} (P')  \,(L_{\lambda\lambda'}^{\rho\rho'})^{-1}$$
in $\End(V_\lambda)$, for every $P'\in \T_{\lambda'}^q$.

By definition, these isomorphisms $L_{\lambda\lambda'}^{\rho\rho'}$ are \emph{intertwining operators} between the isomorphic local representation $\rho$ and $\rho'$ of the quantum Teichm\"uller space.

Note that multiplying $L_{\lambda\lambda'}^{\rho\rho'}$ by a constant does not change the above property, so that the intertwining operators can be determined at most up to multiplication by a scalar. 
We will write $A \dot =B $ to say that the linear maps $A$ and $B$ are equal up to multiplication by a scalar. 

Actually, because local representations may have many irreducible factors, the intertwining operators cannot be unique even up to scalar multiplication unless we impose several additional, but natural, conditions. Defining a unique family of intertwining operators will require to consider all possible surfaces $S$ at the same time.

\begin{thm}
\label{thm:IntertwiningUnique}
Up to scalar multiplication, there exists a unique family of intertwining operators $L_{\lambda\lambda'}^{\rho\rho'}$, indexed by pairs of isomorphic local representations $\rho$ and $\rho'$ of quantum Teichm\"uller spaces $\T_S^q$ of the same surface $S$ and by ideal triangulations $\lambda$, $\lambda'$ of $S$, such that:
\begin{enumerate}

\item \emph{(Composition Relation)} for any three ideal triangulations $\lambda$, $\lambda'$, $\lambda''$ of the same surface $S$ and for any three isomorphic local representation $\rho$, $\rho'$, $\rho''$  of $\T_S^q$, we have that 
$ L_{\lambda\lambda''}^{\rho\rho''} \dot =L_{\lambda\lambda'}^{\rho\rho'} \circ L_{\lambda'\lambda''}^{\rho'\rho''}$;

\item \emph{(Fusion Relation)}  if $S$ is obtained from another surface $S_0$ by fusing $S_0$ along certain components of $\partial S_0$, if the representations $\rho$, $\rho'$ of the quantum Teichm\"uller space $\T_S^q$ is obtained by fusing isomorphic representations $\rho_0$, $\rho'_0$ of $\T_{S_0}^q$, and if $\lambda$, $\lambda'$ are two ideal triangulations of $S$ obtained by fusing ideal triangulations $\lambda_0$, $\lambda_0'$ of $S_0$, then $L_{\lambda\lambda'}^{\rho\rho'} \dot= L_{\lambda_0\lambda_0'}^{\rho_0\rho'_0}$. \end{enumerate}
\end{thm}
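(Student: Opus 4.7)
The plan is to reduce everything, via the Composition and Fusion Relations, to the case of the square, where Schur's lemma forces uniqueness up to scalar.

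For uniqueness, I would first note that by Harer-Penner-Hatcher any two ideal triangulations are connected by a finite sequence of diagonal exchanges, so the Composition Relation determines each $L^{\rho\rho'}_{\lambda\lambda'}$ up to scalar from the intertwiners of successive diagonal exchanges. When $\lambda$ and $\lambda'$ differ by a single diagonal exchange, I split $S$ along the boundary of the quadrilateral where the exchange takes place, exhibiting $S$ as the fusion of a square $Q$ and a complementary (possibly disconnected) surface $R$; by the Fusion Relation, $L^{\rho\rho'}_{\lambda\lambda'}$ is then determined, up to scalar, by the corresponding intertwiner on $Q$. On the square, the proof of Proposition~\ref{prop:WeightsGiveRep} already shows that local representations are irreducible of dimension $N^2$, so any intertwiner between two isomorphic such representations is unique up to scalar by Schur's lemma.

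For existence, I would reverse this construction: choose, for each pair of isomorphic local representations of the square, an intertwiner by Schur's lemma; extend it to any diagonal exchange on any surface by declaring the Fusion Relation to hold (thereby defining $L^{\rho\rho'}_{\lambda\lambda'}$ on pairs of triangulations differing by a single flip); and for an arbitrary pair $(\lambda,\lambda')$, define $L^{\rho\rho'}_{\lambda\lambda'}$ as the composition of intertwiners along a chosen sequence of diagonal exchanges from $\lambda$ to $\lambda'$. The Composition and Fusion Relations then hold by construction, modulo checking that the definition is independent, up to scalar, of the chosen sequence.

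The main obstacle is precisely this path-independence. By Harer-Penner-Hatcher, the groupoid of sequences of diagonal exchanges is presented by three families of relations: the involution relation (a flip followed by its inverse), the commutation of disjoint flips, and the pentagon relation on a pentagon surface. The involution and commutation identities both follow immediately from the Fusion Relation together with Schur's lemma on the relevant squares. The pentagon relation is the delicate point: one must show that the composition of five intertwiners around the 5-cycle of flips on a pentagon is a scalar endomorphism of $V_1\otimes V_2\otimes V_3$. Because the coordinate change isomorphisms $\Phi^q_{\lambda\lambda'}$ themselves satisfy the pentagon identity (a consequence of their Composition Relation), this composition lies in the commutant of a single local representation $\rho_\lambda$ of the pentagon; I would then verify that it is a scalar by explicit computation, using the Chekhov-Fock formulas for diagonal exchanges together with the explicit model for triangle representations provided by Lemma~\ref{lem:TriangleRep}. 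Alternatively, one can appeal to Kashaev's pentagon identity for the closely related $6j$-symbols mentioned in the introduction, which the intertwiners $L^{\rho\rho'}_{\lambda\lambda'}$ essentially reproduce.
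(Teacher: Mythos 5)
Your overall plan is the right one, and it matches the paper's in broad outline: reduce uniqueness to diagonal exchanges via composition, split along the square via fusion, use Schur's lemma on the square, and then for existence build intertwiners along a path of flips and verify independence under the Harer--Penner--Hatcher moves. But there is one key structural insight that you miss, and one case you gloss over, both of which matter.

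The key insight is that local representations of any surface that is a disjoint union of \emph{polygons} (not just the square) are automatically irreducible; this is the content of Lemma~\ref{lem:IntertwinersPolygons} in the paper, proved by identifying $\T_\lambda^q$ for a union of polygons with a tensor product of Weyl-type algebras and counting dimensions. You observe correctly that the pentagon composition lies in the commutant of a local representation on the pentagon, but because you do not know that this local representation is irreducible, you propose to verify the resulting operator is scalar by explicit computation with the Chekhov--Fock formulas, or by appealing to Kashaev's $6j$-symbol identity. Neither is necessary: once you know the pentagon's local representation is irreducible, Schur's lemma finishes the job immediately, exactly as for the involution and commutation moves. Relying on the explicit pentagon computation is not a gap in principle, but it replaces a one-line conceptual argument with a calculation that is genuinely delicate (it is essentially the quantum dilogarithm pentagon identity), and it obscures the real reason uniqueness holds.

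The case you gloss over is $\lambda=\lambda'$ with $\rho\neq\rho'$. Since a local representation $\rho_\lambda$ on a general surface is far from irreducible, Schur's lemma alone does not make the intertwiner $L_{\lambda\lambda}^{\rho\rho'}$ unique up to scalar; you need the Fusion Relation, applied to the splitting of $S$ into its individual triangles, to force $L_{\lambda\lambda}^{\rho\rho'}$ to be tensor-split, and then apply Schur's lemma factor-by-factor (Lemma~\ref{lem:IntertwSameTriang}). Your chain-of-flips argument for a general pair $(\lambda,\lambda',\rho,\rho')$ implicitly requires this case (the final link in the chain changes the representation while fixing the triangulation), and you should also check that these two kinds of moves commute up to scalar (the paper's Lemma~\ref{lem:IntertwCommute}). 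Finally, both in uniqueness and existence, the fusion-based definition depends a priori on a choice of realization of $\rho_\lambda$ as a fusion of smaller local representations (only the equivalence class of the $m$-tuple $(\rho_1,\dots,\rho_m)$ is canonical), and independence of this choice needs a short argument; the paper addresses it at the end of the proofs of Lemmas~\ref{lem:IntertwSameTriang} and \ref{lem:IntertwDiagEx}, but your proposal does not.
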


For the Fusion Relation (3), recall that, when the local representation $\rho_\lambda\col \T_\lambda \to \End(V_\lambda)$ is obtained by fusing the local representation $\rho_{\lambda_0}\col \T_{\lambda_0} \to \End(V_{\lambda_0})$, the two vector spaces $V_\lambda$ and $V_{\lambda_0}$ are equal. 

We will split the proof of Theorem~\ref{thm:IntertwiningUnique} into several steps.

We first restrict attention to the case of surfaces $S$ whose components  are polygons, namely closed disks with punctures removed from their boundary (and no internal puncture). Namely, we consider the restricted version of Theorem~\ref{thm:IntertwiningUnique} where all surfaces $S$ considered are disjoint union of polygons. 

\begin{lem}
\label{lem:IntertwinersPolygons}
The conclusions of Theorem~\ref{thm:IntertwiningUnique} hold if we restrict attention to surfaces $S$ which are disjoint union of polygons. 
\end{lem}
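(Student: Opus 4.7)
The plan is to prove existence and uniqueness of the intertwining operators for polygons by combining the abstract definition of isomorphic local representations with an irreducibility result that lets us apply Schur's Lemma, and then derive the Composition and Fusion Relations as formal consequences of uniqueness.

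The first step is to establish that, when $S$ is a polygon (a closed disk with boundary punctures and no interior puncture), every local representation $\rho_\lambda\col \T_\lambda^q \to \End(V_\lambda)$ is in fact irreducible as an ordinary representation of $\T_\lambda^q$. A polygon with $p$ punctures has $p-2$ triangles in every ideal triangulation, so $V_\lambda$ has dimension $N^{p-2}$. On the other hand, a straightforward extension of the analysis of the center of $\T_\lambda^q$ in \cite[\S3]{BonLiu} (in the same spirit as the dimension count for the square already invoked in the proof of Proposition~\ref{prop:WeightsGiveRep}) shows that irreducible representations of $\T_\lambda^q$ for a polygon also have dimension $N^{p-2}$, and are classified by the same edge weights and central load as local representations are by Proposition~\ref{prop:ClassifyLocalRep}. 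This matching of dimensions and invariants forces every local representation of a polygon to be irreducible. For a disjoint union of polygons, the Chekhov--Fock algebra and its local representations factor across connected components, and a tensor product of irreducibles over the algebraically closed field $\C$ remains irreducible, so the same conclusion holds.

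Existence of an intertwiner is then immediate from the definitions: since $\rho$ and $\rho'$ are isomorphic local representations of $\T_S^q$, the algebra homomorphisms $\rho_\lambda \circ \Phi_{\lambda\lambda'}^q$ and $\rho'_{\lambda'}$ are each isomorphic to $\rho_{\lambda'}$ as representations of $\T_{\lambda'}^q$, so there exists a linear isomorphism $L_{\lambda\lambda'}^{\rho\rho'}\col V_{\lambda'}' \to V_\lambda$ realizing this equivalence. Uniqueness up to a scalar then follows at once from Schur's Lemma applied to the two isomorphic irreducible representations $\rho_\lambda \circ \Phi_{\lambda\lambda'}^q$ and $\rho'_{\lambda'}$ of $\T_{\lambda'}^q$.

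With uniqueness established, both naturality properties become formal consequences. For the Composition Relation, the composition $L_{\lambda\lambda'}^{\rho\rho'} \circ L_{\lambda'\lambda''}^{\rho'\rho''}$ is itself an intertwiner for $(\lambda,\lambda'')$ and $(\rho,\rho'')$, so uniqueness forces it to be a scalar multiple of $L_{\lambda\lambda''}^{\rho\rho''}$. For the Fusion Relation, suppose $\lambda$, $\lambda'$, $\rho$, $\rho'$ are obtained by fusing data $\lambda_0$, $\lambda_0'$, $\rho_0$, $\rho_0'$ on $S_0$. The Fusion Relation for the coordinate change isomorphisms of Section~\ref{subsect:Switch} guarantees that $\Phi_{\lambda\lambda'}^q$ is the restriction of $\Phi_{\lambda_0\lambda_0'}^q$ to the subalgebra $\T_{\lambda'}^q \subset \T_{\lambda_0'}^q$. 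Since the fused representations act on the same underlying vector space as the unfused ones, the operator $L_{\lambda_0\lambda_0'}^{\rho_0\rho_0'}$ automatically intertwines $\rho_\lambda \circ \Phi_{\lambda\lambda'}^q$ with $\rho'_{\lambda'}$, and uniqueness implies it is a scalar multiple of $L_{\lambda\lambda'}^{\rho\rho'}$.

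The main obstacle is the irreducibility assertion for local representations of polygons, which rests on a precise dimension count for irreducible representations of $\T_\lambda^q$ via its center; once this is in place, the rest of the argument reduces to Schur's Lemma and bookkeeping with the Fusion and Composition Relations of the coordinate change isomorphisms.
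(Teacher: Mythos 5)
Your proof follows essentially the same approach as the paper: the crux is that a local representation of a polygon surface is irreducible because its dimension $N^{p-2}$ coincides with the dimension of every irreducible representation of $\T_\lambda^q$, after which Schur's Lemma gives uniqueness and the Composition and Fusion Relations drop out formally. The only organizational difference is that you treat a connected polygon first and then pass to disjoint unions via the fact that an external tensor product of irreducibles over $\C$ is irreducible, whereas the paper does the dimension count directly for a disjoint union of $c$ polygons by identifying $\T_\lambda^q$ with an explicit algebra $\mathcal A_{p,c}$ (a tensor product of $p-2c$ quantum-torus factors and $c$ central generators, cf.\ \cite[Lem.~18--19]{BonLiu}); both routes give the same $N^{p-2c}$ and are equally valid.
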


\begin{proof}
The key property in this case is that, if $S$ is a disjoint union of  polygons, any  local representation $\rho_\lambda\col \T_\lambda^q \to \End(V_\lambda) $ is irreducible. Indeed, if $S$ consists of $c$ polygons with a total of $p$ punctures,  one easily checks by induction on $p$ that $\T_\lambda^q$ is isomorphic to the algebra $\mathcal A_{p,c}$ defined by generators $A_1^{\pm1}$, $B_1^{\pm1}$, $A_2^{\pm1}$, $B_2^{\pm1}$, \dots, $A_{p-2c}^{\pm1}$, $B_{p-2c}^{\pm1}$, $H_1^{\pm1}$, $H_2^{\pm1}$, \dots, $H_c^{\pm1}$ and by the relations  $A_iB_i = q^2 B_iA_i$, $A_iB_j=B_jA_j$ if $i\not=j$, $A_iH_k = H_kA_i$, $B_iH_k=H_kB_i$. Very elementary algebra (compare  \cite[Lem.~18--19]{BonLiu}) then shows that every irreducible representation of $\T_\lambda^q\cong \mathcal A_{p,c}$ has dimension $N^{p-2c}$. This is also  the dimension of any of its local representations. It follows that every local representation $\rho_\lambda\col \T_\lambda^q \to \End(V_\lambda) $ is irreducible. 

Since $\rho_\lambda$ and $\rho_{\lambda'}' $ are irreducible, it is now immediate that the isomorphism $L_{\lambda\lambda'}^{\rho\rho'}$ between $\rho_\lambda$ and $\rho_{\lambda'}' $ is unique up to scalar multiplication. 

This uniqueness property implies that the intertwining operators satisfy the Composition and Fusion Relations.
\end{proof}

We now begin to prove the uniqueness part of Theorem~\ref{thm:IntertwiningUnique} in the general case. Namely, suppose that we are given a family of intertwiners $L_{\lambda\lambda'}^{\rho\rho'}$  as in Theorem~\ref{thm:IntertwiningUnique}. We will progressively show that these are uniquely determined up to scalar multiplication. 

We first consider the case where $\lambda'=\lambda$. 

\begin{lem}
\label{lem:IntertwSameTriang}
The intertwiners $L_{\lambda\lambda}^{\rho\rho'}$ are uniquely determined. 
\end{lem}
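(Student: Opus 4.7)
The plan is to reduce the lemma to the polygon case handled by Lemma~\ref{lem:IntertwinersPolygons}, using the Fusion Relation as the bridge. First I would split $S$ open along every edge of the ideal triangulation $\lambda$ to obtain a disjoint union $S_0$ of open triangles, equipped with the induced ideal triangulation $\lambda_0$. By the very definition of local representations recalled in \S\ref{subsect:Fusing}, the local representations $\rho_\lambda\col \T_\lambda^q \to \End(V_\lambda)$ and $\rho'_\lambda\col \T_\lambda^q \to \End(V'_\lambda)$ arise by fusion from local representations $\rho_{0,\lambda_0}$ and $\rho'_{0,\lambda_0}$ of $\T_{\lambda_0}^q$, acting on the very same vector spaces. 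Since $\rho$ and $\rho'$ are isomorphic as local representations of $\T_S^q$, one checks from the definition of isomorphism given in \S\ref{sect:ClassifyLocalRep} that $\rho_0$ and $\rho_0'$ can be chosen so as to be isomorphic as local representations of $\T_{S_0}^q$.

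Next, suppose we are given two families $\{L_{\mu\mu'}^{\sigma\sigma'}\}$ and $\{\tilde L_{\mu\mu'}^{\sigma\sigma'}\}$ of intertwining operators satisfying the Composition and Fusion Relations of Theorem~\ref{thm:IntertwiningUnique}. Applying the Fusion Relation to both families in the special case where the ambient surface is $S$, where $\lambda'=\lambda$, and where the splitting is the one above, one obtains
$$
L_{\lambda\lambda}^{\rho\rho'} \dot = L_{\lambda_0\lambda_0}^{\rho_0\rho_0'}, \qquad \tilde L_{\lambda\lambda}^{\rho\rho'} \dot = \tilde L_{\lambda_0\lambda_0}^{\rho_0\rho_0'}.
$$
Because $S_0$ is a disjoint union of triangles, hence of polygons, Lemma~\ref{lem:IntertwinersPolygons} applies to give $L_{\lambda_0\lambda_0}^{\rho_0\rho_0'} \dot = \tilde L_{\lambda_0\lambda_0}^{\rho_0\rho_0'}$. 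Combining the three relations yields $L_{\lambda\lambda}^{\rho\rho'} \dot = \tilde L_{\lambda\lambda}^{\rho\rho'}$, which is the desired uniqueness up to scalar.

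The only point where care is required is the claim that the splitting construction actually produces \emph{isomorphic} local representations $\rho_{0}$ and $\rho_{0}'$ of $\T_{S_0}^q$ from an isomorphism between $\rho$ and $\rho'$ as local representations of $\T_S^q$. This amounts to unwinding the definitions in \S\ref{sebsect:LocalRep} and \S\ref{subsect:Fusing}: an isomorphism of local representations is by definition tensor-split into isomorphisms on each triangle factor, and such a tensor-split isomorphism is precisely what is needed to witness an isomorphism of the unfused local representations of $\T_{S_0}^q$. Once this tautology is recorded, the argument is entirely formal and the main substance of the lemma lives in Lemma~\ref{lem:IntertwinersPolygons}, whose uniqueness relies on the irreducibility of local representations over polygons. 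I do not expect any serious obstacle beyond this bookkeeping.
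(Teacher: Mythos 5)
Your argument is correct and follows essentially the same route as the paper: split $S$ along all edges of $\lambda$, use the Fusion Relation to reduce $L_{\lambda\lambda}^{\rho\rho'}$ to an intertwiner over a disjoint union of triangles, and then invoke irreducibility (packaged via Lemma~\ref{lem:IntertwinersPolygons}) to get uniqueness up to scalar. The only cosmetic difference is that the paper works directly with the tensor decomposition $\rho_\lambda = \rho_1 \otimes \dots \otimes \rho_m$ and explicitly records the independence-of-representative issue for later use, whereas your comparison-of-two-families framing lets you fix one realization and sidestep that bookkeeping.
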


\begin{proof}
Let $T_1$, \dots, $T_m$ be the faces of $\lambda$. By definition of local representations, there are representations $\rho_j\col \T_{T_j}^q \to \End(V_j)$ of the associated triangle algebras such that the local representation $\rho_\lambda$ is represented by the triangle representations $\rho_1$, \dots $\rho_m$ if we formally consider a local representation as an equivalence class of $m$ representations of the triangle algebra; in practice, we usually abbreviate this fact as $\rho_\lambda=  \rho_1 \otimes \dots \otimes \rho_m$. 

Each representation $\rho_j$ is classified by weights $x_{ji}\in \C^*$ associated to the sides of the triangle $T_j$, and by a central load $h_j$. Similarly, $\rho_\lambda$ is classified by edge weights $x_i = x_{ji}x_{ki}$, where the edge $\lambda_i$ separates the two triangles $T_j$ and $T_k$, and by the central load $h=h_1\dots h_m$. The local representation $\rho_\lambda'$ is isomorphic to $\rho_\lambda$ and, by Corollary~\ref{cor:LocalIsom} is consequently classified by the same edge weights and the same central load. As a consequence, we can represent $\rho_\lambda'$ by a family of representations $\rho_j'\col \T_{T_j'}^q \to \End(V_j')$ such that each $\rho_j'$ is classified by the same edge weights $x_{ji}$ and the same central load $h_j$ as $\rho_j$. In particular, there exists an isomorphism $L_j\col V_j' \to V_j$ between the representations $\rho_j$ and $\rho_j'$. 

We can now apply the Fusion Relation to conclude that $L_{\lambda\lambda}^{\rho\rho'} \dot= L_1 \otimes \dots \otimes L_m$. Since the isomorphisms $L_j$ are unique up to scalar multiplication by irreducibility of the $\rho_j$, this shows that $L_{\lambda\lambda}^{\rho\rho'}$ is uniquely determined up to scalar multiplication once we have chosen a realization of $\rho$ by representations  $\rho_j\col \T_{T_j}^q \to \End(V_j)$ of the triangle algebras associated to the faces of $\lambda$.

It remains to show that $L_{\lambda\lambda}^{\rho\rho'}$ is independent of this choice. Other choices differ from this one by rescaling the images  $\rho_j(X_{ji})$ of the generators $X_{ji}$ of $\T_{T_j}^q$. Such a rescaling leads to the same rescaling of the corresponding $\rho_j'(X_{ji})$, so that we can keep the same $L_j$. This concludes the proof of Lemma~\ref{lem:IntertwSameTriang}. 
\end{proof}

We next consider the case where  $\lambda$ and $\lambda'$ differ only by a diagonal exchange. 

\begin{lem}
\label{lem:IntertwDiagEx}
If $\lambda$ and $\lambda'$ differ only by a diagonal exchange, then the intertwiner $L_{\lambda\lambda'}^{\rho\rho'}$ is uniquely determined up to scalar multiplication. 
\end{lem}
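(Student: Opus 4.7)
The plan is to split $S$ open along the four edges forming the boundary of the square in which the diagonal exchange occurs, and then to use the already established uniqueness results (Lemmas~\ref{lem:IntertwinersPolygons} and \ref{lem:IntertwSameTriang}) on the pieces, combined via the Fusion Relation.

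More precisely, let $Q$ be the square containing the diagonal exchange, and let $R$ be the (possibly disconnected) surface obtained by splitting $S$ open along the boundary of $Q$. Then $S$ is obtained from $S_0 = Q \sqcup R$ by fusing the appropriate components of $\partial S_0$, and the ideal triangulations $\lambda$, $\lambda'$ of $S$ come from triangulations $\lambda_0 = \lambda_Q \sqcup \lambda_R$ and $\lambda_0' = \lambda_Q' \sqcup \lambda_R'$ of $S_0$, where $\lambda_R = \lambda_R'$ and $\lambda_Q$, $\lambda_Q'$ differ only by the choice of a diagonal for $Q$. By the definition of local representations as fusions, we may realize $\rho$, $\rho'$ as fusions of local representations $\rho_0 = \rho_Q \otimes \rho_R$ and $\rho_0' = \rho_Q' \otimes \rho_R'$ of $\T_{S_0}^q$ (each $\rho_Q$, $\rho_R$ being isomorphic to the corresponding $\rho_Q'$, $\rho_R'$ by Proposition~\ref{prop:ClassifyLocalRep} applied to the edge weights and central loads read off by restriction).

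Applying the Fusion Relation, we get $L_{\lambda\lambda'}^{\rho\rho'} \dot= L_{\lambda_0\lambda_0'}^{\rho_0\rho_0'}$. Since $S_0$ is the disjoint union of $Q$ and $R$, the Chekhov-Fock algebra $\T_{\lambda_0}^q$ splits as $\T_{\lambda_Q}^q \otimes \T_{\lambda_R}^q$, and the Fusion Relation (applied to each piece separately) forces $L_{\lambda_0\lambda_0'}^{\rho_0\rho_0'} \dot= L_{\lambda_Q\lambda_Q'}^{\rho_Q\rho_Q'} \otimes L_{\lambda_R\lambda_R}^{\rho_R\rho_R'}$. Now the first factor is uniquely determined up to a scalar by Lemma~\ref{lem:IntertwinersPolygons}, since $Q$ is a polygon; and the second factor is uniquely determined up to a scalar by Lemma~\ref{lem:IntertwSameTriang}, since $\lambda_R = \lambda_R'$. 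Combining these two uniqueness statements yields the uniqueness of $L_{\lambda\lambda'}^{\rho\rho'}$ up to scalar multiplication.

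The main obstacle is the bookkeeping at the intermediate step, namely justifying cleanly that an intertwiner on a disjoint union decomposes (up to a scalar) as a tensor product of intertwiners on the components. This is intuitively obvious, and should follow directly from the Fusion Relation applied to the trivial fusion of $Q \sqcup R$ with itself (or equivalently from the fact that a local representation of $\T_{S_0}^q = \T_Q^q \otimes \T_R^q$ tautologically splits as a tensor product and that such tensor-product representations admit only tensor-product intertwiners up to scalar). Apart from this point, which merely requires a careful reading of the definitions, the argument is purely a reduction to the two previously established lemmas.
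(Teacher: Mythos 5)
Your overall strategy — reduce to the polygon lemma by splitting along the boundary of the square $Q$ and invoking the Fusion Relation — is in the same spirit as the paper's proof, but as written it has a genuine gap precisely at the point you yourself flag as the "main obstacle." The paper's Fusion Relation is a statement about obtaining $S$ by \emph{gluing} boundary components of $S_0$; it gives you $L_{\lambda\lambda'}^{\rho\rho'}\dot= L_{\lambda_0\lambda_0'}^{\rho_0\rho_0'}$ when $S_0=Q\sqcup R$, but it does \emph{not} then let you peel $L_{\lambda_0\lambda_0'}^{\rho_0\rho_0'}$ into a tensor product $L_Q\otimes L_R$ of intertwiners belonging to the same family on $Q$ and on $R$ — there is no fusion going from $Q\sqcup R$ to $Q$ alone or to $R$ alone, and the "trivial fusion of $Q\sqcup R$ with itself" is a tautology that yields nothing. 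The parenthetical alternative you give (Skolem–Noether forcing tensor-split intertwiners between tensor-product representations when one factor is irreducible) does correctly show that any intertwiner decomposes as $L_Q\otimes L_R$, with $L_Q$ unique up to scalar since $Q$ is a polygon; but this still leaves $L_R$ only determined up to an arbitrary automorphism of $\rho_R$, because $\rho_R$ is in general not irreducible, and nothing in your argument identifies this tensor factor with the family's own intertwiner $L_{\lambda_R\lambda_R}^{\rho_R\rho_R'}$ so that Lemma~\ref{lem:IntertwSameTriang} could be applied.

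The paper sidesteps both problems at once by splitting further, along \emph{all} edges except the exchanged diagonal, so that $S_0 = Q\sqcup T_3\sqcup\dots\sqcup T_m$ is a disjoint union of polygons; then Lemma~\ref{lem:IntertwinersPolygons} applies directly (the whole local representation of $\T_{\lambda_0}^q$ is irreducible) and no intermediate tensor-decomposition argument on a non-polygonal $R$ is needed. In addition, the paper introduces an auxiliary representation $\rho''$, agreeing with $\rho$ away from $\lambda'$ and defined at $\lambda'$ as an honest fusion, precisely to avoid the unfusing step you assert informally ("we may realize $\rho$, $\rho'$ as fusions of $\rho_0$, $\rho_0'$"): it then factors $L_{\lambda\lambda'}^{\rho\rho'}\dot= L_{\lambda\lambda'}^{\rho\rho''}\circ L_{\lambda'\lambda'}^{\rho''\rho'}$ via the Composition Relation, treating the first factor by the polygon lemma and the second by Lemma~\ref{lem:IntertwSameTriang}. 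Your proof can be repaired by following this template — split down to polygons, and use the $\rho''$ bridge rather than asserting the unfusing and the tensor decomposition directly — but in its current form the key step is not justified.
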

\begin{proof}
Split the surface $S$ along all the components of $\lambda$ except for the diagonal where the diagonal exchange takes place. One obtains a square $Q$ and $m-2$ triangles $T_3$, $T_4$, \dots, $T_{m}$, assuming without loss of generality that $T_1$ and $T_2$ are the two faces of $\lambda$ that are contained in $Q$.  The ideal triangulations $\lambda$, $\lambda'$ respectively give ideal triangulations $\lambda_Q$, $\lambda_Q'$ of $Q$, differing by a different choice of a diagonal for $Q$.  

The local representation $\rho_{\lambda}$ is obtained by fusing together a local representation $\rho_{\lambda_Q}\col \T_{\lambda_Q} \to \End(V_1\otimes V_2)$ and  representations $\rho_j\col \T_{T_j} \to \End(V_j)$ of the triangle algebras corresponding to $T_3$, \dots, $T_{m}$. Since the quantum coordinate changes satisfy the Fusion Relation of Section~\ref{subsect:Switch}, $\rho_\lambda \circ \Phi_{\lambda\lambda'}^q$ is obtained by fusing together the representation $\rho_{\lambda_Q} \circ \Phi_{\lambda_Q\lambda_Q'}^q$ of $\T_{\lambda_Q'}^q$ and the representations $\rho_j$ of the triangle algebras $\T_{T_j}^q$.

By Proposition~\ref{prop:WeightsGiveRep}, 
$\rho_{\lambda_Q} \circ \Phi_{\lambda_Q\lambda_Q'}^q$ is isomorphic to a local representation $\rho''_{\lambda_Q''}\col \T_{\lambda_Q'} \to \End(V''_1 \otimes V''_2)$.  If we set $V''_{\lambda'} = V''_1\otimes V''_2 \otimes V_3 \otimes \dots \otimes V_m$, let  $\rho''_{\lambda'}\col \T_{\lambda'}^q \to \End(V''_{\lambda'}) $ be the representation obtained by fusing together $\rho''_{\lambda'_Q}$ and the $\rho_j$ with $j\geq 3$. Extend $\rho''_{\lambda'}$ to a representation $\rho'' = \left\{ 
\rho''_\mu\col \T_\mu^q \to \End(W_\mu) \right\}_{\mu \in \Lambda(S)}$ of the quantum Teichm\"uller space by the property that $\rho''_\mu = \rho_\mu$ if $\mu \not= \lambda'$. 
Another application of  Proposition~\ref{prop:WeightsGiveRep} shows that the representations $\rho'$ and $\rho''$ of the quantum Teichm\"uller space are isomorphic, by a family of intertwining operators $L_{\mu\mu'}^{\rho\rho''}$.

If $L_{\lambda_Q\lambda_Q'} \col V_1'' \otimes V_2'' \to V_1 \otimes V_2$ is the isomorphism between  $\rho_{\lambda_Q} \circ \Phi_{\lambda_Q\lambda_Q'}^q$ and $\rho''_{\lambda_Q'}$, the Fusion Relation shows that modulo scalar multiplication  $L_{\lambda\lambda'}^{\rho\rho''}$ is just the tensor product of $L_{\lambda_Q\lambda_Q'} $ and of the identity maps of the $V_j$ with $j\geq 2$. 
By irreducibility of $\rho_{\lambda_Q}$ (or Lemma~\ref{lem:IntertwinersPolygons}), the isomorphism 
$L_{\lambda_Q\lambda_Q'} $ is unique up to scalar multiplication. It follows that $L_{\lambda\lambda'}^{\rho\rho''}$ is also unique up to scalar multiplication. 

Finally, the Composition Relation shows that $L_{\lambda\lambda'}^{\rho\rho'} \dot = L_{\lambda\lambda'}^{\rho\rho''}\circ L_{\lambda'\lambda'}^{\rho''\rho'}$. Applying the uniqueness property of Lemma~\ref{lem:IntertwSameTriang} to $L_{\lambda'\lambda'}^{\rho''\rho'}$, we conclude that $L_{\lambda\lambda'}^{\rho\rho'} $ is uniquely determined up to scalar multiplication.

However, we still have to check that this $L_{\lambda\lambda'}^{\rho\rho'} $ does not depend on the way we split $\rho_\lambda$ as the fusion of a local representation $\rho_{\lambda_Q}$ of $\T_{\lambda_Q}^q$ and of representations $\rho_j$ of $\T_{T_j}^q$ with $j\geq 3$. The argument is identical to the one we already used in the proof of Lemma~\ref{lem:IntertwSameTriang}.
\end{proof}

We are now ready to conclude the uniqueness part of Theorem~\ref{thm:IntertwiningUnique}. 

\begin{lem}
\label{lem:IntertwUnique}
The intertwiners $L_{\lambda\lambda'}^{\rho\rho'} $  are uniquely determined, up to scalar multiplication. 
\end{lem}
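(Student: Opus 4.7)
The plan is to reduce the general case to the special case of a single diagonal exchange, which has already been handled by Lemma~\ref{lem:IntertwDiagEx}. The main tool is the classical theorem of Harer, Penner, and Hatcher \cite{Har, Pen, Hat} (already invoked in the proof of Proposition~\ref{prop:RepGivesWeights}), which states that any two ideal triangulations of $S$ can be connected by a finite sequence of diagonal exchanges, combined with the Composition Relation from the statement of Theorem~\ref{thm:IntertwiningUnique}.

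First, I would choose a sequence of ideal triangulations $\lambda = \lambda^{(0)}, \lambda^{(1)}, \dots, \lambda^{(k)} = \lambda'$ such that consecutive triangulations $\lambda^{(i)}$ and $\lambda^{(i+1)}$ differ by a single diagonal exchange. Applying the Composition Relation iteratively, with the intermediate local representation at each step simply taken to be $\rho$ itself (which is isomorphic to itself, so that the Composition Relation applies), I would factor
$$L_{\lambda\lambda'}^{\rho\rho'} \dot = L_{\lambda^{(0)}\lambda^{(1)}}^{\rho\rho} \circ L_{\lambda^{(1)}\lambda^{(2)}}^{\rho\rho} \circ \dots \circ L_{\lambda^{(k-2)}\lambda^{(k-1)}}^{\rho\rho} \circ L_{\lambda^{(k-1)}\lambda^{(k)}}^{\rho\rho'}.$$
Each factor on the right-hand side is an intertwiner between two ideal triangulations differing by a single diagonal exchange, and is therefore uniquely determined up to scalar multiplication by Lemma~\ref{lem:IntertwDiagEx}. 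It follows that $L_{\lambda\lambda'}^{\rho\rho'}$ itself is uniquely determined up to scalar multiplication.

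I do not expect any essential obstacle at this step: the substantive work has been carried out in the preceding three lemmas (the polygon case in Lemma~\ref{lem:IntertwinersPolygons}, the $\lambda=\lambda'$ case in Lemma~\ref{lem:IntertwSameTriang}, and the single-diagonal-exchange case in Lemma~\ref{lem:IntertwDiagEx}), and what remains is a short combinatorial assembly. The only point requiring a bit of care is to ensure that the intermediate representations appearing in the iterated Composition Relation are genuinely isomorphic local representations of $\T_S^q$, which is automatic in our setup since we take all of them equal to $\rho$.
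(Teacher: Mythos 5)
Your proof is correct and takes essentially the same route as the paper: connect $\lambda$ to $\lambda'$ by diagonal exchanges via Harer--Penner--Hatcher, factor $L_{\lambda\lambda'}^{\rho\rho'}$ through the Composition Relation, and invoke the single-exchange uniqueness of Lemma~\ref{lem:IntertwDiagEx} for each factor. The only cosmetic difference is that the paper ends the chain with a separate term $L_{\lambda'\lambda'}^{\rho\rho'}$ handled by Lemma~\ref{lem:IntertwSameTriang}, whereas you absorb the change from $\rho$ to $\rho'$ into the last diagonal-exchange factor, which is equally valid since Lemma~\ref{lem:IntertwDiagEx} is stated for arbitrary isomorphic pairs $\rho$, $\rho'$.
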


\begin{proof}
Any two ideal triangulations $\lambda$ and $\lambda'$ of the surface $S$ can by joined by a sequence of ideal triangulations $\lambda = \lambda^{(0)}$, $\lambda^{(1)}$, \dots, $\lambda^{(l-1)}$, $\lambda^{(l)}=\lambda'$ such that each $\lambda^{(k)}$ is obtained from $\lambda^{(k+1)}$  by a diagonal exchange. See \cite{Har, Pen, Hat}. By the Composition Relation, we necessarily have that 
$$
L_{\lambda\lambda'}^{\rho\rho'} \dot = L_{\lambda\lambda^{(1)}}^{\rho\rho} \circ L_{\lambda^{(1)}\lambda^{(2)}}^{\rho\rho} \circ \dots \circ L_{\lambda^{(l-1)}\lambda'}^{\rho\rho} \circ L_{\lambda'\lambda'}^{\rho\rho'} .$$
By Lemma~\ref{lem:IntertwDiagEx} and \ref{lem:IntertwSameTriang}, each $L_{\lambda^{(k)}\lambda^{(k+1)}}^{\rho\rho} $ and $L_{\lambda'\lambda'}^{\rho\rho'} $ are uniquely determined. Therefore, $L_{\lambda\lambda'}^{\rho\rho'} $ is uniquely determined, up to scalar multiplication. 
\end{proof}

We now prove the existence part of Theorem~\ref{thm:IntertwiningUnique}. The proof of the uniqueness tells us how to proceed. 

We first focus on the case where $\rho'=\rho$. 

Connect $\lambda$ to $\lambda'$ by a sequence of ideal triangulations $\lambda = \lambda^{(0)}$, $\lambda^{(1)}$, \dots, $\lambda^{(l-1)}$, $\lambda^{(l)}=\lambda'$ such that each $\lambda^{(k)}$ is obtained from $\lambda^{(k+1)}$  by a diagonal exchange.

 Let $L_{\lambda^{(k)}\lambda^{(k+1)}}^{\rho\rho}$ be the intertwining operator between $\rho_{\lambda^{(k)}} \col \T_{\lambda^{(k)}}^q \to \End(V_{\lambda^{(k)}})$ and $\rho_{\lambda^{(k+1)}} \col \T_{\lambda^{(k+1)}}^q \to \End(V_{\lambda^{(k+1)}})$ constructed in the proof of Lemma~\ref{lem:IntertwDiagEx}. The main property we need is the following: Index the faces of $\lambda^{(k)}$ and $\lambda^{(k+1)}$ so that the first two faces of each are located in the square where the diagonal exchange takes place, and so that for $j>2$ the $j$--th face of $\lambda^{(k)}$ is equal to the $j$--th face of  $\lambda^{(k=1)}$; then, for the corresponding splittings $V_{\lambda^{(k)}} = V_1 \otimes \dots \otimes V_m$ and $V_{\lambda^{(k+1)}} = W_1 \otimes \dots \otimes W_m$, $L_{\lambda^{(k)}\lambda^{(k+1)}}^{\rho\rho}$  is the tensor product of isomorphisms $W_1 \otimes W_2 \to V_1 \otimes V_2$ and $W_j \to V_j$ for $j>2$. Namely, $L_{\lambda^{(k)}\lambda^{(k+1)}}^{\rho\rho}$  satisfy a Fusion Relation when we split $S$ along all edges of $\lambda^{(k)}$ other than the diagonal being exchanged.  
 
 Define
 $$
 L_{\lambda\lambda'}^{\rho\rho} = L_{\lambda^{(0)}\lambda^{(1)}}^{\rho\rho} \circ L_{\lambda^{(1)}\lambda^{(2)}}^{\rho\rho}\circ \dots \circ L_{\lambda^{(l-1)}\lambda^{(l)}}^{\rho\rho}.
 $$

\begin{lem}
\label{lem:IntertwWellDefined}
The above intertwining operator $L_{\lambda\lambda'}^{\rho\rho} $ does not depend on the way we connect $\lambda$ to $\lambda'$ by a sequence of diagonal exchanges. 
\end{lem}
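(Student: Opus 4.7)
The plan is to invoke the Harer--Penner--Hatcher theorem (cited in the paper as \cite{Har, Pen, Hat}) that any two sequences of diagonal exchanges between the same pair of ideal triangulations $\lambda$, $\lambda'$ of $S$ differ by finitely many elementary moves of three types: (i) the insertion of a flip followed immediately by its reverse; (ii) the interchange of two flips whose supporting squares are disjoint; and (iii) the pentagon relation, in which five successive flips inside a pentagonal region return to the initial triangulation. It therefore suffices to check that $L_{\lambda\lambda'}^{\rho\rho}$ is invariant, up to scalar, under each of these three local modifications of the sequence $\lambda=\lambda^{(0)}, \lambda^{(1)}, \dots, \lambda^{(l)}=\lambda'$.

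The key input for all three checks is the tensor-product property of the one-step intertwiners that was emphasized just before the statement of the lemma: with respect to the splitting of $S$ along every edge of $\lambda^{(k)}$ except the exchanged diagonal, $L_{\lambda^{(k)}\lambda^{(k+1)}}^{\rho\rho}$ decomposes as the tensor product of an isomorphism $W_1\otimes W_2 \to V_1\otimes V_2$ living on the ambient square, and of identities on the remaining triangle factors. This is precisely a local Fusion property for the single-flip intertwiners.

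For move (i), the composite $L^{\rho\rho}_{\lambda^{(k+1)}\lambda^{(k)}}\circ L^{\rho\rho}_{\lambda^{(k)}\lambda^{(k+1)}}$ tensor-splits as a self-intertwiner of the local representation $\rho_{\lambda_Q}$ on the supporting square $Q$, times the identity on the complementary pieces. Since $Q$ is a polygon, Lemma~\ref{lem:IntertwinersPolygons} guarantees that $\rho_{\lambda_Q}$ is irreducible, and Schur's lemma forces this self-intertwiner to be a scalar. For move (ii), the supporting squares $Q_1$ and $Q_2$ of the two flips are disjoint; the tensor-product property implies that the two single-flip intertwiners act on disjoint tensor factors of $V_{Q_1}\otimes V_{Q_2}\otimes V_R$, and consequently commute literally, not merely up to scalar. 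For move (iii), we split $S$ along every edge lying outside the pentagonal region $P$ where the five flips take place. Iterating the tensor-product property along the five-flip sequence presents the pentagon composite as the tensor product of a self-intertwiner of $\rho_{\lambda_P}$ with the identity on the $R$-factor; since $P$ is again a polygon, Lemma~\ref{lem:IntertwinersPolygons} makes $\rho_{\lambda_P}$ irreducible and Schur concludes the argument.

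The main obstacle I expect is not any one of the three verifications in isolation, but the bookkeeping needed to see that the tensor-product splittings used at successive steps of a sequence of flips are compatible with a single global splitting of $S$. In move (iii), for example, the supporting square of each of the five consecutive flips differs from the others, so one must check that all the intermediate triangulations of the complementary surface $R$ coincide with a single fixed $\lambda_R$, ensuring that the identity pieces match across successive factors. This amounts to unwinding the construction of the one-step intertwiners in Lemma~\ref{lem:IntertwDiagEx} uniformly along the whole sequence and confirming that the induced local representations of the triangle algebras outside $P$ are carried rigidly, up to the same scalar rescalings already handled in Lemma~\ref{lem:IntertwSameTriang}.
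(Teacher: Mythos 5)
Your proposal is correct and follows essentially the same approach as the paper: invoke the Harer--Penner--Hatcher presentation of the flip graph, reduce each of the three elementary moves to a self-intertwiner of the local representation of the Chekhov--Fock algebra of the split surface (pentagon or square(s) plus residual triangles), and conclude by irreducibility from Lemma~\ref{lem:IntertwinersPolygons}. The ``bookkeeping'' concern you raise at the end is real but already resolved by the observation stated immediately before the lemma: the one-step intertwiner $L_{\lambda^{(k)}\lambda^{(k+1)}}^{\rho\rho}$ satisfies a fusion property with respect to splitting $S$ along \emph{every} edge other than the exchanged diagonal, and since all five flips of the pentagon move occur inside $P$, every $\mu^{(i)}$ induces the same triangulation of the complementary triangles, so the splittings at successive steps are automatically compatible.
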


\begin{proof}
By a result of J. Harer \cite{Har} and R. Penner \cite{Pen}, any two such sequences
$\lambda=\lambda^{(0)}$,
$\lambda^{(1)}$, \ldots, $\lambda^{(l)}
=\lambda'$ and
$\lambda=\lambda'^{(0)}$,
$\lambda'^{(1)}$, \ldots, $\lambda'^{(l')}
=\lambda'$ of diagonal exchanges  can be
related to each other by successive applications of
the following moves and of their inverses.
\begin{enumerate}

\item (Round-trip Move) if $\lambda^{(k-1)}=\lambda^{(k+1)}$,
replace \dots, $\lambda^{(k-2)}$, $\lambda^{(k-1)}$, 
$\lambda^{(k)}$, $\lambda^{(k+1)}$, $\lambda^{(k+2)}$,
\dots\ by \dots, $\lambda^{(k-2)}$, $\lambda^{(k-1)}$, $\lambda^{(k+2)}$,
\dots;

\item (Distant Commutativity Move) if the consecutive diagonal exchanges in   \dots, 
$\lambda^{(k-1)}$, $\lambda^{(k)}$, $\lambda^{(k+1)}$,
\dots\ occur in squares $Q_{k-1}$, $Q_k$ whose interiors are disjoint, replace these two diagonal exchanges by  \dots, 
$\lambda^{(k-1)}$, $\mu^{(k)}$, $\lambda^{(k+1)}$, where $\mu^{(k)}$ is obtained from $\lambda^{(k-1)}$ by performing a diagonal exchange on the second square $Q_{k+1}$ (so that $\lambda^{(k+1)}$ is obtained from $\mu^{(k)}$ by performing a diagonal exchange in $Q_k$);

\item (Pentagon Move)
replace \dots, $\lambda^{(k)}$, \dots\
by
\dots
$\lambda^{(k)}=\mu^{(1)}$, $\mu^{(2)}$, $\mu^{(3)}$, $\mu^{(4)}$, $\mu^{(5)}$, $\mu^{(6)}=\lambda^{(k)}$, \dots\ where each $\mu^{(i)}$ is obtained from $\mu^{(i-1)}$ by a diagonal exchange in a pentagon as in Figure~\ref{fig:Pentagon}. 
\end{enumerate}

\begin{figure}[htb]
\SetLabels
(  0.13*  0.53)   $\mu^{(1)}  $ \\
( 0.48 * 0.88 )   $\mu^{(2)}  $ \\
(  0.9*  0.56)   $\mu^{(3)}  $ \\
( 0.7 *  0.09)   $\mu^{(4)}  $ \\
(  0.3* 0.15 )   $\mu^{(5)}  $ \\
\endSetLabels
\centerline{\AffixLabels
{\includegraphics[width=8cm]{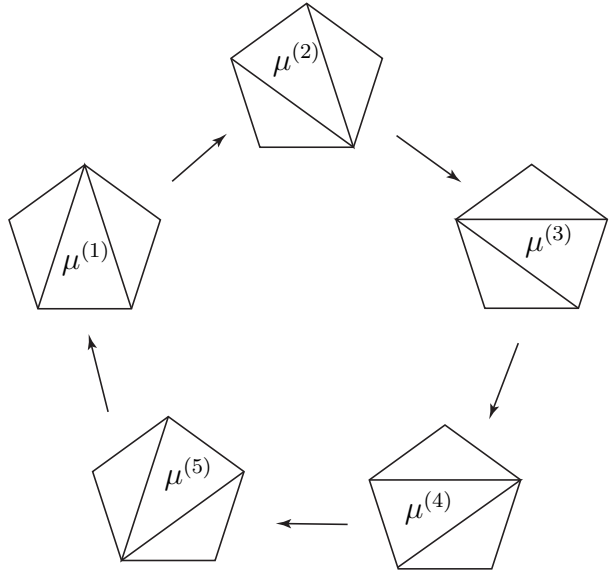}}}
\caption{The Pentagon Move}
\label{fig:Pentagon}
\end{figure}

 We consequently need to show that the $L_{\lambda\lambda'}^{\rho\rho} $ remains unchanged under these moves.
 
 Consider the Pentagon Move, with notation as above. In particular, the Pentagon Move inserts a term $L_{\mu^{(1)}\mu^{(2)}}^{\rho\rho} \circ L_{\mu^{(2)}\mu^{(3)}}^{\rho\rho}\circ L_{\mu^{(3)}\mu^{(4)}}^{\rho\rho}\circ L_{\mu^{(4)}\mu^{(5)}}^{\rho\rho}\circ L_{\mu^{(5)}\mu^{(6)}}^{\rho\rho}$  into the expression giving $L_{\lambda\lambda'}^{\rho\rho} $.

 Let $P$ be the pentagon where this move takes place. The  surface $S'$ obtained by splitting $S$ along those two edges of $\mu^{(1)}=\lambda^{(k)}$ that are not in the interior of $P$ consists of $P$ and of $n-3$ disjoint triangles $T_4$, $T_5$, \dots, $T_n$, corresponding to those faces of $\mu^{(1)}$ that are not in $P$. Note that each $\mu^{(i)} $ induces an ideal triangulation $\mu^{(i)}_{S'}$ of $S'$. 
 
 By our observation above Lemma~\ref{lem:IntertwWellDefined}, we can split  each of the representations $\rho_{\mu^{(i)}} \col \T_{\mu^{(i)}}^q \to \End(V_{\mu^{(i)}})$ as a fusion of representations $\rho_{\mu^{(i)}_{S'}} \col \T_{\mu^{(i)}_{S'}}^q \to \End(V_{\mu^{(i)}})$ in such a way that each each $\rho_{\mu^{(i)}_{S'}} \circ \Phi_{\mu^{(i)}_{S'}\mu^{(i+1)}_{S'}}^q$ is isomorphic to $\rho_{\mu^{(i+1)}_{S'}} $ by the isomorphism $L_{\mu^{(i)}\mu^{(i+1)}}^{\rho\rho} \col V_{\mu^{(i+1)}} \to V_{ \mu^{(i)}}$. 
 
 As a consequence, the composition $L_{\mu^{(1)}\mu^{(2)}}^{\rho\rho} \circ L_{\mu^{(2)}\mu^{(3)}}^{\rho\rho}\circ L_{\mu^{(3)}\mu^{(4)}}^{\rho\rho}\circ L_{\mu^{(4)}\mu^{(5)}}^{\rho\rho}\circ L_{\mu^{(5)}\mu^{(6)}}^{\rho\rho}$ is an isomorphism between $\rho_{\mu^{(6)}_{S'}}=\rho_{\lambda^{(k)}_{S'}}$ and 
\begin{equation*}
\begin{split}
&\rho_{\mu^{(1)}_{S'}} \circ \Phi_{\mu^{(1)}_{S'}\mu^{(2)}_{S'}}^q \circ \Phi_{\mu^{(2)}_{S'}\mu^{(3)}_{S'}}^q \circ \Phi_{\mu^{(3)}_{S'}\mu^{(4)}_{S'}}^q \circ \Phi_{\mu^{(4)}_{S'}\mu^{(5)}_{S'}}^q \circ \Phi_{\mu^{(5)}_{S'}\mu^{(6)}_{S'}}^q \\
= \,& \rho_{\mu^{(1)}_{S'}} \circ \Phi_{\mu^{(1)}_{S'}\mu^{(6)}_{S'}}^q =\rho_{\mu^{(1)}_{S'}} =\rho_{\lambda^{(k)}_{S'}}.
\end{split}
\end{equation*}

By our analysis of the case of disjoint union of polygons in Lemma~\ref{lem:IntertwinersPolygons}, the local representation $\rho_{\lambda^{(k)}_{S'}}$ is irreducible. Therefore, $L_{\mu^{(1)}\mu^{(2)}}^{\rho\rho} \circ L_{\mu^{(2)}\mu^{(3)}}^{\rho\rho}\circ L_{\mu^{(3)}\mu^{(4)}}^{\rho\rho}\circ L_{\mu^{(4)}\mu^{(5)}}^{\rho\rho}\circ L_{\mu^{(5)}\mu^{(6)}}^{\rho\rho}$ is a scalar multiple of the identity. 

This proves that the Pentagon Move does not change $L_{\lambda\lambda'}^{\rho\rho} $ up to scalar multiplication. 

The proof that the Roundtrip and Distant Commutativity Moves do not change $L_{\lambda\lambda'}^{\rho\rho} $ is essentially identical, replacing the pentagon $P$ by the one or two squares where these moves take place. 
\end{proof}

We have now defined $L_{\lambda\lambda'}^{\rho\rho} $ for every $\lambda$, $\lambda'$ and $\rho$, up to scalar multiplication. 

Having considered the case where $\rho'=\rho$, we now turn to the case where $\lambda'=\lambda$

If $\rho$ and $\rho'$ are isomorphic local representations of the quantum Teichm\"uller space, Corollary~\ref{cor:LocalIsom} shows that, for every ideal triangulation $\lambda$, we can choose the isomorphism $L_{\lambda\lambda}^{\rho\rho'} \col V_\lambda' \to V_\lambda$ between the corresponding local representations $\rho_\lambda\col \T_\lambda^q \to \End(V_\lambda)$ and  $\rho'_\lambda\col \T_\lambda^q \to \End(V_\lambda')$ to be tensor-split, namely to be an isomorphism of local representations. In addition, the proof of Lemma~\ref{lem:IntertwSameTriang} shows that this  $L_{\lambda\lambda}^{\rho\rho'}$ is unique up to scalar multiplication.  We henceforth make this choice for any such  $L_{\lambda\lambda}^{\rho\rho'}$.

We now combine the two cases.

\begin{lem}
\label{lem:IntertwCommute}
$$ L_{\lambda\lambda'}^{\rho\rho} \circ L_{\lambda'\lambda'}^{\rho'\rho'} \,\dot= \, L_{\lambda\lambda}^{\rho\rho'}  \circ  L_{\lambda\lambda'}^{\rho'\rho'}.$$
\end{lem}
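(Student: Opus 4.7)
The plan is to check that both sides of the asserted identity are intertwiners between the same pair of representations of $\T_{\lambda'}^q$, and then to reduce the proof of their proportionality to an argument on a single square, where irreducibility does the rest. First, a direct calculation using the defining intertwining property of each factor shows that both $L_{\lambda\lambda'}^{\rho\rho}\circ L_{\lambda'\lambda'}^{\rho'\rho'}$ and $L_{\lambda\lambda}^{\rho\rho'}\circ L_{\lambda\lambda'}^{\rho'\rho'}$ conjugate $\rho'_{\lambda'}(P')$ into $\rho_\lambda\circ \Phi_{\lambda\lambda'}^q(P')$ for every $P'\in \T_{\lambda'}^q$; the fact that $L_{\lambda'\lambda'}^{\rho'\rho'}$ is a scalar multiple of the identity by Lemma~\ref{lem:IntertwSameTriang} is used here to match up the source vector spaces of the two compositions.

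Next, I would induct on the minimum number $\ell$ of diagonal exchanges needed to connect $\lambda$ to $\lambda'$. The base case $\ell=0$ is immediate since $L_{\lambda\lambda}^{\rho\rho}$ and $L_{\lambda\lambda}^{\rho'\rho'}$ are both scalar multiples of the identity by Lemma~\ref{lem:IntertwSameTriang}. For the inductive step with $\ell \geq 2$, choose an intermediate triangulation $\lambda''$ with $\lambda, \lambda''$ differing by $\ell-1$ exchanges and $\lambda'', \lambda'$ by one. Using the composition relation $L_{\lambda\lambda'}^{\rho\rho} \dot= L_{\lambda\lambda''}^{\rho\rho}\circ L_{\lambda''\lambda'}^{\rho\rho}$ built into the very definition of $L_{\lambda\lambda'}^{\rho\rho}$ (and its analogue for $\rho'$), the $\ell=1$ case lets me commute $L_{\lambda''\lambda'}^{\rho\rho}$ past the rep-switching intertwiner at $\lambda'$, after which the inductive hypothesis applies to the remaining $\ell-1$ exchanges between $\lambda$ and $\lambda''$.

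This reduces everything to the case where $\lambda$ and $\lambda'$ differ by a single diagonal exchange, which I would handle through the Fusion Relation. Split $S$ along all edges of $\lambda$ other than the exchanged diagonal, isolating a square $Q$ containing the two faces adjacent to the diagonal and a disjoint union $R$ of the remaining triangles. The construction of $L_{\lambda\lambda'}^{\rho\rho}$ in the proof of Lemma~\ref{lem:IntertwDiagEx} already provides a tensor decomposition with respect to this splitting, and the tensor-split operators $L_{\lambda\lambda}^{\rho\rho'}$ and $L_{\lambda'\lambda'}^{\rho\rho'}$ are fusion-compatible by their very nature. Up to scalars, each of the four factors in the asserted identity therefore decomposes as the tensor product of an operator living on the square with a tensor-split operator on $R$; the $R$-factors produce the same tensor product of triangle intertwiners on both sides, so the identity collapses to its analogue on the square $Q$ alone.

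Finally, on the square the local representations of $\T_{\lambda_Q}^q$ and $\T_{\lambda_Q'}^q$ are irreducible by Lemma~\ref{lem:IntertwinersPolygons}, so both sides of the square-level identity are intertwiners between the same pair of irreducible representations of $\T_{\lambda_Q'}^q$ and Schur's lemma forces them to be proportional. The main obstacle I expect is in the third step: one must track carefully that the two-stage construction of $L_{\lambda\lambda'}^{\rho\rho}$ from Lemma~\ref{lem:IntertwDiagEx}, itself the composition of a pure fusion intertwiner with a rep-switching intertwiner at $\lambda'$, recombines with the tensor-split operators so that all contributions off the square genuinely cancel, leaving precisely the single-diagonal-exchange square identity that Schur resolves.
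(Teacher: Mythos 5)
Your proposal is correct and is essentially the argument the paper is gesturing at: the paper's entire proof is the single sentence ``It suffices to consider the case where $\lambda$ and $\lambda'$ differ by a diagonal exchange, in which case the result is immediate from definitions.'' Your reduction to a single diagonal exchange (using that $L_{\lambda\lambda'}^{\rho\rho}$ is \emph{defined} as a composition of one-exchange intertwiners, together with the already-established well-definedness of Lemma~\ref{lem:IntertwWellDefined}) and your resolution of the one-exchange case (fusion decomposition plus Schur on the square and on each remaining triangle, exactly as underwritten by Lemma~\ref{lem:IntertwinersPolygons} and the tensor-split form of the one-exchange intertwiner stated just above Lemma~\ref{lem:IntertwWellDefined}) are precisely what ``immediate from definitions'' is meant to unpack.

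One small point worth flagging: the lemma as printed has $L_{\lambda'\lambda'}^{\rho'\rho'}$ on the left, but for the composition $L_{\lambda\lambda'}^{\rho\rho}\circ L_{\lambda'\lambda'}^{?}$ to have domain $V'_{\lambda'}$ (matching the right-hand side) and codomain $V_{\lambda'}$ (matching the domain of $L_{\lambda\lambda'}^{\rho\rho}$), that superscript almost certainly should read $\rho\rho'$; this is also what is used in the definition of $L_{\lambda\lambda'}^{\rho\rho'}$ immediately after the lemma. Your remark that ``$L_{\lambda'\lambda'}^{\rho'\rho'}$ is a scalar multiple of the identity... is used to match up the source vector spaces'' takes the typo literally and does not actually reconcile the domains (a scalar on $V'_{\lambda'}$ still leaves a mismatch with $V_{\lambda'}$ unless $\rho=\rho'$). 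With the corrected superscript $\rho\rho'$ your first step goes through cleanly: $L_{\lambda'\lambda'}^{\rho\rho'}$ conjugates $\rho'_{\lambda'}$ into $\rho_{\lambda'}$, and then $L_{\lambda\lambda'}^{\rho\rho}$ conjugates $\rho_{\lambda'}$ into $\rho_\lambda\circ\Phi_{\lambda\lambda'}^q$, as desired. The rest of your argument is unaffected.
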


\begin{proof}
It suffices to consider the case where $\lambda$ and $\lambda'$ differ by a diagonal exchange, in which case the result is immediate from definitions. 
\end{proof}

In the general case, we now define  $L_{\lambda\lambda'}^{\rho\rho'} =  L_{\lambda\lambda'}^{\rho\rho} \circ L_{\lambda'\lambda'}^{\rho'\rho'} \,\dot= \, L_{\lambda\lambda}^{\rho\rho'}  \circ  L_{\lambda\lambda'}^{\rho'\rho'} $ for every $\rho$, $\rho'$, $\lambda$, $\lambda'$.

To complete the proof of Theorem~\ref{thm:IntertwiningUnique}, we need to check that the $L_{\lambda\lambda'}^{\rho\rho'} $ just defined satisfy the Composition and Fusion Relations. These are automatic provided that we suitably choose the sequence of diagonal exchanges $\lambda = \lambda^{(0)}$, $\lambda^{(1)}$, \dots, $\lambda^{(l-1)}$, $\lambda^{(l)}=\lambda'$ connecting $\lambda$ to $\lambda'$, and using Lemma~\ref{lem:IntertwCommute}. 

This concludes our proof of Theorem~\ref{thm:IntertwiningUnique}. \qed

In the intertwiner $L_{\lambda\lambda'}^{\rho\rho'}$, a lot of the data is actually redundant. Indeed, the following lemma shows that one does not need to know the whole representations $\rho$ and $\rho'$ of the quantum Teichm\"uller space, only the parts that are relevant to $\lambda$ and $\lambda'$. (The extraneous data is however required to guarantee uniqueness in the statement of Theorem~\ref{thm:IntertwiningUnique}). 

\begin{lem}
\label{lem:IntertwLocallyDefined}
Up to scalar multiplication, the intertwiner $L_{\lambda\lambda'}^{\rho\rho'} \col V'_{\lambda' }\to V_\lambda$ provided by Theorem~\ref{thm:IntertwiningUnique} depends only on the ideal triangulations $\lambda$ and $\lambda'$  on the local representations  $\rho_\lambda\col \T_\lambda^q \to \End(V_\lambda)$ and  on $\rho'_{\lambda' }\col \T_{\lambda' }^q \to \End(V_{\lambda' }')$ induced by $\rho$ and $\rho'$. 
\end{lem}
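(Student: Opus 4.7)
The plan is to reduce the statement to the Composition Relation of Theorem~\ref{thm:IntertwiningUnique}, using the uniqueness part of Proposition~\ref{prop:WeightsGiveRep} to compare different global extensions of the same local representations of $\T_\lambda^q$ and $\T_{\lambda'}^q$.

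Suppose we are given two pairs of isomorphic local representations $(\rho, \rho')$ and $(\tilde\rho, \tilde\rho')$ of $\T_S^q$ with the property that $\rho_\lambda = \tilde\rho_\lambda$ and $\rho'_{\lambda'} = \tilde\rho'_{\lambda'}$. The goal is to show $L_{\lambda\lambda'}^{\rho\rho'} \dot = L_{\lambda\lambda'}^{\tilde\rho\tilde\rho'}$. First I would invoke the uniqueness part of Proposition~\ref{prop:WeightsGiveRep}: since $\rho$ and $\tilde\rho$ are two extensions of the same $\rho_\lambda$, they are isomorphic as local representations of $\T_S^q$; likewise $\rho' \cong \tilde\rho'$. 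Theorem~\ref{thm:IntertwiningUnique} applied to the pairs $(\rho, \tilde\rho)$ and $(\rho', \tilde\rho')$ then supplies intertwining operators $L_{\mu\mu''}^{\rho\tilde\rho}$ and $L_{\mu\mu''}^{\rho'\tilde\rho'}$ for all pairs of ideal triangulations.

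The key step is to apply the Composition Relation twice to decompose
$$L_{\lambda\lambda'}^{\tilde\rho\tilde\rho'} \dot = L_{\lambda\lambda}^{\tilde\rho\rho} \circ L_{\lambda\lambda'}^{\rho\rho'} \circ L_{\lambda'\lambda'}^{\rho'\tilde\rho'},$$
and then to recognize the outer factors as self-intertwiners: since $\tilde\rho_\lambda = \rho_\lambda$, the identity map of $V_\lambda$ is a tensor-split self-intertwiner of $\rho_\lambda$, and by the characterization of $L_{\lambda\lambda}^{\sigma\sigma''}$ in the proof of Lemma~\ref{lem:IntertwSameTriang} as the unique tensor-split intertwiner (up to scalar), it follows that $L_{\lambda\lambda}^{\tilde\rho\rho} \dot = \Id_{V_\lambda}$. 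The same argument gives $L_{\lambda'\lambda'}^{\rho'\tilde\rho'} \dot = \Id_{V'_{\lambda'}}$, whence $L_{\lambda\lambda'}^{\tilde\rho\tilde\rho'} \dot = L_{\lambda\lambda'}^{\rho\rho'}$.

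The main obstacle I anticipate is the bookkeeping involved in keeping several notions of isomorphism cleanly separated (between representations of triangle algebras, local representations of $\T_\lambda^q$, and local representations of $\T_S^q$), in particular verifying that the same vector space $V_\lambda$ and tensor-product decomposition is used in both extensions $\rho$ and $\tilde\rho$, so that the identity map really is a self-intertwiner to which Lemma~\ref{lem:IntertwSameTriang} applies. Beyond this, the argument is a formal manipulation of the Composition Relation together with the tensor-split uniqueness at a fixed ideal triangulation.
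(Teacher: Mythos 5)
Your proof is correct and takes essentially the same route as the paper: decompose via the Composition Relation and observe that the self-intertwiners $L_{\lambda\lambda}^{\tilde\rho\rho}$ and $L_{\lambda'\lambda'}^{\rho'\tilde\rho'}$ are scalar multiples of the identity because the induced local representations at $\lambda$ (resp.\ $\lambda'$) coincide. The paper does this in two steps (first varying $\rho'$ while fixing $\rho$, then symmetrically), whereas you combine both in a single three-factor decomposition; you also spell out the tensor-split uniqueness argument that the paper condenses into ``the identity by construction,'' but the content is the same.
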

\begin{proof}
Let $\rho''$ be another isomorphic representation, such that the induced local representation $\rho''_{\lambda'}$ coincides with $\rho'_{\lambda'}$, although $\rho''_{\mu}$ may be different from $\rho'_{\mu}$ when $\mu \not= \lambda'$. 

By the Composition Relation, $L_{\lambda\lambda'}^{\rho\rho''}  \dot = L_{\lambda\lambda'}^{\rho\rho'}  \circ L_{\lambda'\lambda'}^{\rho'\rho''}$. However, $ L_{\lambda'\lambda'}^{\rho'\rho''}$ is the identity by construction, so that $L_{\lambda\lambda'}^{\rho\rho''}  \dot = L_{\lambda\lambda'}^{\rho\rho'}  $.  

This proves that $L_{\lambda\lambda'}^{\rho\rho''}$ is independent of the $\rho'_{\mu}$ with $\mu \not= \lambda'$. 

By the same argument, it is also independent of the $\rho_{\mu}$ with $\mu \not= \lambda$. 
\end{proof}

\section{The Kashaev bundle}
\label{sect:Kashaev}

We now investigate a construction which was suggested to us by Rinat Kashaev. It will require to select $N$--th roots $x_i^{\frac1N}$ for the weights $x_i\in \C^*$ associated by an enhanced homomorphism to the edges of an ideal triangulation. 

\subsection{Choices of $N$--th roots}

In  the space of (conjugacy classes of) enhanced homomorphisms $(r,
\{\xi_\pi\}_{\pi \in
\Pi})$, we consider a subset $\widetilde\U$ with the following properties:
\begin{enumerate}

\item every $r\in \widetilde \U$ is peripherically generic, namely realizes every ideal triangulation $\lambda$;

\item for every edge $\lambda_i$ of an ideal triangulation $\lambda$ and for every enhanced homomorphism $r\in \widetilde \U$, we are given a preferred $N$--th root $x_i^{\frac1N}$ for the weight $x_i$ associated to $\lambda_i$ and $r$ by Proposition~\ref{prop:Pleated},  and this root $x_i^{\frac1N}$ depends continuously on $r$;

\item the product $x_1^{\frac1N}x_2^{\frac1N} \dots x_n^{\frac1N}$ of the $N$--th roots associated to $r\in \widetilde\U$ and to the edges of an ideal triangulation $\lambda$ depends only on $r$, not on $\lambda$; 

\item the space $\widetilde\U$ is invariant under the action of the mapping class group $\pi_0\, \Diff(S)$. 

\end{enumerate}

For Condition (3), recall that Lemma~\ref{lem:PeriphLoad} that the peripheral load $x_1x_2\dots x_n$ depends only on $r$, not on $\lambda$. For Condition (5), note that it is immediate from the construction of shear-bend coordinates that $y_j = x_i$.

There are several geometric examples that we have in mind.

\subsubsection*{The enhanced Teichm\"uller space}  The first one is the case where $\widetilde U$ is the \emph{enhanced Teichm\"uller space} $\T(S)$, consisting of those enhanced homomorphisms which are injective, have discrete image, and are valued in $\Isom (\HH^2) \subset \Isom(\HH^3)$.  In this case, the shear coordinates $x_i$ are always positive reals, so that there is a well defined positive real $N$--root $x_i^{\frac1N}$. This choice is natural enough that all conditions are automatically satisfied.

\subsubsection*{The cusped Teichm\"uller space} A subset of the previous case, which occurs in many geometric contexts,  is the \emph{cusped Teichm\"uller space} $\T_{\mathrm c}(S)$, consisting of those homomorphisms which have discrete image, are valued in $\Isom (\HH^2) \subset \Isom(\HH^3)$, and send each peripheral subgroup of $\pi_1(S)$ to a parabolic subgroup of $\Isom (\HH^2) $. The parabolicity condition implies that such a group homomorphism $r$ admits a unique enhancement. It also implies that the total peripheral load $h_r=x_1x_2\dots x_n$ is equal to 1, so that its positive real $N$--th root $x_1^{\frac1N} x_2^{\frac1N}\dots x_n^{\frac1N}$ is also equal to 1. 

\subsubsection*{The quasifuchsian space}
The cusped Teichm\"uller space is contained in a larger space $\mathcal {QF}(S)$ of \emph{quasifuchsian homomorphisms}. The space $\mathcal {QF}(S)$ is the interior of the space of all homomorphisms $r\col \pi_1(S) \to \Isom (\HH^3)$ with discrete image and sending peripheral subgroups of $\pi_1(S)$ to parabolic subgroups of $\Isom(\HH^3)$. As such, this space arises is much of hyperbolic geometry.  Again, a quasifuchsian homomorphism admits a unique enhancement. The space  $\mathcal {QF}(S)$ is simply connected, so that the choice of positive real $N$--th roots $x_i^{\frac1N}$ on $\T_{\mathrm c}(S)$ uniquely extends to a continuous choice of complex roots on $\mathcal{QF}(S)$. Since the required conditions are satisfied on the cusped Teichm\"uller space $\T_{\mathrm c}(S)$, they automatically hold over all of $\mathcal {QF}(S)$ by continuity. In addition, the $N$--th root $x_1^{\frac1N} x_2^{\frac1N}\dots x_n^{\frac1N}$ of the total peripheral load $h_r$ of $r\in\mathcal{QF}(S)$ is also equal to 1.

\subsubsection*{The intrinsic closure of the quasifuchsian space}
Finally, we can consider the \emph{intrinsic closure} $\overline{\mathcal{QF}}(S)$ of $\mathcal{QF}(S)$, defined as follows. The technique of pleated surfaces shows that the space of enhanced homomorphisms is a manifold on a neighborhood of the closure of $\mathcal{QF}(S)$. Endow this neighborhood with an arbitrary riemannian metric, and endow $\mathrm{QF}(S)$ with the metric for which the distance between $r$ and $r'$  is the infimum of the lengths of all curves joining $r$ to $r'$ and completely contained in $\mathcal{QF}(S)$. Then, $\overline{\mathcal{QF}}(S)$ is defined as the completion of $\mathcal{QF}(S)$ for this metric. One easily sees that, up to homeomorphism, this intrinsic completion is independent of our initial choice of a riemannian metric. This intrinsic closure is different from the usual closure because of the phenomenon of ``self-bumping'' \cite{McM, AndCan, AndCanMcC, CaMcC}. There is a natural projection from $\overline{\mathcal{QF}}(S)$ to the closure of $\mathcal{QF}(S)$, which is one-to-one at most points; indeed, self-bumping can only occur at points with accidental parabolics, corresponding to non-peripheral curves that are sent to parabolic elements. The definition is specially designed
 so that the choices of $N$--th roots $x_i^{\frac1N}$ over $\mathcal{QF}(S)$ continuously extend to this intrinsic closure $\overline{\mathcal{QF}}(S)$. Technically, $\overline{\mathcal{QF}}(S)$  does not quite fit within the framework mentioned at the beginning of this section, because it is more than just a subset of the space of enhanced homomorphism, but the abuse of language will be convenient. 
 
 \subsection{Explicit local representations}
 The $N$--th roots $x_i^{\frac1N}$ enable us to provide an explicit description of the local representation associated by Proposition~\ref{prop:LocalRepEnhanced} to an enhanced homomorphism $r\in \widetilde\U$ and to a choice of global load $h$. They also provide us with a uniform choice for this global load $h = \sqrt[N]{x_1x_2\dots x_n}$ by taking $h = q^{2k} x_i^{\frac1N}x_2^{\frac1N} \dots x_n^{\frac1N}$ for some $N$--th root of unity $q^{2k}$.
 
 First fix a root of unity $q^{2k}$. 
 
 Then, for every ideal triangulation $\lambda$, choose a local representation $\rho_\lambda\col \T_\lambda^q \to \End(V_\lambda)$ of its Chekhov-Fock algebra $\T_\lambda^q = \C[X_1^{\pm1}, X_2^{\pm1}, \dots, X_n^{\pm1}]_\lambda^q$ of $\lambda$ sending each $X_i^N$ to the identity  $\Id_{V_\lambda}$ and sending the principal central element $H$ to $q^{2k}\Id_{V_\lambda}$. Namely,  the local representation $\rho_\lambda$ is classified by edge weights all equal to 1 and by the central load $q^{2k}$. We will call such a representation \emph{$q^{2k}$--standard}. 
 
 Once we have made these choices,  we can define a local representation $\rho_{\lambda,r} \col \T_\lambda^q \to \End(V_\lambda)$ for every $r\in \widetilde\U$, as follows. Let $x_1$, $x_2$, \dots $x_n\in \C^*$ be the edge weights associated to $\lambda$ by $r$, and let $x_1^{\frac1N}$, $x_2^{\frac1N}$, \dots, $x_n^{\frac1N}$ be their $N$--th roots specified by $\widetilde U$. Then $\rho_{\lambda,r}$ is defined by the property that $\rho_{\lambda,r}(X_i) = x_i^{\frac1N}\rho_\lambda(X_i)$ for the generator $X_i$ associated to the $i$-th edge of $\lambda$. By construction, $\rho_{\lambda,r}$ is classified by the edge weights $x_i$ and by the central load $h = q^{2k} x_i^{\frac1N}x_2^{\frac1N} \dots x_n^{\frac1N}$, in the sense of Proposition~\ref{prop:ClassifyLocalRep}. Namely, in the sense of Proposition~\ref{prop:LocalRepEnhanced}, $\rho_{\lambda,r}$ is classified by the enhanced homomorphism $r\in \widetilde\U$ and by the $N$--th root $h = q^{2k} x_i^{\frac1N}x_2^{\frac1N} \dots x_n^{\frac1N}$ of its total peripheral load $h_r=x_1x_2\dots x_n$. 
 
 The construction is specially designed that, for every enhanced homomorphism $r\in \widetilde\U$, the family
 $$
\rho_r = \bigl \{ \rho_{\lambda,r} \col \T_\lambda^q \to \End(V_\lambda) ) \bigr\}_{\lambda \in \Lambda(S)}
 $$
 forms a representation of the quantum Teichm\"uller space, classified by the enhanced homomorphism $r\in \widetilde\U$ and by the $N$--th root $h = q^{2k} x_i^{\frac1N}x_2^{\frac1N} \dots x_n^{\frac1N}$ of its total peripheral load $h_r=x_1x_2\dots x_n$.

 \subsection{The Kashaev bundle} 
 
 In the construction of the previous section, suppose that we are given another family of $q^{2k}$--standard representations $\rho_\lambda' \col \T_\lambda^q \to \End(V_\lambda')$,  classified by edge weights $x_i=1$ and central load $h = q^{2k}$.  We then get a new representation $\rho_r'$ of the quantum Teichm\"uller space. Theorem~\ref{thm:IntertwiningUnique} provides a family of intertwining operators $L_{\lambda\lambda'}^{\rho_r\rho_r'} \col V_{\lambda'}' \to V_\lambda$. These intertwining operators are only defined up to scalar multiplication, but they induce a uniquely determined projective isomorphism $\PP( L_{\lambda\lambda'}^{\rho_r\rho_r'}) \col \PP(V_{\lambda'}' )\to \PP(V_\lambda)$. We can then interpret the Composition Relation in Theorem~\ref{thm:IntertwiningUnique} as a cocycle condition defining a fiber bundle over $\widetilde\U$.

 More precisely, given an $N$--th root of unity $q^{2k}$, define 
 $$
 \K_{q^{2k}} (\widetilde \U)= \bigsqcup_{\lambda,\, \rho_\lambda} \widetilde\U \times \PP(V_\lambda) / \sim
 $$
 where: 
 \begin{enumerate}
\item $\lambda$ ranges over all ideal triangulations of $S$; 

\item $\rho_\lambda$ ranges over  all $q^{2k}$--standard  local representation $\T_\lambda^q \to \End(V_\lambda)$ of the Chekhov-Fock algebra of $\lambda$; 

\item $\PP(V_\lambda)$ is the projective space of $V_\lambda$; 

\item the equivalence relation $\sim$ identifies $(r,v) \in \widetilde\U \times  \PP(V_\lambda)$ to $(r',v') \in \widetilde\U \times  \PP(V_{\lambda'}')$ exactly when $r=r'$ and $v= \PP( L_{\lambda\lambda'}^{\rho_r\rho_r'})(v')$, where $ \PP( L_{\lambda\lambda'}^{\rho_r\rho_r'})$ is the projectivized intertwining operator associated to any two local representations $\rho_r$ and $\rho_r'$ of the quantum Teichm\"uller space respectively containing $\rho_{\lambda,r}$ and $\rho_{\lambda',r}'$. 

\end{enumerate}
Note that $ \PP( L_{\lambda\lambda'}^{\rho_r\rho_r'})$ is independent of the choice of $\rho_r$ and $\rho_r'$ by Lemma~\ref{lem:IntertwLocallyDefined}, so that $\sim$ is well-defined. Also, the Composition Relation is exactly what is needed to make sure that $\sim$ is an equivalence relation, while the continuity of the roots $x_i^{\frac1N}$ guarantees that $ \PP( L_{\lambda\lambda'}^{\rho_r\rho_r'})$ depends continuously on $r$. 

There is a well defined map $\widetilde\pi\col \K_{q^{2k}} (\widetilde \U) \to \widetilde \U$ which associates $r\in \widetilde\U$ to the class of $(r,v) \in\widetilde \U \times \PP(V_\lambda)$.

If we fix an ideal triangulation $\lambda$ and a $q^{2k}$--standard  local representation $\rho_\lambda\col \T_\lambda^q \to \End(V_\lambda)$, the quotient map $ \widetilde\U \times \PP(V_\lambda) \to  \K_{q^{2k}} (\widetilde \U)$ is of course a homeomorphism. In particular:

\begin{prop}
$\widetilde\pi\col \K_{q^{2k}} (\widetilde \U) \to \widetilde \U$  is a trivial bundle, with fiber $\mathbb{CP}^{mN-1}$. 
\end{prop}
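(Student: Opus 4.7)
The plan is to exhibit an explicit global trivialization. Fix once and for all an ideal triangulation $\lambda_0$ of $S$ together with a $q^{2k}$--standard local representation $\rho_{\lambda_0}\col \T_{\lambda_0}^q \to \End(V_{\lambda_0})$, and define
$$
\Psi\col \widetilde\U \times \PP(V_{\lambda_0}) \longrightarrow \K_{q^{2k}}(\widetilde\U)
$$
by sending $(r,v)$ to the class of $(r,v)$ in the quotient. Since $\widetilde\pi \circ \Psi$ is the projection onto the first factor, it suffices to show that $\Psi$ is a homeomorphism; then the fiber is $\PP(V_{\lambda_0})\cong \mathbb{CP}^{N^m-1}$ (recalling $V_{\lambda_0}=V_1\otimes \dots \otimes V_m$ with each $\dim V_j=N$).

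For surjectivity, given a class represented by $(r,v')\in \widetilde\U \times \PP(V_{\lambda'}')$, I would complete $\rho_{\lambda_0}$ and $\rho_{\lambda'}'$ arbitrarily into families of $q^{2k}$--standard representations indexed by all ideal triangulations, and use the chosen $N$--th roots to form compatible representations $\rho_r,\rho_r'$ of $\T_S^q$ as in the construction preceding the proposition. By Lemma~\ref{lem:IntertwLocallyDefined}, the projectivized intertwiner $\PP(L_{\lambda_0\lambda'}^{\rho_r\rho_r'})$ depends only on $\rho_{\lambda_0}$ and $\rho_{\lambda'}'$, so the element $v:=\PP(L_{\lambda_0\lambda'}^{\rho_r\rho_r'})(v')$ is well defined and $(r,v)\sim (r,v')$ by the very definition of the equivalence relation. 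For injectivity, if $(r,v)$ and $(r,v')$ both lie in $\widetilde\U \times \PP(V_{\lambda_0})$ and are $\sim$--equivalent, then $v=\PP(L_{\lambda_0\lambda_0}^{\rho_r\rho_r})(v')$; but by the argument of Lemma~\ref{lem:IntertwSameTriang} the intertwiner $L_{\lambda_0\lambda_0}^{\rho_r\rho_r}$ is a nonzero scalar multiple of $\Id_{V_{\lambda_0}}$, so $v=v'$.

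It remains to check continuity of $\Psi$ and $\Psi^{-1}$. Continuity of $\Psi$ is immediate from the quotient topology. For $\Psi^{-1}$, one must verify that on each stratum $\widetilde\U\times \PP(V_{\lambda'}')$ the assignment $(r,v')\mapsto (r,\PP(L_{\lambda_0\lambda'}^{\rho_r\rho_r'})(v'))$ is continuous in $r$. Since the $\rho_{\lambda,r}(X_i)= x_i^{1/N}\rho_\lambda(X_i)$ depend continuously on $r$ by condition~(2) in the definition of $\widetilde\U$, and since the intertwiner for a diagonal exchange can be written explicitly by combining the fusion and polygon cases analyzed in Lemmas~\ref{lem:IntertwinersPolygons}--\ref{lem:IntertwDiagEx}, the intertwiner built along any fixed sequence of diagonal exchanges from $\lambda_0$ to $\lambda'$ has entries that depend continuously on $r$, and hence so does its projectivization. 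I expect this continuity verification to be the main obstacle, because intertwiners are only defined up to a scalar and one has to argue that the scalar ambiguities do not interfere with continuity at the projective level; this is precisely what the explicit $r$--dependence of $\rho_{\lambda,r}$ in the construction preceding the proposition is designed to handle.
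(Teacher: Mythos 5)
Your proposal is correct and takes essentially the same approach as the paper, whose proof is the one-line observation that the restriction of the quotient map to $\widetilde\U \times \PP(V_{\lambda_0})$ is a homeomorphism; you have filled in the surjectivity, injectivity, and continuity verifications that the paper leaves as ``of course.'' Note that your fiber $\mathbb{CP}^{N^m-1}$ corrects an apparent typo in the paper's statement of $\mathbb{CP}^{mN-1}$, since $\dim V_\lambda = N^m$.
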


This may sound topologically uninteresting. 
However, the main point of the construction is that it is independent of any choice of $\lambda$ and $\rho_\lambda$. As a consequence:

\begin{prop}
The action of the mapping class group $\pi_0\, \Diff (S)$ on $\widetilde\U$ canonically lifts to an action on $ \K_{q^{2k}} (\widetilde \U)$. 
\end{prop}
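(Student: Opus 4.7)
My plan is to define the lifted action on representatives, verify that it descends to the quotient via a naturality statement for intertwining operators, and check that it factors through the mapping class group.

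First, observe that a diffeomorphism $\phi\col S \to S$ acts naturally on all the ingredients defining $\K_{q^{2k}}(\widetilde\U)$. It sends an ideal triangulation $\lambda$ to $\phi(\lambda)$, induces an edge-by-edge algebra isomorphism $\T_\lambda^q \to \T_{\phi(\lambda)}^q$ between Chekhov-Fock algebras, and therefore transports a $q^{2k}$-standard local representation $\rho_\lambda\col \T_\lambda^q \to \End(V_\lambda)$ to a $q^{2k}$-standard local representation $\rho_{\phi(\lambda)}\col \T_{\phi(\lambda)}^q \to \End(V_\lambda)$ on the \emph{same} vector space $V_\lambda$. It also acts in the usual way on (conjugacy classes of) enhanced homomorphisms $\pi_1(S) \to \Isom(\HH^3)$ and hence on $\widetilde\U$; write this as $r \mapsto \phi\cdot r$. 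I would then define the lifted action by sending the class of $(r,v) \in \widetilde\U \times \PP(V_\lambda)$ in the chart indexed by $(\lambda, \rho_\lambda)$ to the class of $(\phi \cdot r, v)$ in the chart indexed by $(\phi(\lambda), \rho_{\phi(\lambda)})$.

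The central step, and the main obstacle, is verifying that this map respects the equivalence relation $\sim$. This reduces to a naturality statement for projectivized intertwiners:
$$
\PP\bigl(L_{\phi(\lambda)\phi(\lambda')}^{\phi\cdot\rho_r,\,\phi\cdot\rho_r'}\bigr) = \PP\bigl(L_{\lambda\lambda'}^{\rho_r\rho_r'}\bigr)
$$
under the natural identifications $V_\lambda = V_{\phi(\lambda)}$ and $V_{\lambda'}' = V_{\phi(\lambda')}'$. To establish this I would transport the whole family: define $\widetilde L_{\mu\mu'}^{\sigma\sigma'} := L_{\phi^{-1}(\mu)\phi^{-1}(\mu')}^{\phi^{-1}\cdot\sigma,\,\phi^{-1}\cdot\sigma'}$ via the natural identifications, and verify that $\widetilde L$ still satisfies both the Composition and Fusion Relations of Theorem~\ref{thm:IntertwiningUnique}. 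The Composition Relation transports trivially; the Fusion Relation is inherited from the $\Diff(S)$-equivariance of the splitting-and-refusing construction of \S\ref{subsect:Fusing} (together with the Naturality Relation for the coordinate changes $\Phi_{\lambda\lambda'}^q$). The uniqueness part of Theorem~\ref{thm:IntertwiningUnique} then forces $\widetilde L \dot= L$, which upon projectivization is exactly the desired equality.

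With well-definedness secured, the group law $(\phi\psi)\cdot x = \phi\cdot(\psi\cdot x)$ is immediate from the construction. To conclude that the action factors through $\pi_0\,\Diff(S)$, I would argue that a diffeomorphism $\phi$ isotopic to the identity induces an inner automorphism of $\pi_1(S)$, hence acts trivially on the space $\widetilde\U$ of conjugacy classes; the corresponding map on the fiber is then realized by a projectivized intertwiner which depends only on the (unchanged) local representations at $\lambda$ and at the isotopic $\phi(\lambda)$ by Lemma~\ref{lem:IntertwLocallyDefined}, and is therefore the identity.
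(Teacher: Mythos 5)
Your proposal is correct, and the underlying idea — push $\lambda$, the Chekhov--Fock algebra isomorphism, and the $q^{2k}$-standard representation forward along $\phi$, then act by $\phi_*$ on $\widetilde\U$ — is the same one the paper uses. The paper's own proof, however, is very compressed: it writes down the action formula (expressing the image in a pre-chosen chart for $\phi(\lambda)$ by inserting an intertwiner $L_{\phi(\lambda)\lambda}^{\rho_r'\rho_r}$) and declares ``the action is now clear,'' without explicitly checking that the assignment respects $\sim$ or that isotopic diffeomorphisms act identically. Your write-up supplies exactly those missing verifications. In particular, the device of transporting the entire family of intertwiners by $\phi$, confirming that the transported family still satisfies the Composition and Fusion Relations, and then invoking the uniqueness part of Theorem~\ref{thm:IntertwiningUnique} to force $\widetilde L \dot= L$ is a clean way to obtain the required naturality of the $\PP(L_{\lambda\lambda'}^{\rho_r\rho_r'})$; the paper never articulates this step, though it is implicitly what makes the construction canonical. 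One point worth spelling out more carefully in your argument: the Fusion Relation for $\widetilde L$ is not literally ``transported trivially,'' since $\phi$ is an automorphism of $S$ only, not of the split surface $S_0$. What actually happens is that if $f\col S_0\to S$ is the fusion map, then $\phi^{-1}\circ f$ is a different (but valid) fusion map $S_0\to S$, and applying the Fusion Relation for the original $L$ to \emph{this} twisted fusion is what yields $\widetilde L_{\lambda\lambda'}^{\rho\rho'} \dot= \widetilde L_{\lambda_0\lambda_0'}^{\rho_0\rho_0'}$; the same twist appears when $S$ itself plays the role of the unfused surface. Finally, your descent to $\pi_0\,\Diff(S)$ is fine once one notes that for $\phi$ isotopic to the identity the induced edge-correspondence $\Psi_{\phi,\lambda}$ is the identity (since distinct edges of an ideal triangulation are non-isotopic), so the transported chart literally coincides with the original and Lemma~\ref{lem:IntertwLocallyDefined} gives the identity intertwiner.
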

 
 \begin{proof}
A diffeomorphism $\phi\col S \to S$ provides an identification $\Psi_{\phi, \lambda} \col \T_\lambda^q \to \T_{\phi(\lambda)}^q$ of the Chekhov-Fock algebras of $\lambda$ and $\phi(\lambda)$. 
Consequently, a $q^{2k}$--standard local representation  $\rho_\lambda\col \T_\lambda^q \to \End(V_\lambda)$ gives a $q^{2k}$--standard  local representation  $\rho'_\lambda\col \T_\lambda^q \to \End(V_\lambda')$ with $V_\lambda' = V_{\phi(\lambda)}$ and $\rho'_\lambda = \rho_{\phi(\lambda)} \circ \Psi_{\phi, \lambda} $.

The action is now clear. With the notation of the construction of $ \K_{q^{2k}}(\widetilde \U)$, the action of  $\phi\in \pi_0 \, \Diff(S)$ sends the element of $ \K_{q^{2k}} (\widetilde \U)$ represented by 
$(r,v)\in \widetilde\U \times \PP(V_\lambda)$ to the element represented by 
$$\left(r\circ \phi_*,  \PP( L_{\phi(\lambda)\lambda}^{\rho_r'\rho_r})(v)\right ) \in \widetilde \U \times V_{\phi(\lambda)}.$$ Here $\phi_*$ denotes both the homomorphism $ \pi_1(S) \to \pi_1(S)$ induced by $\phi$ and its action on the possible enhancements. 
\end{proof}

 \begin{thm}
If $\phi\in \pi_0 \, \Diff(S)$ fixes an element $r \in \widetilde\U$, the action of the lift $\K_{q^{2k}} (\widetilde \U) \to \K_{q^{2k}} (\widetilde \U)$ of $\phi$ on the fiber $\pi^{-1}(r)$ is uniquely determined up to projective transformation of this fiber. \qed
\end{thm}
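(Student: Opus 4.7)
The plan is to express the action of $\phi$ on $\pi^{-1}(r)$, in any trivialization of this fiber, as a composition of projectivized intertwining operators between isomorphic local representations of $\T_S^q$, and then to deduce uniqueness from Theorem~\ref{thm:IntertwiningUnique} and Lemma~\ref{lem:IntertwLocallyDefined}.

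Fix an ideal triangulation $\lambda$ of $S$ and a $q^{2k}$--standard local representation $\rho_\lambda \col \T_\lambda^q \to \End(V_\lambda)$; these data trivialize the fiber via the homeomorphism $\Theta_\lambda \col \PP(V_\lambda) \to \pi^{-1}(r)$ sending $v$ to the class of $(r,v)$. Using the explicit formula for the lifted action recorded in the proof of the preceding proposition, together with the hypothesis that $\phi$ fixes the class $r \in \widetilde\U$ (so that $r \circ \phi_*$ is conjugate to $r$ in $\Isom(\HH^3)$), I would unwind the equivalence relation defining $\K_{q^{2k}}(\widetilde\U)$ and check that $\Theta_\lambda^{-1} \circ \phi \circ \Theta_\lambda$ is given by a composition of projectivized intertwiners of the form $\PP(L_{\mu\mu'}^{\rho\rho'})$ associated to the isomorphic local representations $\rho_r$ and $\rho'_r$ of $\T_S^q$, where $\rho'_\nu = \rho_{\phi(\nu)} \circ \Psi_{\phi,\nu}$ for every triangulation $\nu$ and the triangulations $\mu, \mu'$ appearing in the composition are $\lambda$ and $\phi(\lambda)$.

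By Theorem~\ref{thm:IntertwiningUnique} each such intertwiner is canonically defined up to multiplication by a scalar, and by Lemma~\ref{lem:IntertwLocallyDefined} it depends only on the local representations attached to the relevant triangulations, not on their extensions to the full quantum Teichm\"uller space. Scalar ambiguities vanish upon projectivization, so the composite $\Theta_\lambda^{-1} \circ \phi \circ \Theta_\lambda$ is a well-defined projective automorphism of $\PP(V_\lambda)$. If we replace the trivializing data $(\lambda,\rho_\lambda)$ by another pair $(\mu,\sigma_\mu)$, the two identifications $\Theta_\lambda$ and $\Theta_\mu$ of $\pi^{-1}(r)$ differ by a projectivized intertwiner, and the two resulting descriptions of the action on $\PP(V_\lambda)$ and $\PP(V_\mu)$ are therefore conjugate by this projective transformation; hence, as a self-map of the intrinsic fiber $\pi^{-1}(r)$, the action of $\phi$ is uniquely determined up to projective transformation of this fiber. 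The main obstacle is the careful bookkeeping of the several intertwiners, equivalence relations and choices of $q^{2k}$--standard representations that enter the explicit formula; beyond this, the theorem follows essentially formally from Theorem~\ref{thm:IntertwiningUnique}, Lemma~\ref{lem:IntertwLocallyDefined} and the Composition Relation already built into the construction of the bundle.
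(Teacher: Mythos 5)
Your argument is correct and matches the paper's (implicit) reasoning: the paper marks this theorem with \qed precisely because, once the action of $\pi_0\,\Diff(S)$ on $\K_{q^{2k}}(\widetilde\U)$ has been defined via the projectivized intertwiners $\PP(L_{\phi(\lambda)\lambda}^{\rho_r'\rho_r})$, the well-definedness up to projective transformation is exactly the content of Theorem~\ref{thm:IntertwiningUnique} and Lemma~\ref{lem:IntertwLocallyDefined}, plus the observation that changing the trivializing pair $(\lambda,\rho_\lambda)$ conjugates the resulting map by a projectivized intertwiner. You correctly identified these two inputs and the trivialization-independence step, so your proof amounts to writing out the computation the authors left to the reader.
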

 
 If we apply this to a pseudo-Anosov surface diffeomorphism and to the fixed point $r\in \overline {\mathcal{QF}}(S)$ provided by the complete hyperbolic metric of its mapping torus $M_\phi$, this is the analogue of the construction that we used in \cite{BonLiu}, replacing irreducible representations by local representations of the quantum Teichm\"uller space. If we represent this projective transformation by a matrix with determinant 1, its trace is equal modulo a root of unity to the invariant associated by Baseilhac and Benedetti \cite{BasBen05} to the hyperbolic metric of $M_\phi$. See \cite{Bon10}. 
 
Now, assume that,  in addition, the action of the mapping class group $\pi_0 \Diff (S)$ on $\widetilde\U$ is properly discontinuous. For instance, this holds when $\widetilde\U$ is the enhanced Teichm\"uller space $\T(S)$, or the cusped Teichm\"uller space $\T_{\mathrm c}(S)$, or the quasifuchsian space $\mathcal{QF}(S)$. We can then consider the quotient spaces $\U = \widetilde \U/ \pi_0 \,\Diff (S)$ and $\K_{q^{2k}} ( \U) = \K_{q^{2k}} (\widetilde \U) / \pi_0 \,\Diff (S)$. 

\begin{prop}
The map  $\pi\col K_{q^{2k}} ( \U) \to \U$ induced by $\widetilde\pi\col \K_{q^{2k}} (\widetilde \U) \to \widetilde \U$ is an orbifold bundle map, with fiber $\mathbb{CP}^{mN-1}$. \qed
\end{prop}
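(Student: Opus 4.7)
The plan is to verify the defining local model for an orbifold bundle: around each point of $\U$ I must exhibit an orbifold chart of the form $\widetilde V/\Gamma$ over which the total space identifies with a quotient of a trivial $\mathbb{CP}^{mN-1}$-bundle by $\Gamma$ acting through bundle automorphisms. The two essential ingredients are already assembled. The previous proposition makes $\widetilde\pi$ a trivial bundle with fibre $\mathbb{CP}^{mN-1}$, and the preceding construction lifts the action of $\pi_0\,\Diff(S)$ on $\widetilde\U$ to a fibre-preserving action on $\K_{q^{2k}}(\widetilde\U)$.

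First I would choose, for each $r\in\widetilde\U$, a $\Gamma_r$-invariant open neighbourhood $\widetilde V$ of $r$ whose translates by elements of $\pi_0\,\Diff(S)\setminus \Gamma_r$ are disjoint from $\widetilde V$; here $\Gamma_r$ denotes the stabilizer of $r$, which is finite by proper discontinuity, and the existence of such $\widetilde V$ is the standard consequence of that hypothesis. Then $\widetilde V/\Gamma_r$ embeds in $\U$ as an orbifold chart around $[r]$, and the preimage of this chart in $\K_{q^{2k}}(\U)$ is canonically identified with $\widetilde\pi^{-1}(\widetilde V)/\Gamma_r$.

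Next I would fix an ideal triangulation $\lambda$ and a $q^{2k}$-standard representation $\rho_\lambda\col \T_\lambda^q\to \End(V_\lambda)$ so as to trivialize $\widetilde\pi^{-1}(\widetilde V)$ as $\widetilde V\times \mathbb{CP}^{mN-1}$. Because the lifted action of $\phi\in\Gamma_r$ preserves the projection onto $\widetilde V$, in this trivialization it reads
$$(r',v)\longmapsto \bigl(\phi_*(r'),\,A_\phi(r')\cdot v\bigr)$$
for some continuous family $A_\phi\col \widetilde V\to \mathrm{PGL}(mN,\C)$ obtained from the projectivized intertwiners $\mathbb{P}\bigl(L^{\rho'_r\rho_r}_{\phi(\lambda)\lambda}\bigr)$; at $r$ its value recovers the projective transformation of the preceding theorem. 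This is exactly the standard local form of an orbifold bundle with fibre $\mathbb{CP}^{mN-1}$ and local isotropy $\Gamma_r$ acting by bundle automorphisms.

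Finally, I would cover $\U$ by such charts and note that the transition data between two of them is again supplied by projective intertwiners, so that the cocycle condition needed to glue the local models follows from the Composition Relation of Theorem~\ref{thm:IntertwiningUnique} together with the $\pi_0\,\Diff(S)$-equivariance of $\K_{q^{2k}}(\widetilde\U)$. The main obstacle, which is more bookkeeping than substance, is to verify that no identifications beyond those coming from $\Gamma_r$ occur inside $\widetilde\pi^{-1}(\widetilde V)$ after passing to the quotient; this follows directly from the choice of $\widetilde V$ and from the fact that the lifted $\pi_0\,\Diff(S)$-action preserves fibres of $\widetilde\pi$.
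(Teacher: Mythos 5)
Your proposal is correct and is exactly the standard argument that the paper treats as immediate (the proposition is stated with no written proof): proper discontinuity gives slice neighbourhoods $\widetilde V$ with finite stabilizer $\Gamma_r$, the previous proposition trivializes $\widetilde\pi$ over $\widetilde V$, and the lifted mapping-class-group action is fibre-preserving and projective via the intertwiners, so the quotient is an orbifold $\mathbb{CP}^{mN-1}$-bundle. Nothing is missing.
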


This bundle is non-trivial. For instance, there is an obstruction to lift a complex projective bundle to a vector bundle, which lies in the second cohomology group of the base with coefficients in the multiplicative. In the case of $\widetilde\pi\col \K_{q^{2k}} (\widetilde \U) \to \widetilde \U$, this obstruction is explicitly computed in \cite{BaiBon}, where it is also shown that it is non-trivial for $\widetilde\U = \T(S)$, $\T_{\mathrm c}(S)$ or $\mathcal{QF}(S)$.

\end{document}